\newtheorem{theorem}{Theorem}[section]
\newtheorem{proposition}[theorem]{Proposition}
\newtheorem{lemma}[theorem]{Lemma}
\newtheorem*{claim*}{Claim}
\newtheorem{corollary}[theorem]{Corollary}
\newtheorem{Main Conjecture}[theorem]{Main Conjecture}
\newtheorem{conjecture}[theorem]{Conjecture}
\newtheorem{deflemma}[theorem]{Definition-Lemma}
\theoremstyle{remark}
\newtheorem{definition}[theorem]{Definition}
\newtheorem{example}[theorem]{Example}
\newtheorem{remark}[theorem]{Remark}
\theoremstyle{plain}
\newtheorem{question}{Question}
\newcommand\BKdeg{{\operatorname{BK-deg}}}
\newcommand\Span{{\operatorname{Span}}}
\newcommand\codim{{\operatorname{codim}}}
\newcommand\mult{{\operatorname{mult}}}
\newcommand\Hom{{\operatorname {Hom}}}
\newcommand\Gr{{\operatorname {Gr}}}
\newcommand\SL{{\operatorname {SL}}}
\newcommand\Sp{{\operatorname {Sp}}}
\newcommand\GL{{\operatorname {GL}}}
\newcommand\Lie{{\operatorname {Lie}}}
\renewcommand\sp{{\mathfrak{sp}}}
\renewcommand{\lg}{\mathfrak{g}}
\newcommand\diag{{\operatorname{diag}}}
\newcommand\EH{{\operatorname{EH}}}
\newcommand\Pic{{\operatorname{Pic}}}
\newcommand\Ho{{\operatorname{H}}}
\newcommand\nls{{\operatorname{NL-semigroup}}}
\newcommand\lrs{{\operatorname{LR-semigroup}}}
\newcommand\sps{{\sp\operatorname{-semigroup}}}
\newcommand\lgs{{\lg\operatorname{-semigroup}}}
\newcommand\nlss{{\operatorname{NL-sat}}}
\newcommand\lrss{{\operatorname{LR-sat}}}
\newcommand\spss{{\sp\operatorname{-sat}}}
\newcommand\lgss{{\lg\operatorname{-sat}}}
\newcommand\gits{{\operatorname{GIT-semigroup}}}
\newcommand\gitss{{\operatorname{GIT-sat}}}
\newcommand\Par{{\operatorname{Par}}}
\newcommand\Supp{{\operatorname{Supp}}}
\newcommand\inv{{^{-1}}}
\newcommand\Li{{\mathcal L}}
\newcommand\Schub{\operatorname{Schub}}
\newcommand\lu{{\mathfrak u}}\newcommand\lt{{\mathfrak t}}
\newcommand\lb{{\mathfrak b}}
\newcommand\ZZ{{\mathbb Z}}\newcommand\RR{{\mathbb R}}
\newcommand\CC{\mathbb C}\newcommand\NN{\mathbb N}
\newcommand\QQ{\mathbb Q}
\newcommand\longto{\longrightarrow}
\newcommand\Part{{\mathcal
    S}}
\newcommand{\cellsize}{11}
\newlength{\cellsz} \setlength{\cellsz}{\cellsize\unitlength}
\newsavebox{\cell}
\sbox{\cell}{\begin{picture}(\cellsize,\cellsize)
\put(0,0){\line(1,0){\cellsize}}
\put(0,0){\line(0,1){\cellsize}}
\put(\cellsize,0){\line(0,1){\cellsize}}
\put(0,\cellsize){\line(1,0){\cellsize}}
\end{picture}}
\newcommand\cellify[1]{\def\thearg{#1}\def\nothing{}%
\ifx\thearg\nothing
\vrule width0pt height\cellsz depth0pt\else
\hbox to 0pt{\usebox{\cell} \hss}\fi%
\vbox to \cellsz{
\vss
\hbox to \cellsz{\hss$#1$\hss}
\vss}}
\newcommand\tableau[1]{\vtop{\let\\\cr
\baselineskip -16000pt \lineskiplimit 16000pt \lineskip 0pt
\ialign{&\cellify{##}\cr#1\crcr}}}
\newcommand{\excise}[1]{}
\newcommand{\scal}[1]{\langle #1 \rangle}
\renewcommand{\det}{\mathrm{det}}
\renewcommand{\lg}{\mathfrak{g}}
\title{Newell-Littlewood numbers III: eigencones and GIT-semigroups}
\author{Shiliang Gao}
\address{Dept.~of Mathematics, University of Illinois at Urbana-Champaign, Urbana, IL 61801}
\email{sgao23@illinois.edu}
\author{Gidon Orelowitz}
\address{Dept.~of Mathematics, University of Illinois at Urbana-Champaign, Urbana, IL 61801}
\email{gidono2@illinois.edu}
\author{Nicolas Ressayre}
\address{Institut Camille Jordan (ICJ), UMR CNRS 5208, Universit\'e Claude Bernard Lyon I, 43 boulevard du 11 novembre 1918, F - 69622 Villeurbanne cedex}
\email{ressayre@math.univ-lyon1.fr}
\author{Alexander Yong}
\address{Dept.~of Mathematics, University of Illinois at Urbana-Champaign, Urbana, IL 61801}
\email{ayong@illinois.edu}
\date{June 21, 2022}
\begin{document}
\pagestyle{plain}
\begin{abstract}
The \emph{Newell-Littlewood numbers} are tensor 
product multiplicities of Weyl modules for the classical groups in the
stable range.  Littlewood-Richardson
coefficients form a special case. Klyachko connected eigenvalues of sums of Hermitian matrices to the saturated LR-cone and established
defining linear inequalities. We prove analogues for the saturated NL-cone:
\begin{itemize}
\item  a 
description by \emph{Extended Horn inequalities} (as conjectured in part II of this series): using a result of King, 
this description is controlled by the saturated LR-cone and thereby recursive, just like the Horn inequalities;
\item a minimal list of 
defining linear inequalities;
\item an eigenvalue interpretation; and
\item a factorization of NL-numbers, on the boundary.
\end{itemize}
\end{abstract}

\maketitle
\section{Introduction}\label{sec:intro}
Fix $n\in \NN:=\{1,2,3,\ldots\}$.
This is the third installment in a series \cite{GOY20a,GOY20b} about the \emph{Newell-Littlewood numbers} \cite{Newell, Littlewood} 
\begin{equation}
\label{eqn:Newell-Littlewood}
N_{\lambda,\mu,\nu}=\sum_{\alpha,\beta,\gamma} c_{\alpha,\beta}^{\lambda}
c_{\beta,\gamma}^{\mu}c_{\gamma,\alpha}^{\nu};
\end{equation}
the indices are partitions 
in 
${\Par}_n=\{(\lambda_1, \lambda_2, \ldots, \lambda_n)\in {\mathbb Z}_{\geq 0}^n:
\lambda_1\geq \lambda_2\geq \cdots \geq \lambda_n\}$.
In \eqref{eqn:Newell-Littlewood}, $c^{\lambda}_{\alpha,\beta}$ is the  
\emph{Littlewood-Richardson coefficient}.
The Littlewood-Richardson coefficients are themselves Newell-Littlewood numbers:
 if $|\nu|=|\lambda|+|\mu|$ then
$N_{\lambda,\mu,\nu}=c_{\lambda\mu}^\nu$.
The goal of this series is to establish analogues of results known for  
Littlewood-Richardson
coefficients. This paper proves NL-generalizations of breakthrough results of
Klyachko \cite{Klyachko}.

The paper \cite{GOY20a} investigated
$$
\nls(n)=\{(\lambda,\mu,\nu)\in(\Par_n)^3\,:\, N_{\lambda,\mu,\nu}> 0\}.
$$
Indeed, $\nls$ is a finitely generated semigroup \cite[Section~5.2]{GOY20a}. A good
approximation of it is the saturated semigroup:
$$
\nlss(n)=\{(\lambda,\mu,\nu)\in (\Par_n^\QQ)^3\,:\, \exists t>0 \quad N_{t\lambda,t\mu,t\nu}\neq 0\},
$$
where
$\Par_n^\QQ=\{(\lambda_1,\ldots,\lambda_n)\in\QQ^n\,:\,\lambda_1\geq\cdots\geq\lambda_n\geq
0\}$. Our main results give descriptions of $\nlss(n)$, 
including with a minimal list of defining linear inequalities.

Fix $m\in \NN$ and consider the symplectic Lie
algebra $\sp(2m,\CC)$. The irreducible $\sp(2m,\CC)$-representations $V(\lambda)$
are parametrized by their highest weight $\lambda\in\Par_m$ (see Section~\ref{sec:rootC} for details). The tensor product multiplicities
 $\mult^m_{\lambda,\mu,\nu}$ are defined by
$$
V(\lambda)\otimes V(\mu)=\sum_{\nu\in\Par_m} V(\nu)^{\oplus \mult^m_{\lambda,\mu,\nu}}.
$$ 
Since $\sp(2m,\CC)$-representations are self-dual, $\mult^m_{\lambda,\mu,\nu}$ is
symmetric in its inputs.
The supports of these multiplicities (and more generally when
$\sp(2m,\CC)$ is replaced by any semisimple Lie algebra) are of significant
interest (see, \emph{e.g.}, the survey \cite{Kumar:surveyEMS} and the references therein). Consider the finitely generated semigroup
$$
\sps(m)=\{(\lambda,\mu,\nu)\in(\Par_m)^3\,:\, \mult^m_{\lambda,\mu,\nu}> 0\},
$$
and the cone generated by it
$$
\spss(m)=\{(\lambda,\mu,\nu)\in (\Par_m^\QQ)^3\,:\, \exists t>0 \quad \mult^m_{t\lambda,t\mu,t\nu}> 0\}.
$$

For $m\geq n$, by postpending $0$'s, $\Par_n$ embeds into $\Par_m$. 
Newell-Littlewood numbers are tensor
product multiplicities for $\sp(2m,\CC)$ in the \emph{stable range} \cite[Corollary~2.5.3]{Koike}:
\begin{equation}
\forall (\lambda,\mu,\nu)\in(\Par_n)^3\quad \mbox{if $m\geq 2n$ then }
\mult^m_{\lambda,\mu,\nu}=N_{\lambda,\mu,\nu}.
\label{eq:N=m}
\end{equation}
Now, \eqref{eq:N=m} immediately implies
\begin{equation}
\label{eqn:firstobs}
\nlss(n)=\spss(m)\cap(\Par_n^\QQ)^3, \text{\ for any $m\geq 2n$.}
\end{equation}
Our first result says the relationship of $\nlss$ to $\spss$ is \emph{independent} of the stable range. 
\begin{theorem}
  \label{th:cones=}
For any $m\geq n\geq 1$,
$$
\nlss(n)=\spss(m)\cap(\Par_n^\QQ)^3.
$$
\end{theorem}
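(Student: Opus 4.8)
The plan is to bootstrap from the stable-range identity \eqref{eqn:firstobs} by an induction on $m$. First reduce: since $N_{\lambda,\mu,\nu}$ does not depend on the ambient number of parts one chooses to write the partitions in, we have $\nlss(n)=\nlss(m)\cap(\Par_n^\QQ)^3$ for every $m\geq n$. Hence Theorem~\ref{th:cones=} is equivalent to the single family of identities $\spss(m)=\nlss(m)$, $m\geq1$ (apply the case ``$n=m$'' and intersect with $(\Par_n^\QQ)^3$). Because $\spss(m)\subseteq(\Par_m^\QQ)^3$ and $(\Par_m^\QQ)^3\subseteq(\Par_{m+1}^\QQ)^3\subseteq\cdots$, while \eqref{eqn:firstobs} gives $\nlss(m)=\spss(M)\cap(\Par_m^\QQ)^3$ for all $M\geq 2m$, the whole theorem follows once we establish the one-step stabilization
\begin{equation}\label{eqn:keystep}
\spss(m)=\spss(m+1)\cap(\Par_m^\QQ)^3\qquad\text{for every }m\geq1 .
\end{equation}

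The inclusion $\spss(m)\subseteq\spss(m+1)$ (whose left side automatically lands in $(\Par_m^\QQ)^3$) I would obtain from monotonicity of symplectic tensor product multiplicities under the block embedding $\Sp(2m,\CC)\hookrightarrow\Sp(2m+2,\CC)$: for $\ell(\lambda),\ell(\mu),\ell(\nu)\leq m$ one has $\mult^m_{\lambda,\mu,\nu}\leq\mult^{m+1}_{\lambda,\mu,\nu}$, so positivity of $\mult^m$ along a ray forces positivity of $\mult^{m+1}$ along that same ray. This monotonicity can be read off from the $\Sp$-branching rule (each $V(\kappa)$ of $\Sp(2m+2,\CC)$, $\kappa\in\Par_m$, restricts to $\Sp(2m,\CC)$ with $V(\kappa)$ as a constituent) together with the Koike--Terada universal-character calculus and the modification rule; alternatively it follows from the eigenvalue picture, since a solution in $\sp(2m,\CC)$ of the ``$A_1+A_2+A_3=0$'' problem with prescribed spectra becomes, after padding the extra $2\times2$ block by zeros, a solution in $\sp(2m+2,\CC)$.

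The real content is the reverse inclusion $\spss(m+1)\cap(\Par_m^\QQ)^3\subseteq\spss(m)$: a triple of partitions with at most $m$ parts that becomes a genuine $\Sp(2m+2,\CC)$ tensor configuration must already be one, up to dilation, for $\Sp(2m,\CC)$. I would argue with GIT. Present $\spss(k)$ as the GIT-semigroup for the diagonal action of $\Sp(2k,\CC)$ on $(\Sp(2k,\CC)/B)^3$ linearized by $(\lambda,\mu,\nu)$, using self-duality of symplectic modules to identify $\mult^k_{\lambda,\mu,\nu}$ with the dimension of the space of invariant sections (equivalently, pass to the $\sp(2k,\CC)$-eigencone). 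A point of $\spss(m+1)\cap(\Par_m^\QQ)^3$ is then a semistable configuration for $\Sp(2m+2,\CC)$ whose associated weights are supported in the first $m$ coordinates, and one must manufacture a semistable configuration for $\Sp(2m,\CC)$ for the same (dilated) weights. The main obstacle is that $\Sp(2m,\CC)\hookrightarrow\Sp(2m+2,\CC)$ is \emph{not} a Levi embedding, so there is no functorial passage between the two GIT problems; instead one works directly with the Hilbert--Mumford criterion, arguing that every one-parameter subgroup of $\Sp(2m,\CC)$ that could destabilize the rank-$m$ configuration is dominated by one of $\Sp(2m+2,\CC)$ destabilizing the rank-$(m+1)$ configuration, whence the hypothesis excludes it. Concretely this amounts to checking that the list of defining linear inequalities of $\spss(m+1)$ (Belkale--Kumar/Ressayre-type, indexed by Schubert data on isotropic Grassmannians of $\CC^{2m+2}$), once restricted to the slice $\{\lambda_{m+1}=\mu_{m+1}=\nu_{m+1}=0\}$, is implied by the corresponding list for $\spss(m)$ on $\CC^{2m}$: a ``stability under rank extension'' of the Horn-type recursion. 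I expect this inequality bookkeeping --- tracking how the relevant isotropic Schubert classes of $\CC^{2m+2}$ specialize to those of $\CC^{2m}$ on the slice --- to be the delicate step, and the point where the combinatorics of the defining inequalities developed in the earlier parts of the series is most likely to be needed.
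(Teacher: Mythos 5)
Your forward inclusion $\spss(m)\subseteq\spss(m+1)$ via the eigencone (pad a solution of $M_1+M_2+M_3=0$ in $\sp(2m,\CC)\cap\lu(2m,\CC)$ by a zero $2\times 2$ block) is exactly the paper's argument, using Theorem~\ref{th:KN}; your alternative argument via the $\Sp$-branching rule is not as straightforward as stated, since both the ambient tensor product and the target module change under restriction, but this is moot given the eigencone route works. The inductive reduction of the full statement to the one-step identity $\spss(m)=\spss(m+1)\cap(\Par_m^\QQ)^3$ is logically valid (iterate and invoke \eqref{eqn:firstobs}), though it buys nothing in practice: the paper handles all $m\geq n$ at once with the same amount of work.

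The genuine gap is that you do not prove the reverse inclusion --- you only sketch a Hilbert--Mumford strategy and explicitly flag the decisive step (``inequality bookkeeping ... is the delicate step'') as unresolved. Also note the direction of implication in your last paragraph is stated backwards: to show $\spss(m+1)\cap(\Par_m^\QQ)^3\subseteq\spss(m)$ one needs the defining inequalities of $\spss(m)$ to be \emph{consequences} of those of $\spss(m+1)$ restricted to the slice, not the other way around. The paper closes this gap concretely. Having first proved that the Belkale--Kumar list of inequalities \eqref{Ineq:ABC} indexed by triples $(I,J,K)\in\Schub(\Gr_\omega(r,2n))^3$ cuts out $\spss(n)$ (Proposition~\ref{prop:Jul2aaa}), it identifies $\spss(m)\cap(\Par_n^\QQ)^3$ with $\gitss(\Sp(2m,\CC),(\Sp(2m,\CC)/P_0)^3)$ via the parabolic $P_0$ of \eqref{eq:defP0}, then lifts each $(I,J,K)$ to an explicit triple $(\tilde I,\tilde J,\tilde K)\in\Schub^{P_0}(\Gr_\omega(r,2m))^3$ and verifies, using \cite[Theorem~8.2, Proposition~8.1]{algo} and a Levi-movability computation in $\Gr_\omega(r,2r)$, that the lifted triple defines a dominant pair. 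Proposition~\ref{prop:coneineqplus} then shows \eqref{Ineq:ABC} holds on the truncated GIT-cone. This cohomological verification is not routine bookkeeping; it is precisely the technical content you defer, and without it the proposal does not constitute a proof.
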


Theorem~\ref{th:cones=} has a number of consequences.
Define
$$
\lrss(n)=\{(\lambda,\mu,\nu)\in (\Par_n^\QQ)^3\,:\, \exists t>0 \quad c_{t\lambda,t\mu}^{t\nu}> 0\}.
$$
Klyachko \cite{Klyachko} 
showed that $\lrss(n)$ describes the possible eigenvalues $\lambda,\mu,\nu$ of two 
$n\times n$ Hermitian matrices $A,B,C$ (respectively) such that $A+B=C$.
Similarly, Theorem~\ref{th:cones=} 
shows that $\nlss(n)$ describes solutions to a more general eigenvalue problem;
see Section~\ref{sec:eigencone} and Proposition~\ref{prop:eigenvaluedesc}.\footnote{We remark that since $(\lambda,\mu,\nu)\in \lrss(n)$ is also in $\nlss(n)$ if and only if $|\lambda|+|\mu|=|\nu|$ (see \cite[Lemma~2.2(II)]{GOY20a}), LR-sat$(n)$ is a facet of the ${\mathfrak{sp}}_{2n}$-sat(n).}

Another major accomplishment of \cite{Klyachko} was the first proved description of $\lrss(n)$
\emph{via} linear inequalities. We have three such descriptions of
$\nlss(n)$. We now state the first of these. Set $[n]=\{1,\dots,n\}$.
For $A\subset [n]$ and $\lambda\in\Par_n$, let $\lambda_A$ be
the partition using the only parts indexed by $A$; namely,
if $A=\{i_1<\cdots<i_r\}$ then
$\lambda_A=(\lambda_{i_1},\dots,\lambda_{i_r})$.
In particular, $|\lambda_A|=\sum_{i\in A}\lambda_i$.
Using the known descriptions of $\spss(n)$ \cite{BK,GITEigen,algo}
we deduce from Theorem~\ref{th:cones=} a \emph{minimal}
list of inequalities defining $\nlss(n)$:

\begin{theorem}
  \label{th:minlistineq}
Let $(\lambda,\mu,\nu)\in(\Par_n)^3$. Then $(\lambda,\mu,\nu)\in
\nlss(n)$ if and only if
\begin{equation}
\label{Ineq:ABC}
0\leq |\lambda_A|-|\lambda_{A'}|+|\mu_B|-|\mu_{B'}|+|\nu_C|-|\nu_{C'}|
\end{equation}  
for any subsets $A,A',B,B',C,C'\subset [n]$ such that
\begin{enumerate} 
\item
$A \cap A'= B \cap B' = C \cap C' = \emptyset$;
\item \label{cond:sumr} 
$|A|+|A'|=|B|+|B'|=|C|+|C'|=|A'|+|B'|+|C'|=:r$;
\item \label{cond:c1}
$c_{\tau^0(A,A')^{\vee[(2n-2r)^r]}\;
    \tau^0(B,B')^{\vee[(2n-2r)^r]}}^{\tau^0(C,C')}=
c_{\tau^2(A,A')^{\vee[r^r]}\; \tau^2(B,B')^{\vee[r^r]}}^{\tau^2(C,C')}=1$.
\end{enumerate}
Moreover, this list of inequalities is irredundant.
\end{theorem}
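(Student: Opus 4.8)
The plan is to reduce to the known description of the symplectic eigencone and then make it combinatorially explicit. First I would apply Theorem~\ref{th:cones=} with $m=n$: since $\spss(n)\subseteq(\Par_n^\QQ)^3$ by definition, this gives $\nlss(n)=\spss(n)$, so it suffices to produce an irredundant list of linear inequalities for the $\sp(2n,\CC)$-eigencone. For this I would invoke the Belkale--Kumar description of the eigencone together with Ressayre's irredundancy theorem \cite{BK,GITEigen,algo}: $\spss(n)$ is cut out by exactly one inequality for each pair consisting of a maximal parabolic $P\subseteq\Sp(2n,\CC)$ and a triple $(\sigma_1,\sigma_2,\sigma_3)$ of Schubert classes of $\Sp(2n,\CC)/P$ such that $\sigma_1\odot_0\sigma_2\odot_0\sigma_3$ equals the point class in the Belkale--Kumar deformed cohomology ring, the inequality being the corresponding Ressayre pairing with $(\lambda,\mu,\nu)$; and this family is irredundant. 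Everything else is a translation of this cohomological data into the statement.

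The maximal parabolics of $\Sp(2n,\CC)$ are $P_1,\dots,P_n$, and $\Sp(2n,\CC)/P_r$ is the isotropic Grassmannian $IG(r,2n)$. Its Schubert classes are indexed by pairs $(A,A')$ of disjoint subsets of $[n]$ with $|A|+|A'|=r$: an isotropic coordinate $r$-plane is a size-$r$ subset $S\subseteq[2n]$ with no antipodal pair, and $A,A'$ record the elements of $S$ that are $\le n$ and the antipodes of those that are $>n$. This accounts for condition~(1) and for the equalities $|A|+|A'|=|B|+|B'|=|C|+|C'|=:r$ in condition~(2) (the three classes must lie in the same $IG(r,2n)$). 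I would then evaluate $\odot_0$ on $H^*(IG(r,2n))$ using the Belkale--Kumar recipe and the Schubert calculus of isotropic Grassmannians; the Levi of $P_r$ is $\GL_r\times\Sp(2n-2r,\CC)$. The outcome should be that a triple $((A,A'),(B,B'),(C,C'))$ is deformed--point if and only if (i) the relevant codimensions add up, which works out to the remaining relation $|A'|+|B'|+|C'|=r$ in condition~(2), and (ii) two auxiliary ordinary-Grassmannian Schubert products are point classes. To make (ii) precise one extracts from each pair $(A,A')$ two partitions $\tau^0(A,A')$ and $\tau^2(A,A')$, checks that they fit in the rectangles $(2n-2r)^r$ and $r^r$ respectively, and identifies these two conditions with
\begin{align*}
&c_{\tau^0(A,A')^{\vee[(2n-2r)^r]}\; \tau^0(B,B')^{\vee[(2n-2r)^r]}}^{\tau^0(C,C')}=1 \quad\text{in }H^*(\Gr(r,2n-r)),\\
&c_{\tau^2(A,A')^{\vee[r^r]}\; \tau^2(B,B')^{\vee[r^r]}}^{\tau^2(C,C')}=1 \quad\text{in }H^*(\Gr(r,2r)),
\end{align*}
which is condition~(3).

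Finally I would identify the Belkale--Kumar/Ressayre inequality attached to $P_r$ and the triple $((A,A'),(B,B'),(C,C'))$ with \eqref{Ineq:ABC}: the pairing of $\lambda$ against the Schubert datum on $IG(r,2n)$ contributes $\sum_{i\in A}\lambda_i-\sum_{i\in A'}\lambda_i=|\lambda_A|-|\lambda_{A'}|$, the minus signs reflecting that an isotropic Schubert condition constrains a flag together with its symplectic-perpendicular flag, and symmetrically for $\mu$ and $\nu$, which is exactly \eqref{Ineq:ABC}. Since $(A,A')\mapsto$ (Schubert class of $IG(r,2n)$) is a bijection and the rule for inequalities is the Ressayre one, the list in the statement is term-for-term the Belkale--Kumar--Ressayre list, so its irredundancy is inherited from \cite{GITEigen}.

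The main obstacle is step (ii): the explicit evaluation of the deformed product on $H^*(IG(r,2n))$ when $1<r<n$. There $IG(r,2n)$ is neither minuscule nor cominuscule, so Levi-movability is a genuine restriction---unlike the Lagrangian case $r=n$, where $\odot_0$ coincides with the ordinary product---and showing that the deformed triple product being a point decouples into conditions~(2) and~(3) forces one to define the maps $(A,A')\mapsto\tau^0(A,A'),\tau^2(A,A')$ so that the rectangle complements, the codimension counts, and the Littlewood--Richardson conditions all match up simultaneously. A secondary point---that distinct data never give the same half-space and no datum gives a vacuous inequality---then follows automatically from the irredundancy of the Belkale--Kumar--Ressayre family.
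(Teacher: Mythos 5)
Your proposal is correct and follows essentially the same route as the paper: reduce to $\spss(n)$ via Theorem~\ref{th:cones=}, then apply the Belkale--Kumar/Ressayre description (Theorem~\ref{th:BK}) of the eigencone over the maximal parabolics of $\Sp(2n,\CC)$, and finally convert the Levi-movability condition on $\Ho^*(\Gr_\omega(r,2n))$ into the codimension count $|A'|+|B'|+|C'|=r$ plus the two $\tau^0/\tau^2$ Littlewood--Richardson conditions. The step you flag as the main obstacle --- decoupling the deformed triple point condition into conditions~(2) and~(3) --- is exactly what the paper imports as a black box from \cite[Theorem~19]{algo} rather than reproving; this is stated as Proposition~\ref{prop:Jul2aaa}, and the paper's ordering (prove the $\spss(n)$ description first, then Theorem~\ref{th:cones=}, then deduce Theorem~\ref{th:minlistineq}) is a cosmetic difference from yours.
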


The definition of the
partitions occurring in condition~\eqref{cond:c1} is in
Section~\ref{sec:SchubSp}.

The proofs of Theorems~\ref{th:cones=} and \ref{th:minlistineq} use
ideas of P.~Belkale-S.~Kumar \cite{BK} on their deformation of the cup product on flag manifolds, as well as the third author's work on
GIT-semigroups/cones \cite{GITEigen, algo}. We interpret $\nlss(n)$
from the latter perspective in Section~\ref{sec:truncated}
(see Proposition~\ref{prop:ineqP0}) by study of the \emph{truncated tensor
cone}. 
Our argument requires us to generalize \cite[Theorem~28]{BK} \cite[Theorem~B]{GITEigen}
(recapitulated here as Theorem~\ref{th:BK}); see Theorem~\ref{th:BKP}.
As an application, we obtain Theorem~\ref{th:reduction} below, which is 
a factorization of the
NL-coefficients on the boundary of $\nlss(n)$.
Let $\lambda\in\Par_n$ and $A,A'\subset[n]$.
Write $A'=\{i'_1<\cdots<i'_s\}$ and $A=\{i_1<\cdots<i_t\}$ and set
\[
\lambda_{A,A'}=(\lambda_{i'_1},\dots,\lambda_{i'_s},-\lambda_{i_t},\dots,-\lambda_{i_1})
\text{\ \ and \ $
\lambda^{A,A'}=\lambda_{[n]-(A\cup A')}$, \emph{etc}.}\]

\begin{theorem}
\label{th:reduction}
  Let $A,A',B,B',C,C'\subset [n]$ such that
\begin{enumerate} 
\item
$A \cap A'= B \cap B' = C \cap C' = \emptyset$;
\item $|A|+|A'|=|B|+|B'|=|C|+|C'|=|A'|+|B'|+|C'|=:r$;
\item 
$c_{\tau^0(A,A')^{\vee[(2n-2r)^r]}\;
    \tau^0(B,B')^{\vee[(2n-2r)^r]}}^{\tau^0(C,C')}=
c_{\tau^2(A,A')^{\vee[r^r]}\; \tau^2(B,B')^{\vee[r^r]}}^{\tau^2(C,C')}=1$,
\end{enumerate}
as in Theorem~\ref{th:minlistineq}.
For $(\lambda,\mu,\nu)\in(\Par_n)^3$ such that
\begin{equation}
\label{eq:ABC}
0=|\lambda_A|-|\lambda_{A'}|+|\mu_B|-|\mu_{B'}|+|\nu_C|-|\nu_{C'}|,
\end{equation}  
\begin{equation}
  \label{eq:NLred}
  N_{\lambda,\mu,\nu}=c_{\lambda_{A,A'},\mu_{B,B'}}^{\nu_{C,C'}^*} N_{\lambda^{A,A'},\mu^{B,B'},\nu^{C,C'}}.
\end{equation}
\end{theorem}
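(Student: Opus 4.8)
The plan is to derive Theorem~\ref{th:reduction} from the GIT/Belkale--Kumar machinery by recognizing the NL-number $N_{\lambda,\mu,\nu}$ as a tensor-product multiplicity for $\sp(2m,\CC)$ with $m$ large (the stable range, \eqref{eq:N=m}), or better yet, working directly with the \emph{truncated tensor cone} developed in Section~\ref{sec:truncated}. First I would recast the data $(A,A'),(B,B'),(C,C')$ as Schubert classes on an appropriate (partial) flag variety $X=G/P$ for $G=\Sp(2n)$ or a related group, so that the numerical conditions (2) and (3) are exactly the statement that the corresponding triple of Schubert classes has a Belkale--Kumar product equal to a single point, i.e.\ the equality \eqref{eq:ABC} cuts out a \emph{regular face} $\face$ of the cone $\spss$ (hence of $\nlss(n)$ by Theorem~\ref{th:cones=}). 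The condition $c_{\tau^0(A,A')^{\vee}\,\tau^0(B,B')^{\vee}}^{\tau^0(C,C')}=c_{\tau^2(A,A')^{\vee}\,\tau^2(B,B')^{\vee}}^{\tau^2(C,C')}=1$ is precisely the ``both structure constants equal $1$'' hypothesis that makes the face well-adapted and lets one apply a Levi-movement/transversality argument.

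The second step is to invoke the generalization of \cite[Theorem~28]{BK}, \cite[Theorem~B]{GITEigen} that the paper promises as Theorem~\ref{th:BKP}. That result should say: on such a regular face, the multiplicity (here $N_{\lambda,\mu,\nu}$, via its interpretation as a dimension of invariants / sections of a line bundle on a GIT quotient) \emph{factors} as a product of two multiplicities — one coming from the ``Levi part'' indexed by the complementary index sets $[n]-(A\cup A')$, etc., and one coming from the ``unipotent/Schubert part'' indexed by the chosen subsets, which in rank terms is a Littlewood--Richardson coefficient for $\GL_r$. Concretely, the Levi-part factor is $N_{\lambda^{A,A'},\mu^{B,B'},\nu^{C,C'}}$ (the NL-number on the parts \emph{not} selected), and the Schubert-part factor is the single LR-coefficient $c_{\lambda_{A,A'},\mu_{B,B'}}^{\nu_{C,C'}^*}$, where the peculiar signed/concatenated partitions $\lambda_{A,A'}=(\lambda_{i'_1},\dots,\lambda_{i'_s},-\lambda_{i_t},\dots,-\lambda_{i_1})$ arise because a negative part corresponds to selecting from $A$ (a ``descent on the other side'') versus a positive part for $A'$ — this is the standard bookkeeping turning symplectic/$\GL_{2?}$ weight data on a face into a $\GL_r$ LR problem. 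I would set this up so that \eqref{eq:NLred} is literally the output of Theorem~\ref{th:BKP} specialized to the symplectic setting and then transported to NL-numbers by \eqref{eq:N=m}/Theorem~\ref{th:cones=}.

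The key intermediate steps, in order, are: (i) translate the $2n-2r$ and $r$ exponents in $\tau^0$ and $\tau^2$ into the statement that the face is defined by a pair of Schubert conditions on the two ``halves'' of the isotropic flag (one genuinely symplectic/orthogonal-type with ambient dimension $2n-2r$, one of type $A$ with ambient dimension $r$), matching the two structure constants; (ii) verify the face is a \emph{regular} face in the sense of \cite{GITEigen} — i.e.\ that the associated one-parameter subgroup / character pairs satisfy the genericity needed for Theorem~\ref{th:BKP}, which is where the $c=1$ hypotheses are used; (iii) apply Theorem~\ref{th:BKP} to get the factorization of the relevant invariant-space dimension; (iv) identify each factor with the claimed NL-number and LR-coefficient by unwinding the definition \eqref{eqn:Newell-Littlewood} and the $\lambda_{A,A'}$/$\lambda^{A,A'}$ notation, being careful with the dualization $\nu_{C,C'}^*$ (reversal/negation) and with signs; (v) finally pass from $\sp(2m,\CC)$ back to $N_{\lambda,\mu,\nu}$.

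The main obstacle I expect is step (iv) combined with the bookkeeping in step (i): making the dictionary between the Schubert/$\tau^i$ combinatorics and the signed partitions $\lambda_{A,A'}$ completely precise, so that the abstract GIT factorization of Theorem~\ref{th:BKP} really does specialize \emph{on the nose} to \eqref{eq:NLred}, with the correct roles of $A$ versus $A'$ (which becomes the positive vs.\ negative parts), the correct target $\nu_{C,C'}^*$, and no stray factors. A secondary obstacle is confirming that the two separate rank-$1$ structure-constant conditions (the $2n-2r$ one and the $r$ one) together — rather than a single cohomological condition, as in the classical $\GL_n$ Horn story — are exactly what is needed to land on a regular face of the symplectic eigencone; this is the genuinely new input over \cite{BK,GITEigen} and is presumably the content of Theorem~\ref{th:BKP}. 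Once that theorem is in hand, the rest is a (careful) specialization.
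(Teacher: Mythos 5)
Your overall plan is in the same spirit as the paper's proof — recast the inequality data as Schubert classes on a symplectic flag variety, recognize that condition (3) makes the inequality a facet coming from a well-covering pair, and then factor the multiplicity along the regular face — but there is a genuine misattribution that amounts to a gap. You assert that Theorem~\ref{th:BKP} is the result that provides the factorization of the multiplicity on a regular face. It is not. Theorem~\ref{th:BKP} (like its antecedents \cite[Theorem~28]{BK} and \cite[Theorem~B]{GITEigen}) is only a statement about \emph{which linear inequalities cut out the GIT cone}: it tells you that a given $(u,v,w)$ with Belkale--Kumar product $[pt]$ yields a valid (and eventually necessary) inequality, nothing more. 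The factorization of the invariant-space dimension (equivalently, of the tensor multiplicity) on the corresponding face is an entirely separate and substantially harder theorem, proved by Roth and independently by Ressayre (cited in the paper as \cite[Theorem~3.1]{Roth:red} and \cite[Theorem~1]{reduction}); the paper's proof invokes that reduction rule explicitly after first showing, via Proposition~\ref{prop:ineqP0} applied with $m=2n$, that the inequality $(A,A',B,B',C,C')$ does arise from a well-covering triple $(\tilde I,\tilde J,\tilde K)\in\Schub^{P_0}(\Gr_\omega(r,4n))^3$. As written, your argument has no mechanism to produce a factorization, so the step ``Theorem~\ref{th:BKP} gives the factorization'' would simply fail.

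There is also a secondary imprecision worth flagging: you describe the two factors as a ``Levi part'' and a ``unipotent/Schubert part,'' but in the Roth--Ressayre reduction both factors live on the Levi subgroup $L$ of the relevant maximal parabolic. The paper's argument is careful to work inside $\Sp(4n,\CC)$ with the parabolic $P_0$, so that the Levi of $P^{\alpha_r}$ is of type $A_{r-1}\times C_{2n-r}$. The $A_{r-1}$ factor produces $c_{\lambda_{A,A'},\mu_{B,B'}}^{\nu_{C,C'}^*}$, and the $C_{2n-r}$ factor a priori produces only a symplectic tensor multiplicity $\mult^{2n-r}$, not an NL-number; one must then observe that the truncated weights $\lambda^{A,A'}$, etc.\ have length $\leq n-r$, so that $2n-r\geq 2(n-r)$ puts you in the stable range of \eqref{eq:N=m} and this multiplicity coincides with the NL-number $N_{\lambda^{A,A'},\mu^{B,B'},\nu^{C,C'}}$. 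This last stability check is essential and does not appear in your outline. With the external reduction theorem in hand and the specific $\Sp(4n)/P_0$ setup, the rest of your plan — identifying the Schubert triple via the bijection \eqref{eq:bijIA}, reading off the $A$-type factor via the signed partitions $\lambda_{A,A'}$, etc.\ — lines up with what the paper actually does.
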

Theorem~\ref{th:reduction} is an analogous to 
\cite[Theorem~7.4]{DW:comb} and \cite[Theorem~1.4]{KTT:factorLR} 
for $c_{\lambda,\mu}^{\nu}$. 

Knutson-Tao's celebrated \emph{Saturation Theorem} \cite{KT:saturation} proves,
\emph{inter alia}, that $\lrs(n)$ is  described by Horn's inequalities (see, \emph{e.g.}, Fulton's survey
\cite{Fulton:survey}). 
This posits a generalization:
\begin{conjecture}[{NL-Saturation \cite[Conjecture~5.5]{GOY20a}}]
  \label{conj:saturation}
Let $(\lambda,\mu,\nu)\in(\Par_n)^3$. Then 
$N_{\lambda,\mu,\nu}\neq 0$ if and only if $|\lambda|+|\mu|+|\nu|$ is
even and there exists
$t>0$ such that $N_{t\lambda,t\mu,t\nu}\neq 0$.
\end{conjecture}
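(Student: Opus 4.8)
The plan is as follows. One implication is immediate: if $N_{\lambda,\mu,\nu}\neq 0$ then $t=1$ works, and any nonzero summand $c_{\alpha,\beta}^{\lambda}c_{\beta,\gamma}^{\mu}c_{\gamma,\alpha}^{\nu}$ of \eqref{eqn:Newell-Littlewood} forces $|\lambda|=|\alpha|+|\beta|$, $|\mu|=|\beta|+|\gamma|$, $|\nu|=|\gamma|+|\alpha|$, so that $|\lambda|+|\mu|+|\nu|=2(|\alpha|+|\beta|+|\gamma|)$ is even. Thus the whole content is the converse: assuming $(\lambda,\mu,\nu)\in\nlss(n)$ and $|\lambda|+|\mu|+|\nu|$ even, one must produce $\alpha,\beta,\gamma\in\Par_n$ with all three Littlewood-Richardson coefficients positive.

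First I would recast $N_{\lambda,\mu,\nu}$ as a lattice-point count. Expanding each factor of \eqref{eqn:Newell-Littlewood} in the hive model gives $N_{\lambda,\mu,\nu}=\#\!\left(\mathcal{P}(\lambda,\mu,\nu)\cap\ZZ^{D}\right)$, where $\mathcal{P}(\lambda,\mu,\nu)$ is the rational polytope of triples of integral hives, one for each of $c_{\alpha,\beta}^{\lambda}$, $c_{\beta,\gamma}^{\mu}$, $c_{\gamma,\alpha}^{\nu}$, coupled through the free partitions $\alpha,\beta,\gamma\in\Par_n$. Since $\mathcal{P}(t\lambda,t\mu,t\nu)=t\,\mathcal{P}(\lambda,\mu,\nu)$, this polytope is nonempty exactly when $(\lambda,\mu,\nu)\in\nlss(n)$; and on its affine span the sums $|\alpha|,|\beta|,|\gamma|$ are pinned to $\tfrac12(|\lambda|-|\mu|+|\nu|)$, $\tfrac12(|\lambda|+|\mu|-|\nu|)$, $\tfrac12(-|\lambda|+|\mu|+|\nu|)$ --- which are nonnegative because the triangle inequalities $|\mu|\le|\lambda|+|\nu|$ and its cyclic rotations hold on $\nlss(n)$ (Theorem~\ref{th:minlistineq}), and integral precisely under the parity hypothesis. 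Therefore Conjecture~\ref{conj:saturation} is equivalent to the assertion that \emph{$\mathcal{P}(\lambda,\mu,\nu)$ contains a lattice point whenever it is nonempty and its affine span meets $\ZZ^{D}$} --- a saturation statement relative to the explicit affine sublattice carved out by the parity condition, rather than relative to all of $\ZZ^{D}$.

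Next I would split on the position of $(\lambda,\mu,\nu)$ relative to $\nlss(n)$ and dispose of the boundary by induction on $n$. If some inequality \eqref{Ineq:ABC} is tight, Theorem~\ref{th:reduction} yields $N_{\lambda,\mu,\nu}=c_{\lambda_{A,A'},\mu_{B,B'}}^{\nu_{C,C'}^*}\cdot N_{\lambda^{A,A'},\mu^{B,B'},\nu^{C,C'}}$. One checks that the $\GL$-triple $(\lambda_{A,A'},\mu_{B,B'},\nu_{C,C'})$ lies in the saturated LR-cone --- its relevant dilate is nonzero since $N_{t\lambda,t\mu,t\nu}\neq 0$ and \eqref{eq:NLred} is homogeneous --- so Knutson-Tao saturation \cite{KT:saturation} makes the first factor positive; meanwhile the NL-triple $(\lambda^{A,A'},\mu^{B,B'},\nu^{C,C'})$ has strictly fewer parts, lies in the corresponding lower-rank saturated NL-cone, and inherits the parity hypothesis (the even quantity $|\lambda|+|\mu|+|\nu|$ splits through \eqref{eq:NLred} into the automatically even LR contribution and the NL contribution), so the inductive hypothesis makes the second factor positive. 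This handles every point of $\partial\,\nlss(n)$ and reduces the conjecture to the interior.

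The hard part will be the interior, where no inequality of Theorem~\ref{th:minlistineq} is tight, $\mathcal{P}(\lambda,\mu,\nu)$ is full-dimensional in its affine span, and one must still exhibit a lattice point in the unique admissible parity class. I see three candidate routes, in increasing order of apparent robustness: (i) show that the Ehrhart-type quasi-polynomial $t\mapsto N_{t\lambda,t\mu,t\nu}$, restricted to a maximal chamber of $\nlss(n)$, has period dividing $2$ with its period-$2$ dependence coming exactly from $|\lambda|+|\mu|+|\nu|\bmod 2$, and is strictly positive on the correct residue class, in particular at $t=1$; (ii) deform an interior lattice point of the right parity toward $\partial\,\nlss(n)$ through lattice points of that parity, thereby reducing to the boundary case above; or (iii) develop an ``NL-honeycomb'' model --- an overlay of three Littlewood-Richardson honeycombs meeting along shared boundary rays --- and adapt the Knutson-Tao overlay/measure argument to produce an \emph{integral} NL-honeycomb, carrying the single $\ZZ/2$ obstruction along throughout. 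Route (iii), or a quiver semi-invariant variant in the spirit of Derksen-Weyman \cite{DW:comb}, seems the most promising, since those methods are built to certify integrality rather than mere nonemptiness. The obstacle is precisely that every known saturation argument manufactures an integral object out of any nonempty hive or honeycomb polytope, whereas here the target is a proper affine sublattice, so the argument must be sharpened to guarantee that the integral object it manufactures actually lands inside it.
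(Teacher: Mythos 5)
Your proposal does not constitute a proof, and indeed the paper does not prove this statement either: Conjecture~\ref{conj:saturation} remains open. What the paper actually establishes (Corollary~\ref{cor:saturation}) is the case $n\leq 5$, and its argument is entirely computational --- it uses the inequalities of Theorem~\ref{th:minlistineq} as input to {\sf Normaliz}, computes the Hilbert basis of $\Lambda_2\cap\spss(n)$, and verifies $N_{\lambda,\mu,\nu}>0$ directly on each generator. There is no structural argument in the paper to compare yours against; the paper's "proof" is a finite check.

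Your reduction has some real content, so let me record what works and what doesn't. The easy direction and the reformulation of $N_{\lambda,\mu,\nu}$ as a lattice-point count in a rational polytope (with $|\alpha|,|\beta|,|\gamma|$ pinned by the degree constraints and integral exactly under the parity hypothesis) are both correct. Your boundary step is also essentially sound: if equality holds in some inequality \eqref{Ineq:ABC}, then Theorem~\ref{th:reduction} factors $N_{\lambda,\mu,\nu}$, the parity bookkeeping you sketch does hand the parity hypothesis down to $(\lambda^{A,A'},\mu^{B,B'},\nu^{C,C'})\in\Par_{n-r}^3$ (since the tight inequality forces $\sum(|\lambda_A|+|\lambda_{A'}|+\cdots)$ to be even), LR-saturation handles the $\GL_r$ factor after a shift, and induction on $n$ applies because $r\geq 1$ on every facet. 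But all of this only disposes of $\partial\,\nlss(n)$.

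The genuine gap is the interior case, which you explicitly leave as a list of three "candidate routes" without carrying any of them out. Route~(ii) is circular as stated: "deforming an interior lattice point of the right parity toward the boundary" presupposes you already have a point witnessing $N_{\lambda,\mu,\nu}>0$, which is the thing to be shown, and deforming the \emph{input} $(\lambda,\mu,\nu)$ to the boundary only shows positivity at the deformed triple, not at the original one, absent a monotonicity or semigroup-decomposition lemma you do not supply. Route~(i) (an Ehrhart period-$2$ statement on chambers) and route~(iii) (an integral "NL-honeycomb" overlay) are reasonable research directions, but neither is a proof, and you yourself note that every known saturation argument produces an integral hive/honeycomb out of any nonempty polytope, whereas here the target is a proper affine sublattice --- precisely the obstruction you have not overcome. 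In short: what you have is a plausible reduction of the conjecture to its interior-of-cone case plus a survey of possible attacks, not a proof; and the paper, for its part, proves only $n\leq 5$ by machine computation, so there is no paper proof that your proposal could be said to recover or diverge from.
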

Theorem~\ref{th:minlistineq} permits us to prove 
Conjecture~\ref{conj:saturation} for  $n\leq 5$, by computer-aided calculation of Hilbert bases;
see Section~\ref{sec:apptosat5}. This is the strongest evidence of the conjecture
to date; previously, \cite[Corollary~5.16]{GOY20a} proved the $n=2$ case, by combinatorial
reasoning.

Let $\lambda_1,\dots,\lambda_s\in \Par_n$ for $s\geq 3$. 
Treat the indices $1,\dots,s$ as elements of $\ZZ/s\ZZ$. 
We introduce the \emph{multiple Newell-Littlewood number} as
\begin{equation}
  \label{eq:defmNL}
  N_{\lambda_1,\dots,\lambda_s}=\sum_{(\alpha_1,\dots,\alpha_s)\in(\Par_n)^{s}}\prod_{i\in\ZZ/s\ZZ}
c_{\alpha_i\,\alpha_{i+1}}^{\lambda_i}. 
\end{equation}
When $s=3$, we recover the Newell-Littlewood
numbers.\footnote{$N_{\lambda_1,\dots,\lambda_s}$ also has a uniform
  representation-theoretic interpretation. Discussion may appear
  elsewhere.}  
Consider the associated semi-group and cone:
$$
\operatorname{NL}^s\!\operatorname{-semigroup}
  (n)=\{(\lambda_1,\dots,\lambda_s)\in(\Par_n)^s\,:\,
  N_{\lambda_1,\dots,\lambda_s}> 0\}, 
$$
and
$$
\operatorname{NL}^s\!\operatorname{-sat}(n)=\{(\lambda_1,\dots,\lambda_s)\in (\Par_n^\QQ)^s\,:\, \exists t>0 \quad N_{t\lambda_1,\dots,t\lambda_s}\neq 0\}.
$$
To $A=\{i_1<\cdots<i_r\}\subset[n]$, associate the partition
\begin{equation}
\label{taudef}
\tau(A)=(i_r-r\geq\cdots\geq i_1-1).
\end{equation}

The Horn
inequalities for $\lrss(n)$ are recursive, as they depend on
$\lrss(n')$ for $n'<n$ (see \cite{Fulton:survey}).
Theorem~\ref{th:minlistineq}  is not recursive. 
However, our next result describes the cone $\nlss(n)$ by inequalities
depending on $\operatorname{NL}^6\!\operatorname{-sat}(n')$ for $n'\leq
  n$.

\begin{theorem}
  \label{th:GOYconj}
Let $(\lambda,\mu,\nu)\in(\Par_n^{\mathbb Q})^3$. Then $(\lambda,\mu,\nu)\in
\nlss(n)$ if and only if
\begin{equation}
\label{Ineq:EH}
0\leq |\lambda_A|-|\lambda_{A'}|+|\mu_B|-|\mu_{B'}|+|\nu_C|-|\nu_{C'}|
\end{equation}  
for any subsets $A,A',B,B',C,C'\subset [n]$ such that
\begin{enumerate}
\item $A \cap A'= B \cap B' = C \cap C' = \emptyset$;
\item $|A|+|A'|=|B|+|B'|=|C|+|C'|=|A'|+|B'|+|C'|=:r$;
\item \label{cond:ind6} 
$(\tau(A), \tau(C'), \tau(B), \tau(A'), \tau(C), \tau(B'))\in \operatorname{NL}^6\!\operatorname{-sat}(r)$. 
\end{enumerate}
\end{theorem}
 
This result is proved in Section~\ref{sec:relationtoGOY}.  
By a result of R.~King \cite{King71} (see also \cite{HTWbranching}),
each six-fold NL-coefficient is a particular Littlewood-Richardson coefficient
(see Section~\ref{sec:relationtoGOY} for details).
Consequently, condition \eqref{cond:ind6} is equivalent to checking if some
explicitly determined triple of partitions is in $\lrss(2r)$. Since $\lrss(2r)$ is described by the Horn inequalities,
we thereby obtain a description of $\nlss(n)$ only involving inequalities and no tensor product multiplicities. It is in this sense that Theorem~\ref{th:GOYconj} is of the same spirit as Horn's original inequalities.

Just as the proof of A.~Horn's conjecture depends on A.~Knutson-T.~Tao's saturation theorem, our
proof of Theorem~\ref{th:GOYconj} uses this consequence
of R.~King's result (see Section~\ref{sec:GLstable}):

\begin{proposition}[Six-fold NL saturation]\label{prop:satNL6}
  Let $\lambda_1,\dots,\lambda_6\in \Par_n$. If there
  exists a positive integer $t$ such that
  $N_{t\lambda_1,\dots,t\lambda_6}\neq 0$ then
  $N_{\lambda_1,\dots,\lambda_6}\neq 0$.
\end{proposition}

In \cite[Conjecture~1.4]{GOY20b}, a conjectural 
description of $\nlss(n)$ was given. That conjecture subsumes 
both Conjecture~\ref{conj:saturation} and a description of $\nlss$ using \emph{extended Horn inequalities} \cite[Definition~1.2]{GOY20b}. 
Theorem~\ref{th:GOYconj} proves the latter part of the conjecture.

\section{Generalities on the tensor cones}

\subsection{Finitely generated semigroups} A subset
$\Gamma\subseteq\ZZ^n$ is a {\it semigroup} if $\vec 0\in \Gamma$ and
$\Gamma$ is closed under addition. A finitely generated semigroup
$\Gamma$ \emph{generates} a closed convex polyhedral cone $\Gamma_\QQ\subseteq \QQ^n$:
$$
\Gamma_\QQ=\{x\in\QQ^n\;:\;\exists t\in\ZZ_{>0}\quad tx\in\Gamma\}.
$$
The subgroup of $\ZZ^n$ generated by $\Gamma$ is
$$
\Gamma_\ZZ=\{x-y\,:\,x,y\in\Gamma\}.
$$
The semigroup $\Gamma$ is {\it saturated} if 
$\Gamma=\Gamma_\ZZ\cap\Gamma_\QQ$.

\subsection{GIT-semigroups}\label{sec:GITsemi}

We recall the GIT-perspective of \cite{GITEigen}. Let $G$ be a complex reductive group acting on an irreducible projective variety
$X$. Let $\Pic^G(X)$ be the group of $G$-linearized line bundles.
 Given $\Li\in \Pic^G(X)$, let $\Ho^0(X,\Li)$ be the
space of sections of $\Li$; it is a $G$-module. Let $\Ho^0(X,\Li)^G$
be the subspace of invariant sections. Define
$$
\gits(G,X)=\{\Li\in \Pic^G(X)\,:\, \Ho^0(X,\Li)^G\neq\{0\}\}.
$$
This is a semigroup since $X$ being irreducible says the product of two
nonzero $G$-invariant sections is a nonzero $G$-invariant section.
The saturated version of it is 
$$
\gitss(G,X)=\{\Li\in \Pic^G(X)\otimes\QQ\,:\, \exists t>0 \quad \Ho^0(X,\Li^{\otimes t})^G\neq\{0\}\}.
$$

\subsection{The tensor semigroup}\label{subsec:tensorsemi}
Let $\lg$ be a semisimple complex Lie algebra, with fixed Borel
subalgebra $\lb$ and Cartan subalgebra $\lt\subset\lb$. 
Denote by $\Lambda^+(\lg)\subset\lt^*$ the semigroup of the dominant
weights.
It is contained in the weight lattice $\Lambda(\lg)\simeq\ZZ^r$, where $r$ is the rank of $\lg$.
Given $\lambda\in \Lambda^+(\lg)$, denote by $V_\lg(\lambda)$ (or
simply $V(\lambda)$) the irreducible representation of $\lg$ with
highest weight $\lambda$. Let $V(\lambda)^*$ be the dual representation.
Consider the semigroup
$$
\lgs=\{(\lambda,\mu,\nu)\in(\Lambda^+(\lg))^3\;:\;V(\nu)^*\subset
V(\lambda)\otimes V(\mu)\},
$$
and the generated cone $\lgss$ in $(\Lambda(\lg)\otimes \QQ)^3$.
When $\lg=\sp(2m,\CC)$ we have $V(\nu)^*\simeq V(\nu)$ and $\lgs$ is what we
denoted by $\sp$-semigroup$(m)$ in the introduction.
The set $\lgs$ spans the rational vector space
$(\Lambda(\lg)\otimes \QQ)^3$, or equivalently, the cone $\lgss$ has
nonempty interior. 
The group $(\lgs)_\ZZ$ is well-known (see,
\emph{e.g.}, \cite[Theorem~1.4]{PRexample}):
$$
(\lgs)_\ZZ=\{(\lambda,\mu,\nu)\in (\Lambda(\lg))^3\;:\;
\lambda+\mu+\nu\in\Lambda_R(\lg)\},
$$
where $\Lambda_R(\lg)$ is the root lattice of $\lg$.

We now interpret $\lgs$ in terms of Section~\ref{sec:GITsemi}. 
Consider the semisimple, simply-connected algebraic group $G$ with Lie algebra $\lg$.
Denote by $B$ and $T$ the connected subgroups of $G$ with Lie algebras
$\lb$ and $\lt$ respectively.
The character groups $X(B)=X(T)=\Lambda(\lg)$ of $B$ and $T$ coincide. For
$\lambda\in X(T)$, $\Li_\lambda$ is the unique
$G$-linearized line bundle on the flag variety $G/B$ such that $B$
acts on the fiber over $B/B$ with weight $-\lambda$.

Assume $X=(G/B)^3$. 
Then  $\Pic^G(X)$ identifies with $X(T)^3$. 
For $(\lambda,\mu,\nu)\in X(T)^3$, define $\Li_{(\lambda,\mu,\nu)}$. 
By the Borel-Weil Theorem, 
\begin{equation}
\label{eqn:April30BorelWeil}
\Ho^0(X,\Li_{(\lambda,\mu,\nu)})=V(\lambda)^*\otimes V(\mu)^*\otimes V(\nu)^*.
\end{equation}
In particular, $\gits(G,X)\simeq \lgs$.

Given three parabolic subgroups $P,Q$ and $R$ containing $B$, 
we consider more generally $X=G/P\times G/Q\times G/R$. 
Then $\Pic^G(X)$ identifies with $X(P)\times X(Q)\times X(R)$ which is
a subgroup of $X(T)^3$. Moreover,
$$
\gits(G,X)=\gits(G,(G/B)^3)\cap(X(P)\times X(Q)\times X(R)).
$$
\subsection{Schubert calculus}

We need notation for the cohomology ring
$\Ho^*(G/P,\ZZ)$; $P\supset B$ being a parabolic subgroup. 
Let $W$ (resp. $W_P$) be the Weyl group of $G$ (resp. $P$).
Let $\ell\,:\,W\longto\NN$ be the \emph{Coxeter length}, defined with respect to the
simple reflections determined by the choice of $B$.
Let $W^P$ be the minimal length representatives of the cosets
in $W/W_P$.
  
For a closed irreducible subvariety $Z\subset G/P$, let $[Z]$ be
its class in $\Ho^*(G/P,\ZZ)$, of degree $2(\dim(G/P)-\dim(Z))$.
For $v \in W^P$, set 
\[\tau_v=[\overline{BvP/P}]\] ($\dim(\overline{BvP/P})=\ell(v)$).
Then
$$
\Ho^*(G/P,\ZZ)=\bigoplus_{v\in W^P}\ZZ\tau_v.
$$
Let $w_0$ be the longest element of $W$
and $w_{0,P}$ be the longest element of $W_P$. 
Set $v^\vee=w_0vw_{0,P}$ and $\tau_v=\tau^{v^\vee}$; $\tau^v$ and $\tau_v$ are Poincar\'e dual.

Let $\rho$ be the half sum of the positive roots of $G$.
To any one-parameter subgroup $\tau:{\mathbb C}^*\to T$, associate the
parabolic subgroup (see \cite{GIT})
$$
P(\tau)=\{g\in G\,:\,\lim_{t\to 0}\tau(t)g\tau(t\inv) \mbox{ exists}\}.
$$
Fix such a $\tau$ such that $P=P(\tau)$.

For $v\in W^P$, define the \emph{BK-degree} of $\tau^v\in \Ho^*(G/P,\ZZ)$ to be
\[
\BKdeg(\tau^v):=\scal{v\inv(\rho)-\rho,\tau}
.\]
Let $v_1,v_2$ and $v_3$ in $W^P$. By \cite[Proposition 17]{BK}, if
$\tau^{v_3}$ appears in
the product $\tau^{v_1} \cdot \tau^{v_2}$ then 
\begin{equation}
  \label{eq:13}
  \BKdeg(\tau^{v_3})\leq \BKdeg(\tau^{v_1})+\BKdeg(\tau^{v_2}).
\end{equation}
In other words, the BK-degree filters the cohomology ring. 
Let $\odot_0$ denote the associated graded product on $\Ho^*(G/P,\ZZ)$.

\subsection{Well-covering pairs}
In \cite{GITEigen}, $\gitss(G,X)$ is described in terms of \emph{well-covering pairs}. 
When $X=(G/B)^3$, it recovers the description made by
Belkale-Kumar \cite{BK}. We now discuss the case when
$X=G/P\times G/Q\times G/R$ is the product of three partial flag varieties of $G$.
 
Let $\tau$ be a dominant one-parameter subgroup of $T$. 
The centralizer $G^\tau$ of the image of $\tau$ in $G$ is a Levi
subgroup. 
Moreover, 
$
P(\tau)
$
is the parabolic subgroup generated by $B$ and $G^\tau$.
Let $C$ be an irreducible component of the fixed set $X^\tau$ of
$\tau$ in $X$. 
It is well-known that $C$ is the $(G^\tau)^3$-orbit of some $T$-fixed
point:
\begin{equation}
C=G^\tau u\inv P/P\times G^\tau v\inv Q/Q \times G^\tau w\inv
R/R,\label{eq:Cw}
\end{equation}
with $u\in {W_P}\backslash W/W_{P(\tau)}$ and similarly for $v$ and
$w$.
Set
$$
C^+=P(\tau) u\inv P/P\times P(\tau) v\inv  Q/Q \times P(\tau) w\inv  R/R.
$$
Then the closure of $C^+$ is a Schubert variety (for $G^3$) in $X$.
By \cite[Proposition~11]{GITEigen}, the pair $(C,\tau)$ is
{\it well-covering} if and only if
\begin{equation}
[\overline{PuP(\tau)/P(\tau)}]\odot_0
[\overline{QvP(\tau)/P(\tau)}]\odot_0
[\overline{RwP(\tau)/P(\tau)}]=[pt]
\in \Ho^*(G/P(\tau),\ZZ).
\label{eq:defwc}
\end{equation}
It is said to be {\it dominant} if
\begin{equation}
[\overline{PuP(\tau)/P(\tau)}]\cdot
[\overline{QvP(\tau)/P(\tau)}]\cdot
[\overline{RwP(\tau)/P(\tau)}]\neq 0
\in \Ho^*(G/P(\tau),\ZZ).
\label{eq:defdom}
\end{equation}
In this paper, the reader can take these characterizations as 
definitions of  well-covering and dominant pairs.
They are used in \cite{GITEigen} to produce inequalities for the
GIT-cones:

\begin{proposition}
  \label{prop:coneineqplus}
Let $(\lambda,\mu,\nu)\in X(P)\times X(Q)\times X(R)$ be dominant such that
$\Li_{(\lambda,\mu,\nu)}\in\gitss(G,X)$.
Let  $(C,\tau)$ be a dominant pair.
Then,
\begin{equation}
  \label{eq:domineq}
  \scal{u\tau,\lambda}+\scal{v\tau,\mu}+\scal{w\tau,\nu}\leq 0.
\end{equation}
Here $(u,v,w)$ are determined by $C$ and equation~\eqref{eq:Cw}.
\end{proposition}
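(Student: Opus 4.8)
The plan is to extract the inequality \eqref{eq:domineq} from the behaviour of a single $G$-invariant section of a power of $\Li_{(\lambda,\mu,\nu)}$ along a $\tau$-degeneration that lands in $C$. Concretely, I would first fix $N>0$ with $\Ho^0(X,\Li_{(\lambda,\mu,\nu)}^{\otimes N})^G\neq\{0\}$ — this is exactly the content of $\Li_{(\lambda,\mu,\nu)}\in\gitss(G,X)$ — and choose a nonzero section $\sigma$ there; its non-vanishing locus $U\subseteq X$ is a nonempty $G$-stable open set. Then I would invoke the geometric meaning of the dominance hypothesis: by \cite{GITEigen} (the input that turns the cohomological condition \eqref{eq:defdom} into a criterion for dominance, which in turn rests on the Kleiman-transversality technique of \cite{BK}), a dominant pair $(C,\tau)$ has $G\cdot C^+$ dense in $X$. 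Since $U$ is also dense, $G\cdot C^+$ meets $U$, so there are $g\in G$ and $x\in C^+$ with $\sigma(gx)\neq 0$; as $\sigma$ is $G$-invariant, $\sigma(x)\neq 0$ as well.

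Next I would degenerate $x$ under $\tau$. The assignment $s\mapsto\tau(s)x$, $s\neq 0$, extends to a morphism $f\colon\CC\to X$ with $f(0)=x_0:=\lim_{s\to 0}\tau(s)x$; the limit exists and lies in $C$ precisely because $x\in C^+$, and $f$ is equivariant for the scaling action of $\CC^*$ on $\CC$. Since $x_0$ is $\tau$-fixed, $\tau$ acts on the line $(\Li_{(\lambda,\mu,\nu)}^{\otimes N})_{x_0}$ through a single integer weight $m$, and $f^{*}\Li_{(\lambda,\mu,\nu)}^{\otimes N}$ is the $\CC^*$-equivariant trivial line bundle on $\CC$ with action $t\cdot(s,z)=(ts,t^{m}z)$. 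Writing the regular section $f^{*}\sigma$ as $s\mapsto(s,\phi(s))$ with $\phi\in\CC[s]$, its $\CC^*$-invariance gives $\phi(ts)=t^{m}\phi(s)$, hence $\phi(s)=c\,s^{m}$, with $c\neq 0$ because the value of $f^{*}\sigma$ at $s=1$ is $\sigma(x)\neq 0$. Since $\phi$ is a polynomial, $m\geq 0$.

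Finally I would identify $m$. The weight of $\tau$ on the fibre $(\Li_{(\lambda,\mu,\nu)}^{\otimes N})_y$ is locally constant in $y$, hence constant on the connected set $C$, so I may compute it at the $T$-fixed point $y=(u^{-1}P/P,\,v^{-1}Q/Q,\,w^{-1}R/R)\in C$ provided by \eqref{eq:Cw}. Using that the fibre of $\Li_\chi$ over $eP/P$ carries the $B$-weight $-\chi$ — so its fibre over $u^{-1}P/P$ carries the $T$-weight $-u^{-1}\lambda$, legitimate since $\lambda$ is $P$-dominant, and whose pairing against $\tau$ is independent of the choice of coset representative — one obtains $m=N\bigl(-\scal{u\tau,\lambda}-\scal{v\tau,\mu}-\scal{w\tau,\nu}\bigr)$. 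Dividing $m\geq 0$ by $-N<0$ yields \eqref{eq:domineq}.

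The only step that is not routine bookkeeping is the passage from the cohomological condition \eqref{eq:defdom} to the statement that $G\cdot C^+$ is dense in $X$; this is the technical core of the GIT/Schubert-calculus machinery of \cite{BK, GITEigen}, and I would simply quote it. Everything else reduces to the elementary $\CC^*$-orbit computation above, together with a standard weight calculation on $G/P\times G/Q\times G/R$.
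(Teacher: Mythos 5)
Your argument is correct. The paper does not reprove this proposition---it is quoted from \cite{GITEigen}---but what you give is exactly the standard Hilbert--Mumford-style argument underlying that reference: density of $G\cdot C^{+}$ (from dominance, via the Kleiman/Belkale--Kumar transversality machinery) places a point of $C^{+}$ in the non-vanishing locus of an invariant section, the $\CC^{*}$-degeneration along $\tau$ forces the $\tau$-weight on the fibre over the limit in $C$ to be nonnegative, and the weight computation at the $T$-fixed point of \eqref{eq:Cw} identifies that weight with $-N\bigl(\scal{u\tau,\lambda}+\scal{v\tau,\mu}+\scal{w\tau,\nu}\bigr)$. So this is essentially the same approach as the source.
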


In Proposition~\ref{prop:coneineqplus} and below, $\scal{\cdot,\cdot}$
denotes the pairing between one parameter subgroups and characters of $T$.
Among the inequalities given by Proposition~\ref{prop:coneineqplus},
those associated to well-covering pairs are sufficient to define the
GIT-cone
(see \cite[Proposition~4]{GITEigen}):

\begin{proposition}\label{prop:cone1}
  Let $(\lambda,\mu,\nu)\in X(P)\times X(Q)\times X(R)$ be dominant. Then,
\[\Li_{(\lambda,\mu,\nu)}\in \gitss(G,X)\] 
if and only if for any
dominant one-parameter subgroup $\tau$ of $T$ and any well-covering
pair $(C,\tau)$,
\begin{equation}
  \label{eq:wcineq}
  \scal{u\tau,\lambda}+\scal{v\tau,\mu}+\scal{w\tau,\nu}\leq 0.
\end{equation}
Here $(u,v,w)$ are determined by $C$ and equation~\eqref{eq:Cw}.\footnote{Proposition~\ref{prop:cone1} implies Proposition~\ref{prop:coneineqplus} is also a characterization of $\gitss(G,X)$, but we have stated the weaker form of the latter to emphasize it is an easier result.}
\end{proposition}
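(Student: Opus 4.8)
\emph{Proof proposal.} The plan is to run the standard GIT dictionary behind the definition of $\gitss(G,X)$. Since $(\lambda,\mu,\nu)$ is dominant, $\Li:=\Li_{(\lambda,\mu,\nu)}$ is an ample $G$-linearized line bundle on $X=G/P\times G/Q\times G/R$, so Mumford's theory \cite{GIT} gives $\Li\in\gitss(G,X)$ if and only if the semistable locus $X^{\mathrm{ss}}(\Li)$ is nonempty, where semistability of a point $x$ is tested against all one-parameter subgroups $\tau$ of $G$ via the Hilbert--Mumford numerical function $\mu^\Li(x,\tau)$; moreover, at a $T$-fixed point of type $(u,v,w)$ as in \eqref{eq:Cw} the value $\mu^\Li(x,\tau)$ equals $\scal{u\tau,\lambda}+\scal{v\tau,\mu}+\scal{w\tau,\nu}$ up to an overall sign fixed once and for all by the convention (compatibly with Proposition~\ref{prop:coneineqplus}, so that the ``destabilizing'' sign is the one giving $\scal{u\tau,\lambda}+\scal{v\tau,\mu}+\scal{w\tau,\nu}>0$). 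I would first dispatch necessity of \eqref{eq:wcineq}, then prove sufficiency by contradiction.

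\emph{Necessity.} A well-covering pair is in particular dominant: by definition \eqref{eq:defwc} the $\odot_0$-product of the three Schubert classes in $\Ho^*(G/P(\tau),\ZZ)$ equals $[pt]$, and since $\odot_0$ is the graded multiplication attached to a ring filtration and the Schubert structure constants of $\Ho^*(G/P(\tau),\ZZ)$ are nonnegative, $[pt]$ occurs with coefficient $1$ in the ordinary product too, so \eqref{eq:defdom} holds. Hence, when $\Li_{(\lambda,\mu,\nu)}\in\gitss(G,X)$ with $(\lambda,\mu,\nu)$ dominant, Proposition~\ref{prop:coneineqplus} applied to each well-covering pair yields \eqref{eq:wcineq}.

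\emph{Sufficiency.} Suppose \eqref{eq:wcineq} holds for every dominant $\tau$ and every well-covering pair, but $\Li\notin\gitss(G,X)$, i.e. $X^{\mathrm{ss}}(\Li)=\emptyset$: every point of $X$ is $\Li$-unstable. By Kempf's instability theory (and the Hesselink--Kirwan--Ness stratification), each unstable $x$ has an essentially unique optimal destabilizing one-parameter subgroup $\tau_x$, maximizing $\mu^\Li(x,\tau)/\|\tau\|$ and well-defined up to conjugation by $P(\tau_x)$, and $X$ decomposes into finitely many locally closed Kempf--Ness strata. As $X$ is irreducible, one stratum $S_0$ is dense. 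Its local structure provides a dominant one-parameter subgroup $\tau$ and an irreducible component $C$ of $X^\tau$, with attached $(u,v,w)$ as in \eqref{eq:Cw}, such that $S_0=G\cdot C^{+,\circ}$ where $C^{+,\circ}\subseteq C^+$ consists of the points attracted to $C$ by $\tau$ and achieving the optimal value of $\mu^\Li(\cdot,\tau)$. Denseness of $S_0$ makes the $G$-equivariant map $\eta\colon G\times_{P(\tau)}\overline{C^+}\to X$ dominant; uniqueness of the optimal destabilizer through a general point of $S_0$ makes $\eta$ generically injective, hence birational, and a further argument using smoothness of $C^{+}$ and of $X$ near a general point of $C$ promotes this to the local-isomorphism statement along such a point, which by \cite[Proposition~11]{GITEigen} is the geometric content of \eqref{eq:defwc}. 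Thus $(C,\tau)$ is well-covering. But for $x\in C^{+,\circ}$ the point $x$ is $\Li$-unstable with $\tau$ destabilizing, so $\mu^\Li(x,\tau)$ has the destabilizing sign, i.e. $\scal{u\tau,\lambda}+\scal{v\tau,\mu}+\scal{w\tau,\nu}>0$, contradicting \eqref{eq:wcineq} for $(C,\tau)$. Therefore $X^{\mathrm{ss}}(\Li)\neq\emptyset$ and $\Li\in\gitss(G,X)$.

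\emph{The main obstacle.} Everything except one step is the standard GIT dictionary together with Proposition~\ref{prop:coneineqplus}. The delicate point is showing that the pair $(C,\tau)$ extracted from the dense unstable stratum is \emph{well}-covering rather than merely dominant: this is where Kempf's uniqueness theorem for the optimal destabilizing subgroup must be turned into birationality of $\eta$ and then into the local-isomorphism statement \eqref{eq:defwc}, with care about the dimension bookkeeping relating $\dim S_0$, $\dim\bigl(G\times_{P(\tau)}\overline{C^+}\bigr)$, and $\dim X$, and about the fact that the generic local-isomorphism locus of $\eta$ is $G$-stable and must be shown to meet $\{[1,c]:c\in C\}$. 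This is the heart of the argument in \cite{GITEigen}, and the part I expect to require real work.
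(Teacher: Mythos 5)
This proposition is not proved in the paper: the authors simply cite it as \cite[Proposition~4]{GITEigen}, so there is no in-paper argument to compare against. Your sketch is a reasonable high-level reconstruction of the strategy in that reference, but a few points deserve attention.

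For necessity, your observation that well-covering implies dominant is essentially right: the $\odot_0$-structure constants form a subset of the cup-product structure constants (they agree when the BK-degrees add up, and are zero otherwise), so $\tau^u\odot_0\tau^v\odot_0\tau^w=[pt]$ forces the coefficient of $[pt]$ in the ordinary triple product to be $\geq 1$ --- not necessarily $=1$ as you wrote, but $\geq 1$ is all that dominance requires.

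For sufficiency, two issues. First, $\Li_{(\lambda,\mu,\nu)}$ for merely dominant $(\lambda,\mu,\nu)\in X(P)\times X(Q)\times X(R)$ is only nef/semi-ample, not ample; ampleness would need strict dominance relative to $P,Q,R$. The Hilbert--Mumford and Kempf machinery is most cleanly set up for ample linearizations, so one must either perturb into the ample cone and pass to the limit, or invoke the semi-ample extension that \cite{GITEigen} develops. Second, and more substantially, the step you flagged as the obstacle --- upgrading the pair $(C,\tau)$ attached to the dense Kempf--Ness stratum from dominant to well-covering, i.e., from birationality of $G\times_{P(\tau)}\overline{C^+}\to X$ to the local-isomorphism condition encoded by \eqref{eq:defwc} --- is genuinely where the entire proof lives, and your proposal does not close it. You have correctly identified what needs to be done, but as written this is an outline of the argument in \cite{GITEigen}, not a self-contained proof.
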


In the case $P=Q=R=B$, there is a more precise
statement.  The fact that the inequalites define the cone is 
Belkale-Kumar \cite[Theorem~28]{BK}. 
The irredundancy is \cite[Theorem~B]{GITEigen}.
Let $\alpha$ be a simple root of
$G$. Denote by $P^\alpha$ the associated
maximal parabolic subgroup of $G$ containing $B$. 
Denote by $\varpi_{\alpha^\vee}$ the associated fundamental
one-parameter subgroup of $T$ characterized by
$\scal{\varpi_{\alpha^\vee},\beta}=\delta_\alpha^\beta$ for any simple
root $\beta$.

\begin{theorem}[{\cite[Theorem~28]{BK},\cite[Theorem~B]{GITEigen}}]\label{th:BK}
Here $X=(G/B)^3$.
   Let $(\lambda,\mu,\nu)\in X(T)^3$ be dominant. 
Then,
$\Li_{(\lambda,\mu,\nu)}\in\gitss(G,X)$ if and only if for any
simple root $\alpha$, for any $u,v,w$ in $W^{P^\alpha}$ such that
\begin{equation}
  \label{eq:BKcohomGB}
  [\overline{BuP^\alpha/P^\alpha}]\odot_0
[\overline{BvP^\alpha/P^\alpha}]\odot_0
[\overline{BwP^\alpha/P^\alpha}]=[pt]
\in \Ho^*(G/P^\alpha,\ZZ),
\end{equation}
\begin{equation}
  \label{eq:BKineqGB}
  \scal{u\varpi_{\alpha^\vee},\lambda}+\scal{v\varpi_{\alpha^\vee},\mu}+\scal{w
    \varpi_{\alpha^\vee},\nu}\leq 0.
\end{equation}
Moreover, this list of inequalities is irredundant. 
\end{theorem}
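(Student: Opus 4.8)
\emph{Proof strategy.} The plan is to obtain Theorem~\ref{th:BK} by specializing Proposition~\ref{prop:cone1} to $X=(G/B)^3$ and then performing two reductions: cutting the list of one-parameter subgroups down to the fundamental coweights $\varpi_{\alpha^\vee}$, and establishing irredundancy. First I would apply Proposition~\ref{prop:cone1} with $P=Q=R=B$. Since $W_B$ is trivial, it asserts that, for dominant $(\lambda,\mu,\nu)$, one has $\Li_{(\lambda,\mu,\nu)}\in\gitss(G,(G/B)^3)$ if and only if $\scal{u\tau,\lambda}+\scal{v\tau,\mu}+\scal{w\tau,\nu}\leq 0$ for every dominant one-parameter subgroup $\tau$ of $T$ and every well-covering pair $(C,\tau)$, where $u,v,w\in W^{P(\tau)}$ are determined by \eqref{eq:Cw} and the well-covering condition reads $[\overline{BuP(\tau)/P(\tau)}]\odot_0[\overline{BvP(\tau)/P(\tau)}]\odot_0[\overline{BwP(\tau)/P(\tau)}]=[pt]$ in $\Ho^*(G/P(\tau),\ZZ)$. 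So what must be added is: (i) it suffices to keep the pairs with $\tau=\varpi_{\alpha^\vee}$, which for $P=B$ are exactly the data of \eqref{eq:BKcohomGB}; and (ii) the surviving list is irredundant.

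\emph{Reduction to fundamental coweights.} Let $(C,\tau)$ be a well-covering pair with $\tau$ an arbitrary dominant one-parameter subgroup, and write $\tau=\sum_{\alpha\in S}c_\alpha\varpi_{\alpha^\vee}$ with all $c_\alpha>0$, where $S$ is the set of simple roots that appear; then $P(\tau)$ is the parabolic obtained by deleting the simple roots in $S$, so $P(\tau)\subseteq P^\alpha$ for every $\alpha\in S$. Because $W_{P^\alpha}$ fixes $\varpi_{\alpha^\vee}$, each pairing $\scal{u\varpi_{\alpha^\vee},\lambda}$ depends only on the coset of $u$ (so we may freely replace $u,v,w$ by their minimal representatives in $W^{P^\alpha}$), and by $\QQ$-linearity in $\tau$ the inequality attached to $(C,\tau)$ equals $\sum_{\alpha\in S}c_\alpha\big(\scal{u\varpi_{\alpha^\vee},\lambda}+\scal{v\varpi_{\alpha^\vee},\mu}+\scal{w\varpi_{\alpha^\vee},\nu}\big)\leq 0$. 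The key step is to show that well-covering of $(C,\tau)$ for $P(\tau)$ implies, for every $\alpha\in S$, that $[\overline{BuP^\alpha/P^\alpha}]\odot_0[\overline{BvP^\alpha/P^\alpha}]\odot_0[\overline{BwP^\alpha/P^\alpha}]=[pt]$ in $\Ho^*(G/P^\alpha,\ZZ)$. This is proved from the multigraded refinement of the BK-degree on $\Ho^*(G/P(\tau))$ into an $S$-indexed tuple of degrees: an $\odot_0$-product equals $[pt]$ precisely when, for each $\alpha\in S$, it is of top degree in the $\alpha$-grading, and the $\alpha$-component is governed by pullback along the projection $G/P(\tau)\to G/P^\alpha$. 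Granting this, every summand above is one of the inequalities of Theorem~\ref{th:BK}, hence $\leq 0$ on $\gitss$, so the inequality for $(C,\tau)$ is a nonnegative combination of listed ones and may be discarded. Only $\tau=\varpi_{\alpha^\vee}$ survive; for these $P(\tau)=P^\alpha$ and \eqref{eq:defwc}--\eqref{eq:wcineq} are literally \eqref{eq:BKcohomGB}--\eqref{eq:BKineqGB}, while conversely any triple obeying \eqref{eq:BKcohomGB} is a well-covering pair, so \eqref{eq:BKineqGB} is a valid inequality. Thus the list of Theorem~\ref{th:BK} defines $\gitss(G,(G/B)^3)$ inside the dominant chamber.

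\emph{Irredundancy.} For each $(\alpha,u,v,w)$ satisfying \eqref{eq:BKcohomGB} I would follow \cite[Theorem~B]{GITEigen} to exhibit a point of $\gitss$ in the relative interior of the hyperplane $\scal{u\varpi_{\alpha^\vee},\lambda}+\scal{v\varpi_{\alpha^\vee},\mu}+\scal{w\varpi_{\alpha^\vee},\nu}=0$ lying on no other facet. First note that such a pair is automatically \emph{dominant}: since $\odot_0$ is the associated graded of cup product under the BK-filtration and $[pt]$ spans the top cohomological degree of $G/P^\alpha$, condition \eqref{eq:BKcohomGB} forces the ordinary triple product to equal $[pt]\neq 0$, which is \eqref{eq:defdom}. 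One then uses the transversality packaged in ``$\odot_0$-product $=[pt]$ with coefficient one'' to construct a semistable point of $X$ for a line bundle $\Li_{(\lambda,\mu,\nu)}$ whose invariant sections form a line, pinning $(\lambda,\mu,\nu)$ to that wall alone; a deformation argument upgrades the wall to a genuine facet, and distinct data yield distinct facets.

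\emph{Main obstacle.} The crux is the reduction step: deducing the $|S|$ separate maximal-parabolic well-covering conditions from the single condition for $P(\tau)$. This fails for the ordinary cup product (which is exactly why the larger Berenstein--Sjamaar list is redundant), so the proof must genuinely use that $\odot_0$ is the BK-graded product, whose structure constants vanish unless BK-degrees add exactly; this is what permits ``splitting off'' one simple root at a time. The irredundancy in the last step is also substantial, but is in essence imported from \cite{GITEigen}.
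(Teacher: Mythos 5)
Your proposal is correct and takes essentially the same route the paper indicates: the paper does not prove Theorem~\ref{th:BK} (it cites \cite{BK} and \cite{GITEigen}), but remarks that it follows from Proposition~\ref{prop:cone1} by restricting to $\tau=\varpi_{\alpha^\vee}$, ``see the proof of Theorem~\ref{th:BKP} below,'' and your reduction step is exactly the one carried out there. The ``multigraded refinement of BK-degree / splitting off one simple root at a time'' claim you gesture at is precisely the product formula for the Belkale--Kumar ring that the paper invokes via \cite[Theorem~1.1 and Section~1.1]{Rich:mult} in the proof of Theorem~\ref{th:BKP}, and irredundancy is, in both your sketch and the paper, imported wholesale from \cite[Theorem~B]{GITEigen}.
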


Theorem~\ref{th:BK} can be obtained from
Proposition~\ref{prop:cone1} by showing that it is sufficient to
consider the one-parameter subgroups $\tau$ equal to
$\varpi_{\alpha^\vee}$ for some simple root $\alpha$.
See the proof of Theorem~\ref{th:BKP} below for a similar argument.

\subsection{The eigencone}
\label{sec:eigencone}
A relationship between $\lgss$ and projections of coadjoint orbits
was discovered by Heckman \cite{Heck}. Theorem~\ref{th:KN} below 
interprets $\lgss$ in terms of 
eigenvalues.

Fix a maximal compact subgroup $U$ of $G$ such that $T\cap U$ is a Cartan subgroup of $U$.
Let $\lu$ and $\lt$ denote the Lie algebras of $U$ and $T$ respectively.
Let $\lt^+$ be the Weyl chamber of $\lt$ corresponding to $B$.
Let $\sqrt{-1}$ denote the usual complex number.
It is well-known that $\sqrt{-1}\lt^+$ is contained in $\lu$ and that the map
\begin{equation}
\begin{array}{ccc}
  \lt^+&\longto&\lu/U\\
\xi&\longmapsto&U.(\sqrt{-1}\xi)
\end{array}
\label{eq:homeospec}
\end{equation}
is a homeomorphism.
Here $U$ acts on $\lu$ by the adjoint action.
Consider the set
$$
\Gamma(U):=\{(\xi,\zeta,\eta)\in(\lt^+)^3\,:\,
U.(\sqrt{-1}\xi)+U.(\sqrt{-1}\zeta)+U.(\sqrt{-1}\eta)\ni 0\}.
$$

Let $\lu^*$ (resp. $\lt^*$) denote the dual (resp. complex dual) of $\lu$ (resp. $\lt$).
Let $\lt^{*+}$ denote the dominant chamber of $\lt^*$ corresponding to $B$.
By taking the tangent map at the identity, one can embed $X(T)^+$ in $\lt^{*+}$.
Note that this embedding induces a rational structure on the complex vector space $\lt^*$.
Moreover it allows to embed the tensor cone $\lgss$  in $(\lt^{*+})^3$.

The Cartan-Killing form allows to identify $\lt^+$ and $\lt^{*+}$.
In particular $\Gamma(U)$ also embeds in $(\lt^{*+})^3$; the so
obtained subset of $(\lt^{*+})^3$ is
denoted by $\tilde\Gamma(U)$ to avoid any confusion. The following result is
well-known; see \emph{e.g.}, \cite[Theorem~5]{Kumar:SurveyTransformation} 
and the references therein.

\begin{theorem}
\label{th:KN}
The set $\Gamma(U)$ is a closed convex  polyhedral cone. 
Moreover, $\lgss$ is the set of  the rational points in $\tilde\Gamma(U)$. 
\end{theorem}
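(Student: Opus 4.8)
The plan is to identify $\tilde\Gamma(U)$ with the rational polyhedral cone $\lgss$ by routing through Geometric Invariant Theory, and then to read off the structural assertions from this identification. I dispose of the easy parts first. That $\Gamma(U)$ is a cone is immediate from scaling the representatives $a,b,c\in\lu$ of the three orbits. For closedness, let $s\colon\lu\to\lt^+$ be the spectrum map sending $x$ to the unique $\xi\in\lt^+$ with $U.x=U.(\sqrt{-1}\xi)$; by \eqref{eq:homeospec} it is continuous, and it is norm-preserving for a fixed $U$-invariant inner product. Since $\Gamma(U)$ is the image under $(s,s,s)$ of the closed subspace $\{(a,b,c)\in\lu^3\colon a+b+c=0\}$, any sequence in $\Gamma(U)$ converging to a point lifts to a norm-bounded sequence of null-sum triples, which subconverges; the limit triple exhibits the limit point in $\Gamma(U)$. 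Hence $\Gamma(U)$, and so $\tilde\Gamma(U)$, is a closed cone.

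The core is the dictionary between integral points of $\tilde\Gamma(U)$ and $\lgs$, up to saturation. Fix dominant weights $\lambda,\mu,\nu$ and let $X=G/P_\lambda\times G/P_\mu\times G/P_\nu$ carry the $G$-linearized bundle $\Li_{(\lambda,\mu,\nu)}$. By the Borel--Weil identification \eqref{eqn:April30BorelWeil} and the discussion of Section~\ref{subsec:tensorsemi}, $\Li_{(\lambda,\mu,\nu)}\in\gitss(G,X)$ if and only if $(\lambda,\mu,\nu)\in\lgss$. On the other hand, as a K\"ahler $U$-manifold $(X,\Li_{(\lambda,\mu,\nu)})$ is the product of coadjoint orbits $\mathcal{O}_\lambda\times\mathcal{O}_\mu\times\mathcal{O}_\nu$ with their Kirillov--Kostant--Souriau forms, and, under the Cartan--Killing identification $\lu\cong\lu^*$ (modulo the harmless duality bookkeeping coming from the stars in \eqref{eqn:April30BorelWeil}, which does not affect whether $0$ lies in a sum of orbits), the $U$-moment map is the sum map $(a,b,c)\mapsto a+b+c$. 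The Kempf--Ness theorem identifies the GIT quotient $X/\!\!/G$ with the symplectic reduction $\Phi^{-1}(0)/U$; in particular one is empty exactly when the other is, so $\Li_{(\lambda,\mu,\nu)}\in\gitss(G,X)$ iff $0\in\mathcal{O}_\lambda+\mathcal{O}_\mu+\mathcal{O}_\nu$, i.e.\ iff $(\lambda,\mu,\nu)\in\Gamma(U)$. Combining, the integral dominant points of $\tilde\Gamma(U)$ are exactly the integral points of $\lgss$, and scaling (both sides are cones) promotes this to: the rational points of $\tilde\Gamma(U)$ coincide with $\lgss$.

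It remains to upgrade this to $\tilde\Gamma(U)=\lgss$, which yields convexity and polyhedrality at once. Since $\lgss$ is a rational polyhedral cone with nonempty interior, it is the closure of its set of rational points, which lie in the closed set $\tilde\Gamma(U)$; hence $\lgss\subseteq\tilde\Gamma(U)$ and $\tilde\Gamma(U)$ has nonempty interior. For the reverse inclusion I would use the necessity half of the eigenvalue inequalities: if $0=a+b+c$ with $a\in\mathcal{O}_\xi$, $b\in\mathcal{O}_\zeta$, $c\in\mathcal{O}_\eta$ for arbitrary $\xi,\zeta,\eta\in\lt^+$ (not necessarily rational), then for every well-covering pair $(C,\tau)$ the transversality / Hilbert--Mumford computation underlying Proposition~\ref{prop:coneineqplus} delivers $\scal{u\tau,\xi}+\scal{v\tau,\zeta}+\scal{w\tau,\eta}\le 0$; this argument is linear-algebraic in the orbit data and never uses rationality. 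By Proposition~\ref{prop:cone1} these finitely many rational inequalities cut out exactly $\lgss$, whence $\tilde\Gamma(U)\subseteq\lgss$. Therefore $\tilde\Gamma(U)=\lgss$ is a closed convex polyhedral cone, and its rational points are tautologically $\lgss$. Equivalently, convexity of $\Gamma(U)$ follows directly from Kirwan's nonabelian convexity theorem applied to the product of coadjoint orbits, and then polyhedrality follows from density of rational points together with the dictionary; this is the route of the references cited in the statement.

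I expect the main obstacle to be precisely the passage from ``the rational points of $\tilde\Gamma(U)$ are $\lgss$'' to ``$\tilde\Gamma(U)=\lgss$'': this step needs genuine input beyond the semigroup itself --- either Kirwan convexity, or the observation that the moment-map necessity inequalities hold verbatim for irrational orbit configurations (so that the rational polyhedral cone they cut out contains all of $\Gamma(U)$). The remaining ingredients --- closedness of $\Gamma(U)$ and the Borel--Weil/Kempf--Ness correspondence --- are routine once the normalizations and the self-duality conventions are pinned down.
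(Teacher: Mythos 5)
Your argument is the standard Kempf--Ness/moment-map proof from the literature; note that the paper itself does not prove Theorem~\ref{th:KN} but cites it as well known via \cite{Kumar:SurveyTransformation}, whose references follow precisely the route you lay out (Borel--Weil plus Kempf--Ness to match invariant sections with $0\in\cO_\lambda+\cO_\mu+\cO_\nu$, and nonabelian convexity to promote the statement from rational points to the full cone). Your closedness argument for $\Gamma(U)$ via the norm-preserving spectrum map is clean and correct, and you have accurately identified the only nonroutine step, namely passing from the rational dictionary to $\tilde\Gamma(U)=\lgss$, together with the two standard ways (Kirwan convexity, or the rationality-free necessity of the moment-map inequalities) to close it.
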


\section{The case of the symplectic group}
\subsection{The root system of type $C$}
\label{sec:rootC}

Let $V={\mathbb C}^{2n}$
with the standard basis $(\vec e_1,\vec e_2,\ldots,\vec e_{2n})$. Let $J_n$ be the $n\times n$ ``anti-diagonal'' identity matrix and define
a skew-symmetric bilinear form $\omega(\bullet,\bullet): V\times V\to {\mathbb C}$ using the block matrix 
$\Omega:=\left[\begin{matrix} 0 & J_n \\ -J_n & 0 \end{matrix}\right]$. By definition, the \emph{symplectic group} 
$G=\Sp(2n,\CC)$ is the group of
automorphisms of $V$ that preserve this bilinear form.

  Given a $n\times n$ matrix $A=(A_{ij})_{1\leq i,j\leq n}$ define
  ${}^TA$ by $({}^TA)=A_{n+1-j\,n+1-i}$, obtained from $A$ by
  reflection across the antidiagonal. The Lie
  algebra $\sp(2n,\CC)$ is the set of matrices $M\in {\sf Mat}_{2n\times 2n}(\CC)$
  such that ${}^tM\Omega+\Omega M=0$; namely,
  \begin{equation}
    \label{eq:sp2n}
    \sp(2n,\CC)=
    \left\{
    \begin{pmatrix}
      A&B\\
C&-{}^TA
    \end{pmatrix}
:
\begin{array}{l}
A,B,C \mbox{ of size }n\times n,\\
{}^TB=B
\mbox{ and } {}^TC=C
\end{array}
\right\}
  \end{equation}
which has complex dimension $2n^2+n$.
The Lie algebra $\lu(2n,\CC)$ of the unitary group $U(2n,\CC)$ is the
set of anti-Hermitian matrices. Thus, \eqref{eq:sp2n} gives
\begin{equation}
\sp(2n,\CC)\cap \lu(2n,\CC)=
\left\{
    \begin{pmatrix}
      A&B\\
 -{}^t\bar B&-{}^TA
    \end{pmatrix} : {}^t\bar A=-A
\mbox{ and } {}^TB=B
\right\}
\label{eq:sp2nu}
\end{equation}
which has real dimension $2n^2+n$.
As a consequence, $U(2n)\cap \Sp(2n,\CC)$ is a maximal compact subgroup
of $\Sp(2n,\CC)$.

 Let $B$ be the Borel subgroup of upper triangular matrices in $G$. 
Let
\[T=\{\diag(t_1,\ldots,t_{n},t_{n}^{-1},\ldots,t_1^{-1})\,:\,t_i\in\CC^*\}\]
be the maximal torus contained in $B$.
For $i\in [1,n]$, let $\varepsilon_i$ denote the character of $T$ that maps 
$\diag(t_1,\ldots,t_{n},t_{n}^{-1},\ldots,t_1^{-1})$ to $t_i$; then 
$X(T)=\oplus_{i=1}^n\ZZ\varepsilon_i$.
Here
$$
\begin{array}{l}
  \Phi^+=\{\varepsilon_i\pm\varepsilon_j\,:\,1\leq i<j\leq n\}\cup 
\{2\varepsilon_i\,:\,1\leq i\leq n\}, \\
\Delta=\{\alpha_1=\varepsilon_1-\varepsilon_2,\,\alpha_2=\varepsilon_2-\varepsilon_3,\ldots,\,
\alpha_{n-1}=\varepsilon_{n-1}-\varepsilon_{n},\,\alpha_{n}=2\varepsilon_{n}\},
  {\rm\ and}\\
X(T)^+=\{\sum_{i=1}^{n}\lambda_i \varepsilon_i\,:\,
  \lambda_1\geq\cdots \geq \lambda_{n}\geq 0\}=\Par_n.
\end{array}
$$
For $i\in [1,2n]$, set $\overline{i}=2n+1-i$.
The Weyl group $W$ of $G$ may be identified with a subgroup of the Weyl group $S_{2n}$ of
$\SL(V)$. More precisely,
$$
W=\{w\in S_{2n}\,:\,w(\overline{i})=\overline{w(i)} \ \ \forall i\in [1,2n]\}.
$$

Observe that $T\cap U(2n,\CC)$ has real dimension $n$ and is a maximal
torus of  $U(2n)\cap \Sp(2n,\CC)$. 
The bijection~\eqref{eq:homeospec} implies that any matrix $M_1=\begin{pmatrix}
      A&B\\
 -{}^t\bar B&-{}^TA
    \end{pmatrix}$ in $\sp(2n,\CC)\cap \lu(2n,\CC)$
    (see~\eqref{eq:sp2nu}) is diagonalizable with eigenvalues in
    $\sqrt{-1}\RR$. Moreover, ordering the eigenvalues by
    nonincreasing order, we get 
    $$
\lambda(\sqrt{-1}M_1)\in\{(\lambda_1\geq\cdots\geq\lambda_n\geq
-\lambda_n\geq\cdots\geq-\lambda_1)\,:\,\lambda_i\in\RR\}.
$$ 
For
$\lambda=(\lambda_1,\dots,\lambda_n)\in\Par_n$, 
set $\hat\lambda=(\lambda_1,\dots,\lambda_n,-\lambda_n,\dots,-\lambda_1)$. 
Now, Theorems~\ref{th:cones=} with $m=n$ and Theorem~\ref{th:KN} give an
interpretation of $\nlss(n)$ in terms of eigenvalues:

\begin{proposition}
\label{prop:eigenvaluedesc}
  Let $\lambda,\mu,\nu\in\Par_n$. Then $(\lambda,\mu,\nu)\in \nlss(n)$ if and only if there exist three matrices $M_1,M_2,M_3\in
  \sp(2n,\CC)\cap \lu(2n,\CC)$ such that $M_1+M_2+M_3=0$ and
$$
(\hat\lambda,\hat\mu,\hat\nu)=(\lambda(\sqrt{-1}M_1),
\lambda(\sqrt{-1}M_2), \lambda(\sqrt{-1}M_3)).
$$
\end{proposition}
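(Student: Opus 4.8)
The plan is to deduce Proposition~\ref{prop:eigenvaluedesc} by combining three facts already available: (i) Theorem~\ref{th:cones=} with $m=n$, which gives $\nlss(n)=\spss(n)\cap(\Par_n^\QQ)^3$, so in particular $\nlss(n)$ is the set of rational points of the real cone $\spss(n)_\RR$; (ii) Theorem~\ref{th:KN} applied to $\lg=\sp(2n,\CC)$, which identifies (the rational points of) $\spss(n)$ with the eigencone $\tilde\Gamma(U)$ for $U=U(2n)\cap\Sp(2n,\CC)$; and (iii) the explicit description of $U$, its maximal torus $T\cap U(2n,\CC)$, and the Weyl chamber $\lt^+$ worked out in Section~\ref{sec:rootC}, which lets us make the homeomorphism \eqref{eq:homeospec} completely concrete via the map $\lambda\mapsto\hat\lambda$.

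First I would record that $\lu:=\sp(2n,\CC)\cap\lu(2n,\CC)$ is the Lie algebra of $U$, as established around \eqref{eq:sp2nu}, and that every $M_1\in\lu$ is diagonalizable with purely imaginary spectrum, so $\sqrt{-1}M_1$ is Hermitian with real spectrum; ordering eigenvalues decreasingly gives the element $\lambda(\sqrt{-1}M_1)\in\{(\lambda_1\geq\cdots\geq\lambda_n\geq-\lambda_n\geq\cdots\geq-\lambda_1)\}$ noted in the excerpt. The key identification is that this target set, via $\xi\mapsto\sqrt{-1}\xi$, is precisely $\sqrt{-1}\lt^+$ inside $\lu$, and the orbit map $\xi\mapsto U\cdot(\sqrt{-1}\xi)$ of \eqref{eq:homeospec} is a homeomorphism $\lt^+\to\lu/U$. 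Under the Cartan–Killing identification of $\lt^+$ with $\lt^{*+}$ used to define $\tilde\Gamma(U)$, the point $\lambda\in\Par_n$ corresponds (by the description $X(T)^+=\Par_n$ in Section~\ref{sec:rootC}) exactly to the orbit of $\sqrt{-1}\hat\lambda$. Hence: there exist $M_1,M_2,M_3\in\lu$ with $M_1+M_2+M_3=0$ and $\lambda(\sqrt{-1}M_i)$ the prescribed triple $(\hat\lambda,\hat\mu,\hat\nu)$ if and only if $U\cdot(\sqrt{-1}\xi_\lambda)+U\cdot(\sqrt{-1}\xi_\mu)+U\cdot(\sqrt{-1}\xi_\nu)\ni 0$, i.e. if and only if $(\lambda,\mu,\nu)\in\tilde\Gamma(U)$.

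Now I would chain the equivalences: $(\lambda,\mu,\nu)\in\nlss(n)$ iff $(\lambda,\mu,\nu)\in\spss(n)$ (by Theorem~\ref{th:cones=} with $m=n$, noting $\lambda,\mu,\nu\in\Par_n$), iff $(\lambda,\mu,\nu)$ is a rational point of $\tilde\Gamma(U)$ (by Theorem~\ref{th:KN}), iff the eigenvalue condition holds (by the identification of the previous paragraph). Since $(\lambda,\mu,\nu)$ is assumed in $(\Par_n)^3\subset(\Par_n^\QQ)^3$, rationality is automatic and there is no loss passing between $\spss(n)$ and its rational points. One small bookkeeping point: I should check the self-duality convention $V(\nu)^*\simeq V(\nu)$ for $\sp(2n,\CC)$ makes the $\sp$-semigroup of the introduction literally equal to $\lgs$ for $\lg=\sp(2n,\CC)$, which is already remarked in Section~\ref{subsec:tensorsemi}, and that the $0\in$-orbit-sum formulation of $\Gamma(U)$ matches the $A+B=C$-style phrasing after negating $M_3$ if desired (here the statement is already in the symmetric $M_1+M_2+M_3=0$ form, so nothing is needed).

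The main obstacle is purely one of making the abstract homeomorphism \eqref{eq:homeospec} and the Cartan–Killing identification $\lt^+\cong\lt^{*+}$ concrete enough to see that a dominant weight $\lambda\in\Par_n$ maps to the orbit of the explicit Hermitian matrix with spectrum $\hat\lambda$; everything else is a formal concatenation of cited theorems. Concretely this amounts to checking that, under the chosen trace form on $\sp(2n,\CC)$ restricted to $\lt$, the coordinates $\varepsilon_i$ read off eigenvalues in positions $1,\dots,n$ and the complementary eigenvalues are forced to be $-\lambda_n,\dots,-\lambda_1$ by membership in $\sp(2n,\CC)$ — which is exactly the content of the paragraph preceding the proposition. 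So the proof is short: assemble (i)–(iii), verify this normalization, and conclude.
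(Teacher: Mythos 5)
Your proposal is correct and follows the same route as the paper: the paper gives no separate proof for this proposition, stating only that it follows from Theorem~\ref{th:cones=} with $m=n$ and Theorem~\ref{th:KN}, with the needed explicit identification of $\sqrt{-1}\lt^+$ inside $\lu=\sp(2n,\CC)\cap\lu(2n,\CC)$ via the spectrum map $\lambda\mapsto\hat\lambda$ already worked out in the paragraph preceding the statement. You have simply spelled out that same chain of identifications in more detail.
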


\subsection{Isotropic Grassmannians and Schubert classes}
\label{sec:SchubSp}

Our reference for this subsection is  \cite[Section~5]{algo}.
For $r=1,\dots,n$, the one-parameter subgroup $\varpi_{\alpha_r^\vee}$
is given by 
\[\varpi_{\alpha_r^\vee}
(t)=\diag(t,\dots,t,1,\dots,1,t\inv,\dots,t\inv),\] 
where $t$ and
$t\inv$ occur $r$ times. 

A subspace $W\subseteq V$ is \emph{isotropic} if for all $\vec v, \vec v'\in W$, $\omega(\vec v, \vec v')=0$.  Given an $r$-subset  $I\subset[2n]$, we set $F_I=\Span(\vec e_i:i\in I)$. Clearly, 
$F_I$ is isotropic if and only if $I\cap\bar I=\emptyset$, where 
$\bar I=\{\bar i\,:\,i\in I\}$. Now, $P^{\alpha_r}$ is the stabilizer of the
isotropic subspace $F_{\{1,\dots,r\}}$. Thus, 
$G/P^{\alpha_r}=\Gr_\omega(r,2n)$ is the \emph{Grassmannian of isotropic
$r$-dimensional vector subspaces of $V$}. 

Let $\Part(r,2n)$ denote the set of subsets of $\{1,\dots,2n\}$ with
$r$ elements.
Set 
$$
\Schub(\Gr_\omega(r,2n)):=\{I\in\Part(r,2n)\,:\, I\cap\overline{I}=\emptyset\}.
$$
If $I=\{i_1<\cdots<i_r\}\in \Schub(\Gr_\omega(r,2n))$,  let
$i_{\overline{k}}:=\overline{i_k}$ for $k\in [r]$, and  
$\{i_{r+1}<\cdots<i_{\overline{r+1}}\}=[2n]-(I\cup \overline{I})$. 
Therefore $w_I=(i_1,\dots,i_{2n})\in S_{2n}$ is 
the element of $W^{P^{\alpha_r}}$ corresponding to $F_I$; that is, $F_I=w_I P^{\alpha_r}/P^{\alpha_r}$.

Set
$$
\Schub'(\Gr_\omega(r,2n)):=
\left \{
  \begin{array}{ll}
(A,A')\,:\, &A\in \Part(a,n),\,
A'\in \Part(a',n)\mbox{ for some $a$ and $a'$ s.t. }\\
&a+a'=r\mbox{
  and } A\cap A'=\emptyset
  \end{array}
\right \}.
$$
This map is a bijection: 
\begin{equation}
\begin{array}{ccl}
  \Schub(\Gr_\omega(r,2n))&\longto&\Schub'(\Gr_\omega(r,2n))\\
I&\longmapsto&(\bar I\cap[n],I\cap[n]).
\end{array}\label{eq:bijIA}
\end{equation}
Recall from the introduction the definition of $\tau(I)$ and hence
$\tau(A)$ and $\tau(A')$. The relationship between these three partitions
is depicted in Figure~\ref{fig:tauIA}.

\begin{figure}
    \begin{center}
    \begin{tikzpicture}[scale=0.7]
        \draw[black,thick] (0,0) -- (9,0) -- (9,5) -- (0,5) -- (0,0);
        \draw[black, thick] (4,0) -- (4,5);
        \draw[black, thick] (0,3) -- (9,3);   
        \draw (0, 1.5) node[left]{$a'$};
        \draw (0, 4) node[left]{$a$};
        \draw (4, 5) node[above]{$2n-r$};
        \draw (2, 0) node[below]{$n-a'$};
        \draw (6.5, 0) node[below]{$n-a$};
        \draw[black,fill=gray!30] (0,0) -- (1,0) -- (1,0.5) -- (2.5,0.5) --
        (2.5,2.5) --  (3,2.5) -- (3,3) -- (4,3) -- (0,3) -- (0,0);
 
        \draw[black,fill=gray!30] (4,3) -- (4,3.5) -- (6,3.5) -- (6,4) -- (8,4) --
        (8,5) -- (9,5) -- (9,3) -- (4,3);
\fill [pattern=north east lines]  (0,0) -- (1,0) -- (1,0.5) -- (2.5,0.5) --
        (2.5,2.5) --  (3,2.5) -- (3,3) -- (4,3) -- (4,3.5) -- (6,3.5) -- (6,4) -- (8,4) --
        (8,5) -- (9,5) -- (0,5)  -- (0,0);
        \draw (1.5,2) node[fill=white]{$\tau(A')$};
        \draw (7,3.5) node[fill=white]{$\tau(A)$};

\draw[black,fill=gray!30] (11,4) -- (11,5) -- (12,5) -- (12,4) -- cycle;
\draw  (12,4.5)  node[right] {$\tau(A)$ rotated $180^\circ$};
\draw (11,2.5) -- (11,3.5) -- (12,3.5) -- (12,2.5) -- cycle;
\fill [pattern=north east lines] (11,2.5) -- (11,3.5) -- (12,3.5) -- (12,2.5) -- cycle;
\draw  (12,3)  node[right] {$\tau(I)$};
\draw[black,fill=gray!30] (11,1) -- (11,2) -- (12,2) -- (12,1) --
cycle;
\fill [pattern=north east lines] (11,1) -- (11,2) -- (12,2) -- (12,1) --
cycle;
\draw  (12,1.5)  node[right] {$\tau(A')$};
    \end{tikzpicture}
\end{center}
    \caption{$\tau(I)$, $\tau(A)$ and $\tau(A')$.}
    \label{fig:tauIA}
\end{figure}
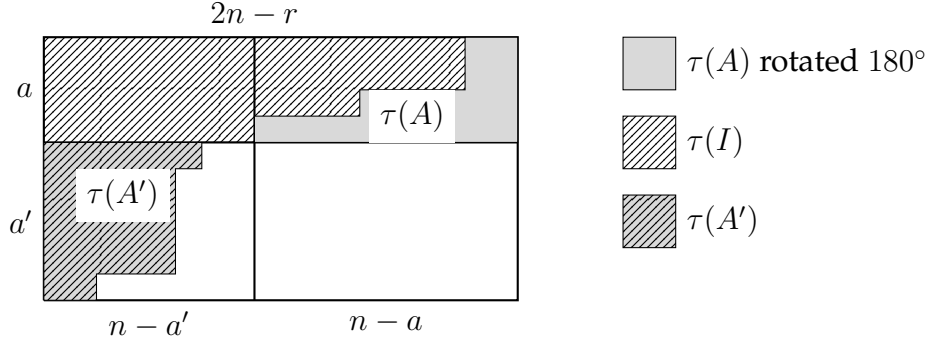

\begin{definition}
For subsets $X\subseteq Y \subseteq [2n]$, define
\begin{align*}
    \chi_{(X,Y)}:&[2n] \rightarrow  \{0,1,2\}\\
    &i \mapsto  \begin{cases} 1 & \text{if }i\in X  \\ 
    2 & \text{if } i\in Y\setminus X\\
    0 & \text{if } i \notin Y
    \end{cases}
\end{align*}
\end{definition}

Treat $\chi_{(X,Y)}$ as a word of length $2n$ with letters in
$\{0,1,2\}$. Removing all the $2$'s in the word, we obtain the
characteristic function $\chi_{(X,Y)}^{2}$ of some subset $I^2$ of
$[2n-|Y|+|X|]$
with $|X|$ elements. Similarly, removing all $0$'s in the word, and then, replacing all the
$2$'s with $0$'s, gives the characteristic function
$\chi_{(X,Y)}^{0}$ of some subset $I^0$ of $[|Y|]$ with $|X|$ elements.

Given $I\in \Schub(\Gr_\omega(r,2n))$ for some $1\leq r\leq n$, we
apply this construction to $I\subset ([2n]-\bar I)$. 
One obtains $I^2\in\Part(r,2r)$ and $I^0\in\Part(r,2n-r)$.

\begin{definition}
For a partition $\lambda = (\lambda_1,\ldots,\lambda_k) \subseteq (a^b)$, \emph{i.e.}, the rectangle with $a$ columns and $b$ rows. Define $\lambda^\vee$ \emph{with respect to $(a^b)$} to be the partition $(a-\lambda_b,a-\lambda_{b-1},\ldots,a-\lambda_1)$ where we set $\lambda_i = 0$ for $i>k$. We will denote this by $\lambda^{\vee[a^b]}$.
\end{definition}

Now,
\begin{equation}
  \label{eq:dimSchub}
  \codim(\overline{BF_I})= |\tau(I^0)^{\vee[(2n-2r)^r]}| + 1/2(|\tau(I^2)^{\vee[r^r]}| + |I\cap [n]|).
\end{equation}
Moreover,
\begin{equation}
  \label{eq:dimGr}
  \dim(\Gr_\omega(r,2n))= r(2n-2r)+\frac{r(r+1)}{2}.
  \end{equation}

Let $I\in \Schub(\Gr_\omega(r,2n))$ and $A=\bar I\cap[n]$,
$A'=I\cap[n]$ be the corresponding pair in
$\Schub'(\Gr_\omega(r,2n))$. 
Set 
\begin{equation}
  \tau^0(A,A')=\tau(I^0),\qquad
\tau^2(A,A')=\tau(I^2).
\label{eq:deftauA}
\end{equation}
For later use observe that 
\begin{equation}
  \label{eq:tauI02}
  \tau(I)=\tau(I^0)+\tau(I^2).
\end{equation}

While the above discussion defines $\tau^0(A,A'), \tau^2(A,A')$ through the bijection
\eqref{eq:bijIA}, we emphasize that these partitions from Theorem~\ref{th:minlistineq} can be defined explicitly:

\begin{deflemma}
\label{lem:tau02A}
  Set $a=|A|$ and $a'=|A'|$. Write $A=\{\alpha_1<\cdots<\alpha_a\}$ and
  $A'=\{\alpha'_1<\cdots<\alpha'_{a'}\}$.
Then
$$
\begin{array}{ll}
\tau^2(A,A')_k=a+|A'\cap [\alpha_k;n]|&\forall k=1,\dots,a;\\
  \tau^2(A,A')_{l+a}=|A\cap [\alpha'_{a'+1-l}]|&\forall l=1,\dots,a';\\
\tau^0(A,A')_k=n-a-a'+|[\alpha_k;n] - (A\cup A')| &\forall k=1,\dots,a;\\
  \tau^0(A,A')_{l+a}=|[\alpha'_{a'+1-l}]- (A\cup A')|&\forall l=1,\dots,a'.
\end{array}
$$
\end{deflemma}

\begin{proof}
Write $I=A'\cup \bar A=\{i_1<\cdots<i_r\}$ with $r=a+a'$.
By definition,
\[\tau^2(A,A')_k:=\tau(I^2)_k=|\bar I\cap
[i_{a+a'+1-k}]|, \text{ \ for $1\leq k\leq a+a'$}.\] 
If $k\leq a$, $i_{a+a'+1-k}\in \bar A\subset [n+1;2n]$,  
$i_{a+a'+1-k}=\overline{\alpha_k}$ and $\bar I\cap
[i_{a+a'+1-k}]=A\cup \overline{A'\cap[\alpha_k;n]}$. The first
assertion follows.

If $k=a+l$ for some positive $l$, $i_{a+a'+1-k}\in A'\subseteq [n]$,  
$i_{a+a'+1-k}=\alpha'_{a'+1-l}$ and $\bar I\cap
[i_{a+a'+1-k}]=A\cap[\alpha'_{a'+1-l}]$.

Similarly, 
$$
\tau^0(A,A')_k:=\tau(I^0)_k=|[i_{a+a'+1-k}]\cap ([2n]-(I\cup\bar I)|, 
\text{\ for $1\leq k\leq a+a'$.}
$$ 
If $k\leq a$, $[i_{a+a'+1-k}]\cap ([2n]-(I\cup\bar I))=
([n]-(A\cup A'))\cup \overline{[\alpha_k;n]-(A\cup A')}$ (a
disjoint union).  This proves the third claim.

If $k=a+l$ with some positive $l$, $\alpha'_{a'+1-l}=i_{a+a'+1-k}\in  [n]$; last assertion follows.
\end{proof}

\subsection{The parabolic subgroup $P_0$}\label{subsec:P0}
Fix $m\geq n$.
Let $P_0$ be the subgroup of $\Sp(2m,\CC)$ of matrices
\begin{equation}
\begin{pmatrix}
  T_1&*&*\\
0&A&*\\
0&0&T_2
\end{pmatrix},\label{eq:defP0}
\end{equation}
where $T_1$ and $T_2$ are $n\times n$ upper triangular matrices and $A$ is a 
matrix in $\Sp(2m-2n,\CC)$. $P_0$ is the standard parabolic subgroup of $\Sp(2m,\CC)$ corresponding to the simple roots
$\{\alpha_{n+1},\dots,\alpha_{m}\}$.
A character $\lambda=\sum_{i=1}^m\lambda_i\varepsilon_i\in X(T)$
extends to $P_0$ if and only if $\lambda_{n+1}=\cdots=\lambda_m=0$.
Thus the set of dominant characters of $X(P_0)$ identifies with $\Par_n$. Hence
\begin{equation}
  \label{eq:7}
  \spss(m)\cap(\Par_n^\QQ)^3=\gitss(\Sp(2m,\CC),(\Sp(2m,\CC)/P_0)^3).
\end{equation}

Let $\Schub^{P_0}(\Gr_\omega(r,2m))$ be the set of  $I\in\Schub(\Gr_\omega(r,2m))$  such that the Schubert variety $\overline{BF_I}$ is $P_0$-stable. One checks (details omitted) that
\begin{equation}
\label{eqn:P0stablechar}
I\in \Schub^{P_0}(\Gr_\omega(r,2m)) \iff I\cap[n+1;2m-n]=[k;2m-n]
\text{ \ for some $k\geq m+1$.}
\end{equation}

\section{Proof of Theorems~\ref{th:cones=} and~\ref{th:minlistineq}}
\label{sec:cones=}

\begin{proposition}
\label{prop:Jul2aaa}
The inequalities~\eqref{Ineq:ABC} in Theorem~\ref{th:minlistineq} characterize
$\spss(n)$.
\end{proposition}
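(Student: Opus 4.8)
The plan is to deduce Proposition~\ref{prop:Jul2aaa} directly from the Belkale--Kumar/Ressayre description of $\gitss(\Sp(2n,\CC),(\Sp(2n,\CC)/B)^3)$ recorded in Theorem~\ref{th:BK}, by turning its cohomological and numerical hypotheses into conditions (1)--(3) of Theorem~\ref{th:minlistineq}. First, taking $m=n$ in \eqref{eq:7} makes $P_0=B$, so $\spss(n)=\gitss(\Sp(2n,\CC),(\Sp(2n,\CC)/B)^3)$; by \eqref{eqn:April30BorelWeil} this is the $\sp(2n,\CC)$-tensor cone in the sense of Section~\ref{subsec:tensorsemi}. Since the dominant characters of $T$ are exactly $\Par_n$, every $(\lambda,\mu,\nu)\in(\Par_n)^3$ is dominant in the sense of Theorem~\ref{th:BK}. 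Hence $(\lambda,\mu,\nu)\in\spss(n)$ if and only if, for every simple root $\alpha_r$ ($1\le r\le n$) and all $u=w_{I_1},v=w_{I_2},w=w_{I_3}\in W^{P^{\alpha_r}}$ with $[\overline{BF_{I_1}}]\odot_0[\overline{BF_{I_2}}]\odot_0[\overline{BF_{I_3}}]=[pt]$ in $\Ho^*(\Gr_\omega(r,2n),\ZZ)$, one has $\langle u\varpi_{\alpha_r^\vee},\lambda\rangle+\langle v\varpi_{\alpha_r^\vee},\mu\rangle+\langle w\varpi_{\alpha_r^\vee},\nu\rangle\le 0$.

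Next I make these data combinatorial using Section~\ref{sec:SchubSp}. Via $u=w_I\leftrightarrow I$ and the bijection \eqref{eq:bijIA}, each $I$ becomes a pair $(A,A')$ with $A\cap A'=\emptyset$ and $|A|+|A'|=r$; so condition~(1) of Theorem~\ref{th:minlistineq} and the equalities $|A|+|A'|=|B|+|B'|=|C|+|C'|=r$ just record that $u,v,w$ all lie in $W^{P^{\alpha_r}}$ for one common $r$. A direct computation with $\varpi_{\alpha_r^\vee}(t)=\diag(t,\dots,t,1,\dots,1,t\inv,\dots,t\inv)$ (after conjugation the $t$'s sit in positions $I$ and the $t\inv$'s in positions $\overline I$) gives
\[
w_I\,\varpi_{\alpha_r^\vee}=\sum_{i\in A'}\varepsilon_i^\vee-\sum_{i\in A}\varepsilon_i^\vee,
\qquad\text{whence}\qquad
\langle w_I\,\varpi_{\alpha_r^\vee},\lambda\rangle=|\lambda_{A'}|-|\lambda_A|.
\]
Writing the three pairs as $(A,A'),(B,B'),(C,C')$, the Belkale--Kumar inequality thus becomes $|\lambda_{A'}|-|\lambda_A|+|\mu_{B'}|-|\mu_B|+|\nu_{C'}|-|\nu_C|\le 0$, i.e.\ exactly \eqref{Ineq:ABC}.

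It remains to match $[\overline{BF_{I_1}}]\odot_0[\overline{BF_{I_2}}]\odot_0[\overline{BF_{I_3}}]=[pt]$ with ``$|A'|+|B'|+|C'|=r$ together with condition~(3)''. This is the crux, and here we invoke \cite[Section~5]{algo}: on the symplectic Grassmannian $\Gr_\omega(r,2n)$ a point-class structure constant for $\odot_0$ factors. Concretely, combining the codimension formula \eqref{eq:dimSchub}, the dimension \eqref{eq:dimGr}, and the description of $\odot_0$ from \cite{algo}, the identity $[\overline{BF_{I_1}}]\odot_0[\overline{BF_{I_2}}]\odot_0[\overline{BF_{I_3}}]=[pt]$ holds if and only if the bookkeeping of the ``$\tfrac12|I\cap[n]|$'' summands forces $|A'|+|B'|+|C'|=r$, and the two remaining obstructions are that the ordinary $\SL$ Littlewood--Richardson numbers $c_{\tau^0(A,A')^{\vee[(2n-2r)^r]}\;\tau^0(B,B')^{\vee[(2n-2r)^r]}}^{\tau^0(C,C')}$ and $c_{\tau^2(A,A')^{\vee[r^r]}\;\tau^2(B,B')^{\vee[r^r]}}^{\tau^2(C,C')}$ both equal $1$; the explicit formulas of Definition-Lemma~\ref{lem:tau02A} together with \eqref{eq:deftauA} let this be checked at the level of subsets of $[n]$. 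Assembling the three steps, the inequalities produced by Theorem~\ref{th:BK} for $\Sp(2n,\CC)$ are, under this dictionary, precisely the inequalities \eqref{Ineq:ABC} ranging over all $(A,A',B,B',C,C')$ satisfying (1)--(3), so these inequalities characterize $\spss(n)$.

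The genuinely substantial step is the third paragraph: extracting from \cite{algo} the precise statement that the point-class condition for $\odot_0$ on the (for $r<n$, non-cominuscule) symplectic Grassmannian $\Gr_\omega(r,2n)$ is equivalent to the numerical identity $|A'|+|B'|+|C'|=r$ together with two honest Littlewood--Richardson conditions, and confirming that the partitions appearing there are exactly the $\tau^0(\cdot,\cdot)^\vee$ and $\tau^2(\cdot,\cdot)^\vee$ of Theorem~\ref{th:minlistineq}. The other ingredients — the GIT reinterpretation of $\spss(n)$, the bijections of Section~\ref{sec:SchubSp}, and the one-parameter-subgroup pairing — are routine.
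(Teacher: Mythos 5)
Your proposal is correct and follows essentially the same route as the paper's own proof: pass to $\gitss(\Sp(2n,\CC),(\Sp(2n,\CC)/B)^3)$ via Theorem~\ref{th:BK}, translate the pairing $\langle w_I\varpi_{\alpha_r^\vee},\lambda\rangle$ into $|\lambda_{A'}|-|\lambda_A|$ using the bijection \eqref{eq:bijIA}, and then rewrite the $\odot_0$-point-class condition on $\Gr_\omega(r,2n)$ as (i) the codimension-sum equality (which, via \eqref{eq:dimSchub} and \eqref{eq:dimGr}, is equivalent to $|A'|+|B'|+|C'|=r$ once the LR coefficients are $1$) and (ii) the two $\tau^0/\tau^2$ Littlewood--Richardson conditions. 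The one place to tighten is your third paragraph: the statement you describe as needing to be ``extracted from \cite[Section~5]{algo}'' is precisely \cite[Theorem~19]{algo}, which the paper cites by name; referencing it explicitly closes the gap you flagged and makes the ``bookkeeping of the $\tfrac12|I\cap[n]|$ summands'' remark a one-line consequence.
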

\begin{proof}
Since $\spss(n)=\gitss(\Sp(2n), (\Sp(2n)/B)^3$ (see Section~\ref{subsec:tensorsemi}), we may apply Theorem~\ref{th:BK}.
Let $(\lambda,\mu,\nu)\in (\Par_n)^3$. Write
$\lambda=\sum_i\lambda_i\varepsilon_i$ and similarly for $\mu$ and
$\nu$.

  Fix $1\leq r\leq n$ and $\alpha=\alpha_r\in \Delta$. Given $I\in
  \Schub(\Gr_\omega(r,2n))$, from the description of
  $\varpi_{\alpha_r^{\vee}}$ and $w_I$ it is easy to check that
  \begin{equation}
  \label{eqn:ineqconversion}
  \scal{w_I\varpi_{\alpha_r^\vee},\lambda}=\sum_{i\in I\cap[n]}\lambda_i-\sum_{i\in\bar
    I\cap[n]}\lambda_i
    \end{equation}
Then \eqref{Ineq:ABC} is obtained from \eqref{eq:BKineqGB}
associated to the triple of Schubert classes $(I,J,K)\in
\Schub(\Gr_\omega(r,2n))^3$ by setting 
\begin{align*}
    A = \bar{I}\cap[n],\ & A' = I\cap[n];\\
    B = \bar{J}\cap[n],\ & B' = J\cap[n];\\
    C = \bar{K}\cap[n],\ & C' = K\cap[n].
\end{align*}
Since the map \eqref{eq:bijIA} is bijective, it suffices to show \eqref{eq:BKcohomGB}
from Theorem~\ref{th:BK} is equivalent to 
\begin{enumerate}
\item $|A'|+|B'|+|C'|=r$, and
\item \label{cond:c1proof}
$c_{\tau^0(A,A')^{\vee[(2n-2r)^r]}, 
    \tau^0(B,B')^{\vee[(2n-2r)^r]}}^{\tau^0(C,C')}=
c_{\tau^2(A,A')^{\vee[r^r]}, \tau^2(B,B')^{\vee[r^r]}}^{\tau^2(C,C')}=1$.
\end{enumerate}

By \cite[Theorem~19]{algo}, condition~\eqref{eq:BKcohomGB} is
equivalent to 
\begin{enumerate}
\item
  $\codim(\overline{BF_I})+\codim(\overline{BF_J})+\codim(\overline{BF_K})=\dim(\Gr_\omega(r,2n))$,
  and
\item $c^{\tau(K_0)}_{\tau(I_0)^{\vee[(2n-2r)^r]}, \tau(J_0)^{\vee[(2n-2r)^r]}} = c^{\tau(K_2)}_{\tau(I_2)^{\vee[r^r]}, 
    \tau(J_2)^{\vee[r^r]}} = 1$.
\end{enumerate}
By definition, the two conditions involving
Littlewood-Richardson are the same. 
Assuming these two Littlewood-Richardson coefficients equal to one, it
remains to prove that 
$\codim(\overline{BF_I})+\codim(\overline{BF_J})+\codim(\overline{BF_K})=\dim(\Gr_\omega(r,2n))$
if and only if $|A'|+|B'|+|C'|=r$.
This directly follows from \eqref{eq:dimSchub} and \eqref{eq:dimGr}.
\end{proof}

\noindent
\emph{Proof of Theorem~\ref{th:cones=}:}
By Theorem~\ref{th:KN}, the inclusion $\spss(n)\subset \spss(m)$ is
equivalent to the inclusion $\Gamma(\Sp(2n,\CC)\cap U(2n,\CC))\subset
\Gamma(\Sp(2m,\CC)\cap U(2m,\CC))$.
Here we use the symplectic form defined in Section~\ref{sec:rootC} to embed
  $\Sp(2n,\CC)$ in $\GL(2n,\CC)$.

Clearly, the following map is well-defined
$$
\begin{array}{ccl}
  \Lie(\Sp(2n,\CC)\cap U(2n,\CC))&\longto&\Lie(\Sp(2m,\CC)\cap
                                               U(2m,\CC))\\
  M=\begin{pmatrix}
    A&B\\C&D
  \end{pmatrix}
&\longmapsto&
\tilde M=\begin{pmatrix}
    A&0&B\\0&0&0\\C&0&D
  \end{pmatrix}
\end{array}
$$
where $A$, $B$, $C$ and $D$ are square matrices of size $n$, and the matrices
of these Lie algebras are described by 
\eqref{eq:sp2nu}.

Let $(\hat h_1,\hat h_2,\hat h_3)\in \Gamma(\Sp(2n,\CC) \cap U(2n,\CC))$. 
Let 
\[(M_1,M_2,M_3)\in (\Sp(2n,\CC) \cap U(2n,\CC))^3.(\sqrt{-1}\hat h_1, \sqrt{-1}\hat h_2, \sqrt{-1}\hat h_3)\]
such that $M_1+M_2+M_3=0$.

The fact that $\tilde M_1+\tilde M_2+\tilde M_3=0$ implies that
$(\hat h_1,\hat h_2,\hat h_3)\in \Gamma(\Sp(2m,\CC) \cap U(2m,\CC))$, where
$h_1,h_2,h_3$ are viewed as elements of $\Par_{m}$ is the spectrum of
$\tilde M_1$. 

To obtain the converse inclusion
\[\gitss(\Sp(2m,\CC),(\Sp(2m,\CC)/P_0)^3)=\spss(m)\cap(\Par_n^\QQ)^3\subset
\spss(n),\] 
we have to prove that any inequality~\eqref{Ineq:ABC} from
Proposition~\ref{prop:Jul2aaa}
is satisfied by the points of $\spss(m)\cap(\Par_n^\QQ)^3$; here we have
used \eqref{eq:7}. Fix such an
inequality $(A,A',B,B',C,C')$. Set 
\[I=A'\cup \bar A\subset [2n],
J=B'\cup \bar B\subset [2n], \text{\ and $K=C'\cup \bar C\subset [2n]$.}\]
Similarly for $m$, set 
\[\tilde I=A'\cup\{2m+1-i\,:\,i\in A\}, 
\tilde J=B'\cup\{2m+1-i\,:\,i\in B\}\] 
and 
\[\tilde K=C'\cup\{2m+1-i\,:\,i\in C\};\] these
are subsets of $[2m]$.
Set also $a'=|A'|$, $b'=|B'|$ and $c=|C|$.

Notice that $(\tilde I^2)^0,(\tilde J^2)^0,(\tilde K^2)^0\subseteq 0^r = \emptyset$ . Thus, trivially,
\begin{equation}
\label{eqn:Jul1aaa}
c_{\tau((\tilde I^2)^0)^\vee, \tau((\tilde J^2)^0)^\vee}^{\tau((\tilde K^2)^0)}=c_{\emptyset,\emptyset}^{\emptyset}=1.
\end{equation}
Also, $(\tilde I^2)^2=\tilde I^2, (\tilde J^2)^2=\tilde J^2, (\tilde K^2)^2=\tilde K^2$.
Since $\tau(\tilde I^2)=\tau(I^2):=\tau^2(A,A')$, $\tau(\tilde J^2)=\tau(J^2):=\tau^2(B,B')$ and
$\tau(\tilde K^2)=\tau(K^2):=\tau^2(C,C')$, we have
\begin{equation}
\label{eqn:Jul1bbb}
c_{\tau((\tilde I^2)^2)^{\vee[r^r]}, \tau((\tilde J^2)^2)^{\vee [r^r]}}^{\tau((\tilde K^2)^2)}=
c_{\tau(\tilde I^2)^{\vee[r^r]},\tau(\tilde J^2)^{\vee [r^r]}}^{\tau(\tilde K^2)}=
c_{\tau^2(A,A')^{\vee[r^r]}\; \tau^2(B,B')^{\vee[r^r]}}^{\tau^2(C,C')}=1.
\end{equation}
We apply \cite[Theorem~8.2]{algo} to $\Gr_\omega(r,2r)$ and the triple
$\tilde I^2, \tilde J^2, \tilde K^2$. The equations (\ref{eqn:Jul1aaa}) and 
(\ref{eqn:Jul1bbb}) mean that condition (iii) of said theorem holds. Hence by 
part (ii) of \emph{ibid.},
\begin{equation}
[\overline{BF_{\tilde I^2}}]\cdot
[\overline{BF_{\tilde J^2}}]\cdot
[\overline{BF_{\tilde K^2}}]=[pt]
\in \Ho^*(\Gr_\omega(r,2r),\ZZ).
\label{eq:condLG}
\end{equation}

One can easily check that 
$$
\begin{array}{l}
  \tau(\tilde K^0)=[2(m-n)]^{c}+\tau(K^0),\\
\tau(\tilde
  I^0)^{\vee[(2m-2r)^r]}=[2(m-n)]^{a'}+\tau(I^0)^{\vee[(2n-2r)^r]},\\
\tau(\tilde J^0)^{\vee[(2m-2r)^r]}=[2(m-n)]^{b'}+\tau(J^0)^{\vee[(2n-2r)^r]}.
\end{array}
$$
The assumption $a'+b'=c$ and the semigroup property of $\lrs$ implies
that 
\begin{equation}
\label{eqn:Jul1ccc}
c_{\tau(\tilde
  I^0)^{\vee[(2m-2r)^r]}, \tau(\tilde
  J^0)^{\vee[(2m-2r)^r]}}^{\tau(\tilde K^0)}\neq 0.
  \end{equation}
Next we apply \cite[Proposition~8.1]{algo} to the $\tilde I, \tilde J, \tilde K$ and
the space $\Gr_\omega(r,2m)$; equations (\ref{eq:condLG}) and (\ref{eqn:Jul1ccc})
mean  that condition (iii) holds. Hence by (i) of \emph{ibid.} and (\ref{eqn:P0stablechar}), 
\begin{equation}
[\overline{P_0F_{\tilde I}}]\odot_0
[\overline{P_0F_{\tilde J}}]\odot_0
[\overline{P_0F_{\tilde K}}]=d[pt]
\in \Ho^*(\Gr_\omega(r,2m),\ZZ),
\label{eq:cohom}
\end{equation}
for some nonzero $d$. Now use
Proposition~\ref{prop:coneineqplus}, which 
 shows that \eqref{Ineq:ABC} is a case of (\ref{eq:domineq}) which holds
 on $\gitss(\Sp(2m,\CC),(\Sp(2m,\CC)/P_0)^3)=\spss(m)\cap(\Par_n^\QQ)^3$,
 as desired.\qed

\medskip
\noindent
\emph{Proof of Theorem~\ref{th:minlistineq}:}
This follows from Theorem~\ref{th:cones=} and Proposition~\ref{prop:Jul2aaa}. \qed

\begin{example}\label{exa:running}
Let $n=4, r=3$. Let
$$
A=B'=C'=\emptyset,\ 
A'=\{2,3,4\}, B=\{1,2,4\}, C=\{1,3,4\},
$$
giving a triple $((A,A'),(B,B'),(C,C'))$ in
$(\Schub'(\Gr_\omega(3,8)))^3$ satisfying conditions (1) and (2) from
Theorem~\ref{th:minlistineq}. The corresponding triple in $\Schub(\Gr_\omega(3,8))$ is
\[I=\{2, 3, 4\}, J=\{5, 7, 8\}, K=\{5, 6, 8\} \subseteq [8].\]
Thus 
\[\tau(I)=(1,1,1), \tau(J)=(5,5,4), \tau(K)=(5,4,4)\subseteq (5^3);\]
The three associated characteristic functions
$\chi_{I\subset[2n]-\bar I}, \chi_{J\subset[2n]-\bar J}, \chi_{K\subset[2n]-\bar K}$ respectively are
$$
21110002,\quad 00201211,\quad 02001121.
$$
Thus the characteristic functions are
$$
\begin{array}{lll}
 \chi_{I^2}=111000,& \chi_{J^2}=000111,& \chi_{K^2}=000111;\\
\chi_{I^0}=01110,& \chi_{J^0}=01011,& \chi_{K^0}=01101.
\end{array}
$$
Now,
$$
\begin{array}{llll}
  I^2=\{1,2,3\}&\tau(I^2)=000&J^2=K^2=\{4,5,6\}&\tau(J^2)=\tau(K^2)=333\\
I^0=\{2,3,4\}&\tau(I^0)=111\\
J^0=\{2,4,5\}&\tau(J^0)=221&K^0=\{2,3,5\}&\tau(K^0)=211
\end{array}
$$
The reader can check that
$$
\begin{array}{l}
c_{\tau^0(A,A')^{\vee[(2n-2r)^r]},
    \tau^0(B,B')^{\vee[(2n-2r)^r]}}^{\tau^0(C,C')}=c^{\tau(K_0)}_{\tau(I_0)^{\vee[(2n-2r)^r]},
    \tau(J_0)^{\vee[(2n-2r)^r]}} =c_{(1,1,1),(1)}^{(2,1,1)}=1,\\  
c_{\tau^2(A,A')^{\vee[r^r]}, \tau^2(B,B')^{\vee[r^r]}}^{\tau^2(C,C')}=c^{\tau(K_2)}_{\tau(I_2)^{\vee[r^r]}, \tau(J_2)^{\vee[r^r]}} = c_{(3,3,3),(0,0,0)}^{(3,3,3)} =1.
\end{array}
$$
Hence, by Theorem~\ref{th:minlistineq}, 
$-\lambda_2-\lambda_3-\lambda_4+\mu_1+\mu_2+\mu_4+\nu_1+\nu_3+\nu_4\geq 0$
is one of the inequalities defining $\spss(4)$.\qed
\end{example}

\section{The truncated tensor cone}\label{sec:truncated}

In this section, we characterize the \emph{truncated tensor cone} of
$\spss(m)$, that is, $\spss(m)\cap(\Par_n^\QQ)^3$ where $m>n$.
By (\ref{eqn:firstobs}), this implies another set of inequalities
for $\nlss(n)$.
 
We first need the following result, a generalization of Theorem~\ref{th:BK}:

\begin{theorem}\label{th:BKP}
Here $X=G/P\times G/Q\times G/R$.
   Let $(\lambda,\mu,\nu)\in X(P)\times X(Q)\times X(R)$ be dominant. 
Then $\Li_{(\lambda,\mu,\nu)}\in \gitss(G,X)$ if and only if for any
simple root $\alpha$, for any 
$$
(u,v,w)\in {W_P}\backslash W/ W_{P^\alpha}\times {W_Q}\backslash W/W_{P^\alpha}\times {W_R}\backslash W/W_{P^\alpha}
$$ such that
\begin{equation}
  \label{eq:BKcohomGP}
  [\overline{PuP^\alpha/P^\alpha}]\odot_0
[\overline{QvP^\alpha/P^\alpha}]\odot_0
[\overline{RwP^\alpha/P^\alpha}]=[pt]
\in \Ho^*(G/P^\alpha,\ZZ),
\end{equation}
\begin{equation}
  \label{eq:BKineqGP}
  \scal{u\varpi_{\alpha^\vee},\lambda}+\scal{v\varpi_{\alpha^\vee},\mu}+\scal{w\varpi_{\alpha^\vee},\nu}\leq 0.
\end{equation}
\end{theorem}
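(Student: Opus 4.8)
The plan is to deduce Theorem~\ref{th:BKP} from Proposition~\ref{prop:cone1} by showing that among all well-covering pairs $(C,\tau)$, it suffices to consider those where $\tau$ is (a positive multiple of) a fundamental one-parameter subgroup $\varpi_{\alpha^\vee}$. This is the partial-flag analogue of how Theorem~\ref{th:BK} is extracted from Proposition~\ref{prop:cone1} in the $P=Q=R=B$ case, and the excerpt explicitly signals that ``the proof of Theorem~\ref{th:BKP} below'' contains such an argument. So the structure is: (1) by Proposition~\ref{prop:cone1}, $\Li_{(\lambda,\mu,\nu)}\in\gitss(G,X)$ iff \eqref{eq:wcineq} holds for every dominant one-parameter subgroup $\tau$ and every well-covering pair $(C,\tau)$; (2) for a general dominant $\tau$, write $P(\tau)=P^\alpha$-type data and show the inequality \eqref{eq:wcineq} attached to $(C,\tau)$ is implied by (in fact is a nonnegative combination of, or is identical after rescaling to) inequalities attached to fundamental coweights $\varpi_{\alpha^\vee}$ with well-covering pairs; (3) observe that for $\tau=\varpi_{\alpha^\vee}$ we have $P(\tau)=P^\alpha$, the fixed-point components $C$ are indexed by triples $(u,v,w)\in W_P\backslash W/W_{P^\alpha}\times W_Q\backslash W/W_{P^\alpha}\times W_R\backslash W/W_{P^\alpha}$, the pairing $\scal{u\tau,\lambda}$ becomes $\scal{u\varpi_{\alpha^\vee},\lambda}$, and the well-covering condition \eqref{eq:defwc} becomes exactly \eqref{eq:BKcohomGP} since $G/P(\tau)=G/P^\alpha$; this matches the displayed statement.

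For step (2), the standard mechanism (following Belkale--Kumar and \cite{GITEigen}) is: given a dominant indivisible one-parameter subgroup $\tau$ with $P(\tau)\subsetneq G$ not maximal, express $\tau$ as a positive rational combination $\tau=\sum_{\alpha} c_\alpha\,\varpi_{\alpha^\vee}$ over the simple roots $\alpha$ such that $\scal{\tau,\alpha}>0$. For each such $\alpha$ there is a natural projection $\pi_\alpha\colon G/P(\tau)\to G/P^\alpha$, and a well-covering pair $(C,\tau)$ for $X$ pushes forward to data on $(G/P^\alpha)^3$. The key cohomological input is that the equality \eqref{eq:defwc} with $\odot_0$-product equal to $[pt]$ in $\Ho^*(G/P(\tau))$ forces, for each $\alpha$ in the support of $\tau$, the corresponding $\odot_0$-product in $\Ho^*(G/P^\alpha)$ to be $[pt]$ as well (this is a Levi-factorization / Poincaré-duality statement about $\odot_0$ on $G/P(\tau)$ versus on its maximal-parabolic quotients, of the type proved in \cite{BK} and used in \cite{GITEigen, algo}). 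Granting that, each $\varpi_{\alpha^\vee}$ with the pushed-forward $(u,v,w)$ gives a well-covering pair whose inequality is \eqref{eq:BKineqGP}; taking the combination $\sum_\alpha c_\alpha(\cdots)$ of these inequalities reproduces \eqref{eq:wcineq} for $(C,\tau)$. Hence the sub-list indexed by fundamental coweights already cuts out $\gitss(G,X)$.

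The main obstacle, and the place where care is needed, is precisely the cohomological reduction in step (2): controlling how the Belkale--Kumar graded product $\odot_0$ and the well-covering condition behave under the quotient maps $G/P(\tau)\to G/P^\alpha$ when $P,Q,R$ are arbitrary parabolics rather than Borels. In the Borel case this is handled by \cite[Theorem~28]{BK}; here one must either invoke the general dominant-pair machinery of \cite{GITEigen} (Propositions~\ref{prop:coneineqplus} and~\ref{prop:cone1}) together with the level-structure of $\odot_0$, or re-run the Belkale--Kumar argument in the partial-flag setting, checking that the relevant dimension count \eqref{eq:defwc} factors compatibly through each $\pi_\alpha$. A secondary bookkeeping point is the translation between one-parameter subgroups $\tau$ with $P(\tau)$ fixed and the double-coset indexing $W_P\backslash W/W_{P^\alpha}$ in \eqref{eq:BKcohomGP}: one must verify that as $C$ ranges over irreducible components of $X^\tau$ for $\tau=\varpi_{\alpha^\vee}$, the triples $(u,v,w)$ in \eqref{eq:Cw} range exactly over the product of double cosets claimed, and that no inequality is lost. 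Once these two points are settled, the theorem follows by assembling steps (1)--(3).
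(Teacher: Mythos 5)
Your proposal is correct and follows essentially the same route as the paper's proof: reduce via Proposition~\ref{prop:cone1}, decompose $\tau=\sum_\alpha n_\alpha\varpi_{\alpha^\vee}$ over $\Supp(\tau)$, and use the Levi/quotient compatibility of the Belkale--Kumar product under $\pi\colon G/P(\tau)\to G/P^\alpha$ to produce, for each $\alpha$, a well-covering triple $(u,v,w)$ in the same double cosets as $(u',v',w')$, so that the original inequality \eqref{eq:wcineq} is the nonnegative combination $\sum_\alpha n_\alpha(\cdots)$ of the fundamental-coweight inequalities. The cohomological input you flag as the ``main obstacle'' is exactly the step the paper discharges by citing Richmond's multiplicativity result (\cite[Theorem~1.1 and Section~1.1]{Rich:mult}, see also \cite{multi}), so no gap remains.
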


\begin{proof}
$\gitss(G,X)$ is characterized by
Proposition~\ref{prop:cone1}; let $(C,\tau)$ and a choice of 
\[(u',v',w')\in {W_P}\backslash W/ W_{P(\tau)}\times {W_Q}\backslash W/W_{P(\tau)}\times {W_R}\backslash W/W_{P(\tau)}\]
be as in that proposition.
Since every inequality \eqref{eq:BKineqGP} appears in Proposition~\ref{prop:cone1} with $\tau = \varpi_{\alpha^\vee}$,
it suffices to show that (\ref{eq:wcineq}) is implied by the inequalities in Theorem~\ref{th:BKP}.

Write
$$
\tau=\sum_{\alpha\in\Delta}n_\alpha\varpi_{\alpha^\vee},
$$
where $\Delta$ is the set of simple roots.
Since $\tau$ is dominant the $n_\alpha$'s are nonnegative. 
Set
$$
\Supp(\tau):=\{\alpha\in\Delta\,:\,n_\alpha\neq 0\}.
$$ 

Fix any $\alpha\in \Supp(\tau)$, $P^\alpha:=P(\varpi_{\alpha^\vee})$ contains $P(\tau)$. 
Let 
\[\pi\,:\, G/P(\tau)\longto G/P^\alpha\] 
denote the associated projection. 
By \cite[Theorem~1.1 and Section~1.1]{Rich:mult} (see also \cite{multi}), condition~\eqref{eq:defwc} implies there are 
\[(u,v,w)\in {W_P}\backslash W/ W_{P^\alpha}\times {W_Q}\backslash W/W_{P^\alpha}\times {W_R}\backslash W/W_{P^\alpha},\]
such that condition~(\ref{eq:BKcohomGP}) holds and such that $(u,v,w)$ and $(u',v',w')$ define the same cosets in ${W_P}\backslash W/ W_{P^\alpha}\times {W_Q}\backslash W/W_{P^\alpha}\times {W_R}\backslash W/W_{P^\alpha}$. 
Therefore, inequality \eqref{eq:BKineqGP} is the same as
\[\scal{u'\varpi_{\alpha^\vee},\lambda}+\scal{v'\varpi_{\alpha^\vee},\mu}+\scal{w'\varpi_{\alpha^\vee},\nu}\leq 0.\]
Therefore each of the inequalities \eqref{eq:wcineq} can be written as a linear combination of \eqref{eq:BKineqGP}.
Hence the inequalities of the theorem imply and are implied by the
inequalities of Proposition~\ref{prop:cone1}, so the result follows.
\end{proof}

We now deduce from Theorem~\ref{th:BKP} the following statement.

\begin{proposition}
  \label{prop:ineqP0}
Let $(\lambda,\mu,\nu)$ in $\Par_n$ and $m\geq n$. 
Then $(\lambda,\mu,\nu)\in \spss(m)$ if and only
if
\begin{equation}
  \label{eqn:BKSp4nP0}
|\lambda_{I\cap [n]}|-
|\lambda_{\bar I \cap [n]}|+|\mu_{J\cap [n]}|
-|\mu_{\bar  J\cap [n]}|+|\nu_{K\cap [n]}|
-|\nu_{\bar K\cap[n]}|\leq 0,
\end{equation}
for any $1\leq r\leq m$ and $(I,J,K)\in \Schub^{P_0}(\Gr_\omega(r,2m))^3$ such that
\begin{enumerate}
\item \label{cond:sumrP0}
$|I\cap[m]|+|J\cap[m]|+|K\cap[m]| = r$, and
  \item
   $ c_{\tau(I_0)^{\vee[(2m-2r)^r]},\tau(J_0)^{\vee[(2m-2r)^r]}}^{\tau(K_0)}=c_{\tau(I_2)^{\vee[r^r]},\tau(J_2)^{\vee [r^r]}}^{\tau(K_2)}=1.$
\end{enumerate}
\end{proposition}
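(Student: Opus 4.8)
The plan is to specialize Theorem~\ref{th:BKP} to the group $G=\Sp(2m,\CC)$ with $P=Q=R=P_0$, the parabolic from Section~\ref{subsec:P0}, and then translate the cohomological and numerical conditions into the combinatorial language of isotropic Grassmannians. Recall from \eqref{eq:7} that $\spss(m)\cap(\Par_n^\QQ)^3=\gitss(\Sp(2m,\CC),(\Sp(2m,\CC)/P_0)^3)$, and that $X(P_0)^+$ identifies with $\Par_n$, so a dominant $(\lambda,\mu,\nu)\in(\Par_n)^3$ gives a point $\Li_{(\lambda,\mu,\nu)}$ in $\Pic^{\Sp(2m)}((\Sp(2m)/P_0)^3)$. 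Applying Theorem~\ref{th:BKP}, membership in the GIT-semigroup is equivalent to the inequalities \eqref{eq:BKineqGP} over all simple roots $\alpha$ and all triples $(u,v,w)\in ({W_{P_0}}\backslash W/W_{P^\alpha})^3$ satisfying the $\odot_0$-point condition \eqref{eq:BKcohomGP} in $\Ho^*(\Sp(2m)/P^\alpha,\ZZ)$.

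First I would identify which simple roots and one-parameter subgroups actually arise. For $\alpha=\alpha_r$ with $1\le r\le m$, we have $\Sp(2m)/P^{\alpha_r}=\Gr_\omega(r,2m)$ (Section~\ref{sec:SchubSp}), and $\varpi_{\alpha_r^\vee}$ is the explicit torus one-parameter subgroup given there. The double cosets ${W_{P_0}}\backslash W/W_{P^{\alpha_r}}$ index the $P_0$-stable Schubert varieties in $\Gr_\omega(r,2m)$, which by the map $I\mapsto \overline{BF_I}$ and the criterion \eqref{eqn:P0stablechar} are exactly those $I\in\Schub^{P_0}(\Gr_\omega(r,2m))$. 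So a triple $(u,v,w)$ corresponds to a triple $(I,J,K)\in\Schub^{P_0}(\Gr_\omega(r,2m))^3$, and the classes $[\overline{P_0 u P^{\alpha_r}/P^{\alpha_r}}]$ etc. become $[\overline{BF_I}]$ etc. (the $P_0$-stability making the Schubert variety the same whether we close under $B$ or $P_0$). Then I would invoke \cite[Theorem~19]{algo} (used already in the proof of Proposition~\ref{prop:Jul2aaa}), which says the $\odot_0$-point condition \eqref{eq:BKcohomGP} for $(I,J,K)$ in $\Gr_\omega(r,2m)$ is equivalent to the dimension equality $\codim(\overline{BF_I})+\codim(\overline{BF_J})+\codim(\overline{BF_K})=\dim(\Gr_\omega(r,2m))$ together with the pair of Littlewood--Richardson equalities $c_{\tau(I_0)^{\vee[(2m-2r)^r]},\tau(J_0)^{\vee[(2m-2r)^r]}}^{\tau(K_0)}=c_{\tau(I_2)^{\vee[r^r]},\tau(J_2)^{\vee[r^r]}}^{\tau(K_2)}=1$, which is condition~(2) of the Proposition verbatim.

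Next I would convert the dimension equality into condition~\eqref{cond:sumrP0}. Using \eqref{eq:dimSchub} for each of $I,J,K$ and \eqref{eq:dimGr} for $\dim(\Gr_\omega(r,2m))$, the equality $\sum\codim(\overline{BF_\bullet})=\dim(\Gr_\omega(r,2m))$ becomes, after the two LR-coefficients are already assumed to equal $1$ (so the "$\vee$" complementations behave additively), precisely $|I\cap[m]|+|J\cap[m]|+|K\cap[m]|=r$; this is the same bookkeeping as in the last paragraph of the proof of Proposition~\ref{prop:Jul2aaa}, now with $n$ replaced by $m$. Finally I would rewrite the inequality \eqref{eq:BKineqGP}: by the same computation as \eqref{eqn:ineqconversion} (valid for any $m$ and the $P_0$-stable $I$, since $w_I\varpi_{\alpha_r^\vee}$ only sees the entries of $\lambda$ indexed by $[n]$, as $\lambda_{n+1}=\cdots=\lambda_m=0$), we get $\scal{w_I\varpi_{\alpha_r^\vee},\lambda}=|\lambda_{I\cap[n]}|-|\lambda_{\bar I\cap[n]}|$, and summing the three contributions yields exactly \eqref{eqn:BKSp4nP0}.

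The main obstacle I anticipate is the careful matching of the double-coset data ${W_{P_0}}\backslash W/W_{P^{\alpha_r}}$ with $\Schub^{P_0}(\Gr_\omega(r,2m))$, and checking that the $P_0$-relative Schubert classes $[\overline{P_0 u P^{\alpha_r}/P^{\alpha_r}}]$ genuinely coincide with the ordinary Schubert classes $[\overline{BF_I}]$ so that \cite[Theorem~19]{algo} applies unchanged; this requires the observation that $\pi(\overline{P_0 w P^{\alpha_r}/P^{\alpha_r}})$ is $B$-stable precisely when $I$ satisfies \eqref{eqn:P0stablechar}, which the paper flagged as "details omitted." A secondary, purely combinatorial, point of care is that the passage from the codimension identity to $|I\cap[m]|+|J\cap[m]|+|K\cap[m]|=r$ uses that the $\vee[(2m-2r)^r]$ and $\vee[r^r]$ operations send a partition to its complement in a rectangle and hence turn the LR-nonvanishing (with coefficient exactly $1$) into an additivity of the relevant $|\cdot|$'s; once \eqref{eq:dimSchub} and \eqref{eq:dimGr} are substituted this is routine, but one should double-check the factor of $1/2$ in \eqref{eq:dimSchub} cancels correctly across the three terms.
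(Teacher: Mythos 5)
Your proposal is correct and follows essentially the same path as the paper's proof: specialize Theorem~\ref{th:BKP} to $(\Sp(2m,\CC),(\Sp(2m,\CC)/P_0)^3)$, match the double cosets ${W_{P_0}}\backslash W/W_{P^{\alpha_r}}$ with $\Schub^{P_0}(\Gr_\omega(r,2m))$, translate the $\odot_0$-point condition \eqref{eq:BKcohomGP} using a result from \cite{algo}, and convert the pairing via \eqref{eqn:ineqconversion}. The only (cosmetic) divergence is that the paper cites \cite[Theorem~8.2]{algo}, which packages conditions~(1) and~(2) directly as equivalent to \eqref{eq:defwc}, whereas you re-derive the same equivalence from \cite[Theorem~19]{algo} together with the \eqref{eq:dimSchub}--\eqref{eq:dimGr} codimension count, mirroring what was already done for Proposition~\ref{prop:Jul2aaa}.
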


\begin{proof}
  We already observed \eqref{eqn:ineqconversion} that inequality~\eqref{eqn:BKSp4nP0} is
  inequality~\eqref{eq:wcineq} in our context.  Regarding
  Theorem~\ref{th:BKP}, the only thing to prove is that
  condition~\eqref{eq:defwc} associated to $(I,J,K)$ is equivalent to
  the two conditions of the proposition. This is \cite[Theorem~8.2]{algo}.
\end{proof}

\emph{A priori}, Proposition~\ref{prop:ineqP0} could contain redundant
inequalities. In view of Theorem~\ref{th:minlistineq},
an affirmative answer to this question would imply irredundancy:

\begin{question}
  Does any $(I,J,K)\in \Schub^{P_0}(\Gr_\omega(r,2m))^3$ occurring in
  Proposition~\ref{prop:ineqP0} satisfy
  \begin{enumerate}
  \item $I\cap[n+1,2m-n]=J\cap[n+1,2m-n]=K\cap[n+1,2m-n]=\emptyset$;
\item $c_{\tau(\hat I_0)^{\vee[(2n-2r)^r]},\tau(\hat
    J_0)^{\vee[(2n-2r)^r]}}^{\tau(\hat K_0)}=1$,
  \end{enumerate}
where $\hat I=I\cap[n]\cup\{i-2(m-n)\,:\,i\in\bar I\cap [m+1,2m]\}$,
and, $\hat J$ and $\hat K$ are defined similarly.
\end{question}

\begin{proof}[Proof of Theorem~\ref{th:reduction}.]
Fix an inequality $(A,A',B,B',C,C')$ from \eqref{Ineq:ABC}. It is irredundant
for the full-dimensional cone
\[\nlss(n)=\spss(2n)\cap(\Par_n)^3\subset {\mathbb R}^{3n}.\] 
Thus, it has to appear in
Proposition~\ref{prop:ineqP0} for $m=2n$. 
Let $(\tilde I,\tilde J,\tilde K)\in \Schub^{P_0}(\Gr_\omega(\tilde
r,4n))^3$ be the associated Schubert triple.
Set $\tilde A'=\tilde I\cap[2n]$, $\tilde A =\overline{\tilde
  I}\cap[2n]$, \emph{etc}.  
  Since $(\tilde I,\tilde J,\tilde K)\in \Schub^{P_0}(\Gr_\omega(\tilde
r,4n))^3$, $\tilde A', \tilde B', \tilde C'\subset [n]$ (by \eqref{eqn:P0stablechar}). 
Thus, comparing \eqref{Ineq:ABC} and \eqref{eqn:BKSp4nP0}, we have
$$
\begin{array}{lll}
A=\tilde A\cap[n]&B=\tilde B\cap[n]&C=\tilde C\cap[n]\\
A'=\tilde A'\cap[n]=\tilde A'&B'=\tilde B'\cap[n]=\tilde B'&C'=\tilde C'\cap[n]=\tilde C'.
\end{array}
$$

Now, Proposition~\ref{prop:ineqP0}\eqref{cond:sumrP0} and Theorem~\ref{th:minlistineq}\eqref{cond:sumr}
imply that $r=\tilde r$. 
In particular, $|\tilde A|+|\tilde A'|=|A|+|A'|=r$ and $A=\tilde
A$. Similarly, $B=\tilde
B$ and $C=\tilde C$.

Let $\alpha$ be the simple root of $\Sp(4n,\CC)$ associated to
$r$. Observe that the Levi subgroup of $P^\alpha$ has type
$A_{r-1}\times C_{2n-r}$.
Let $u,v,w\in W^{P^\alpha}$ corresponding to $(\tilde
A',\tilde A), (\tilde B',\tilde B)$ and $(\tilde C',\tilde C)$, respectively.
Proposition~\ref{prop:ineqP0} and its proof show that
\eqref{eq:BKcohomGP} holds with $P=Q=R=P_0$.
In particular, one can apply the reduction rule proved in
\cite[Theorem~3.1]{Roth:red} or \cite[Theorem~1]{reduction}: $\mult_{\lambda,\mu,\nu}^{2n}$
is a tensor multiplicity for the Levi subgroup of $P^\alpha$ of type
$A_{r-1}\times C_{2n-r}$.
The factor $c_{\lambda_{A,A'},\mu_{B,B'}}^{\nu_{C,C'}^*}$ in the
theorem corresponds to the factor of type $A_{r-1}$. Adding zeros,
consider $\lambda$ as an element of $\Par_{2n}$.
Then the dominant weights to consider for the factor $C_{2n-r}$ are
$\lambda_{[2n]-(\tilde A\cup\tilde A')}, \mu_{[2n]-(\tilde
    B\cup\tilde B')}, \nu_{[2n]-(\tilde
    C\cup\tilde C')}$. 
Since these partitions have length at most $n-r$, the tensor
multiplicity for the factor of $C_{2n-r}$ is a Newell-Littlewood
coefficient. The theorem follows.
\end{proof}

\section{Application to Conjecture~\ref{conj:saturation}}
\label{sec:apptosat5}
 \begin{corollary}[of Theorem~\ref{th:minlistineq}]\label{cor:saturation}
 Conjecture~\ref{conj:saturation} holds for $n\leq 5$.
 \end{corollary}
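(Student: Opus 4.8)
The plan is to deduce Corollary~\ref{cor:saturation} from the explicit inequality description in Theorem~\ref{th:minlistineq} together with a finite computation. Concretely, Conjecture~\ref{conj:saturation} asks that for $(\lambda,\mu,\nu)\in(\Par_n)^3$, one has $N_{\lambda,\mu,\nu}\neq 0$ exactly when $|\lambda|+|\mu|+|\nu|$ is even and $(\lambda,\mu,\nu)\in\nlss(n)$. Recall from Section~\ref{subsec:tensorsemi} that $(\nls(n))_\ZZ=\{(\lambda,\mu,\nu):\lambda+\mu+\nu\in\Lambda_R(\sp_{2n})\}$, and the root lattice of $\sp_{2n}$ consists of the integer weights with even coordinate sum; hence $(\nls(n))_\ZZ\cap(\Par_n)^3$ is precisely the set of triples of partitions with $|\lambda|+|\mu|+|\nu|$ even. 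So the conjecture is equivalent to the assertion that the semigroup $\nls(n)$ is \emph{saturated} in the sense of Section~2.1, i.e. $\nls(n)=(\nls(n))_\ZZ\cap\nlss(n)$.

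The first step is therefore to reduce saturation of $\nls(n)$ to a check on a finite generating set. Since $\nls(n)$ is finitely generated \cite[Section~5.2]{GOY20a}, and $\nlss(n)$ is a rational polyhedral cone with the explicit facet description of Theorem~\ref{th:minlistineq}, the lattice points of $\nlss(n)$ lying in the coset lattice $(\nls(n))_\ZZ$ form a finitely generated monoid whose Hilbert basis can be computed; call it $H_n$. One always has $\nls(n)\subseteq (\nls(n))_\ZZ\cap\nlss(n)$, and equality holds if and only if every element of the Hilbert basis $H_n$ already lies in $\nls(n)$, i.e. has nonzero Newell-Littlewood number. So the second step is, for each $n\le 5$: (a) feed the finitely many inequalities \eqref{Ineq:ABC} (indexed by the combinatorial data $(A,A',B,B',C,C')$ satisfying conditions (1)--(3), which is a finite list once $\tau^0,\tau^2$ and the relevant Littlewood--Richardson numbers are computed) plus the lattice condition ``$|\lambda|+|\mu|+|\nu|$ even'' into a Hilbert-basis routine (e.g. Normaliz/4ti2); (b) for each returned Hilbert basis element $(\lambda,\mu,\nu)$, compute $N_{\lambda,\mu,\nu}$ directly from \eqref{eqn:Newell-Littlewood} and verify it is positive. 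If all checks pass for $n\le 5$, saturation of $\nls(n)$ follows, hence Conjecture~\ref{conj:saturation} for those $n$.

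The main obstacle is purely computational feasibility rather than conceptual: for $n=5$ the number of defining inequalities and the dimension ($3n=15$) make the Hilbert-basis computation potentially expensive, so one wants to first prune the inequality list to the irredundant facets (which Theorem~\ref{th:minlistineq} guarantees is possible, and which also shrinks the LP feasibility checks), and one may need to organize the computation facet-by-facet or exploit the $S_3$-symmetry of $N_{\lambda,\mu,\nu}$ in $(\lambda,\mu,\nu)$ to cut the work. A secondary point to be careful about is that the Hilbert basis must be taken inside the correct index-$2^{?}$ sublattice $(\nls(n))_\ZZ$ of $\ZZ^{3n}$, not inside all of $\ZZ^{3n}$, since the conjecture's parity condition is exactly this sublattice condition; getting that wrong would make the finite check vacuous. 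Once the correct lattice is used, the only remaining step is to record, in Section~\ref{sec:apptosat5}, the (modest, given the symmetry reductions) list of Hilbert basis elements for $n\le 5$ together with the nonzero values of $N_{\lambda,\mu,\nu}$ witnessing membership in $\nls(n)$.
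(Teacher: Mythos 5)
Your proposal matches the paper's proof: use Theorem~\ref{th:minlistineq} together with the parity sublattice $\Lambda_2=\{(\lambda,\mu,\nu)\in(\ZZ^n)^3:|\lambda|+|\mu|+|\nu|\text{ even}\}$ to compute, via {\sf Normaliz}, the Hilbert basis of the semigroup $\Lambda_2\cap\spss(n)$, and then verify $N_{\lambda,\mu,\nu}>0$ directly for each Hilbert basis element. One small imprecision: you invoke the formula for $(\lgs)_\ZZ$ from Section~\ref{subsec:tensorsemi} to identify $(\nls(n))_\ZZ$ with $\Lambda_2$, but $\nls(n)$ is not literally $\sps(m)$ outside the stable range $m\geq 2n$, so that citation does not directly apply to $\nls(n)$; this is harmless here, since the conjecture is stated in terms of the parity condition itself and the inclusion $\nls(n)\subseteq\Lambda_2$ needed for the argument is immediate from \eqref{eqn:Newell-Littlewood}.
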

  The proof is
 computational and uses the software {\sf Normaliz} \cite{normaliz}.

Fix $n\geq 2$ and consider the cone $\spss(n)$. Consider the two
lattices $\Lambda=\ZZ^{3n}$ and 
$$
\Lambda_2=\{(\lambda,\mu,\nu)\in(\ZZ^n)^3 :
|\lambda|+|\mu|+|\nu|\mbox{ is even}\}.
$$
Then $\nls(n)\subset\Lambda_2\cap \spss(n)$. 
Conjecture~\ref{conj:saturation} asserts
that the converse inclusion holds. 
The set $\Lambda_2\cap \spss(n)$ is a semigroup of $\Lambda_2$ defined
by a family of linear inequalities (explicitly given by
Theorem~\ref{th:minlistineq}). Using {\sf Normaliz} \cite{normaliz} one can compute (for
small $n$) the minimal set of generators, \emph{i.e.}, the \emph{Hilbert basis}, 
for this semigroup. Hence, to prove
Corollary~\ref{cor:saturation} one can proceed as follows:
\begin{enumerate}
\item Compute the list of inequalities given by
  Theorem~\ref{th:minlistineq}.
\item Compute the Hilbert basis of  $\Lambda_2\cap \spss(n)$ using
  {\sf Normaliz}.
\item Check $N_{\lambda,\mu,\nu}>0$ for any $(\lambda,\mu,\nu)$ in the
  Hilbert basis.
\end{enumerate}

The table below summarizes our computations; see \cite{programmation}.

\begin{center}
  \begin{tabular}{|c|c|c|c|c|c|}
    \hline
    $n$&$\#$ facets&\# EHI&$\#$ rays&$\#$ Hilb $\Lambda_2\cap\spss$&$\#$
                                                            Hilb $\Lambda\cap\spss$\\
    \hline
    \hline
    2& 6+18&18&12&13&20\\
    \hline
    3&9+93 &100&51&58&93\\
    \hline
    4&12+474 &662&237&302&451\\
    \hline
    5&15+2\,421&5\,731&1\,122&1\,598& 2\,171\\
    \hline
  \end{tabular}
\end{center}

In the column ``$\#$ facets'' there are the number of partition
inequalities (like $\lambda_1\geq\lambda_2$) plus the number of
inequalities~\eqref{Ineq:ABC} given by Theorem~\ref{th:minlistineq}. 
The next column counts the inequalities~\eqref{Ineq:EH} given by applying
Theorem~\ref{th:GOYconj}. 
The number of extremal rays of the cone $\spss(n)$ is also given. 
The two last column are the cardinalities of the Hilbert bases of the
two semigroups  $\Lambda_2\cap\spss(n)$ and $\Lambda\cap\spss(n)$.

\section{Littlewood-Richardson coefficients}
\label{sec:LRprelim}

We recall material \cite{FultonYT, Fulton.Harris} on
Littlewood-Richardson coefficients and their role in representation theory of the general linear group. 

\subsection{Representations of $\GL(n,\CC)$}

The irreducible rational representations $V(\lambda)$ of $\GL(n,\CC)$ are
indexed by their highest weight
\[\lambda\in\Lambda_n^+=\{(\lambda_1\geq\cdots\geq\lambda_n)\,:\,\lambda_i\in\ZZ\}\supset\Par_n.\] 
One has tensor product multiplicities $c_{\lambda,\mu}^{\nu}$ defined 
for any $\lambda,\mu,\nu \in\Lambda_n^+$ by:
\begin{equation}\label{eqn:Jun3aaa}
V(\lambda)\otimes V(\mu)=\bigoplus_{\nu\in \Lambda_n^+} V(\nu)^{\oplus c_{\lambda,\mu}^\nu}.
\end{equation}
When $\lambda,\mu,\nu\in\Par_n$, $c_{\lambda,\mu}^\nu$ is
the Littlewood-Richardson coefficient (which is why we use the same notation).

The dual representation $V(\lambda)^*$ has highest weight
\[\lambda^*=(-\lambda_n\geq\cdots\geq-\lambda_1)\in\Lambda_n^+.\] 
Moreover, for any $a\in\ZZ$, 
\begin{equation}\label{eqn:Jun3bbb}
V(\lambda+a^n)=(\det)^a\otimes V(\lambda).
\end{equation}
Consequently, for any $\lambda,\mu,\nu\in\Lambda^+_n$,
\begin{equation}
  \label{eq:LRinvtrans}
  c_{\lambda,\mu}^\nu=c_{\lambda+a^n,\mu+b^n}^{\nu+(a+b)^n}=c_{\lambda^*+a^n,\mu^*+b^n}^{\nu^*+(a+b)^n};
\end{equation}
this is \cite[Theorem~4]{BOR}.
For $a$ and $b$ big enough, Formula~\eqref{eq:LRinvtrans} implies that
$c_{\lambda,\mu}^\nu$ is a Littlewood-Richardson coefficient.

Let $\nu^t$ denote the conjugate of $\nu$.
Since
$c^{\nu}_{\lambda,\mu} = c^{\nu^t}_{\lambda^t,\mu^t}$,
by \eqref{eq:LRinvtrans}, 
\begin{equation}
\label{eq:Jun12bbb}
    c^{\nu}_{\lambda,\mu} = c^{\nu^t}_{\lambda^t,\mu^t} = c^{(\nu^t)^{\vee[(n+m)^{a+b}]}}_{(\lambda^t)^{\vee[n^{a+b}]},(\mu^t)^{\vee[m^{a+b}]}} = c^{(\nu^{\vee [(a+b)^{n+m}]})^t}_{(\lambda^{\vee[(a+b)^n]})^t,(\mu^{\vee[(a+b)^m]})^t} = c^{\nu^{\vee[(a+b)^{n+m}]}}_{\lambda^{\vee[(a+b)^n]},\mu^{\vee[(a+b)^m]}},
\end{equation}
for any $m\geq \ell(\mu)$.

\subsection{Six-fold NL-coefficients}\label{sec:GLstable}
Let $p$, $q$ and $m$ be positive integers such that $p+q\leq m$. 
Following R.~Howe-E-C.~Tan-J.~Willenbring \cite{HTWbranching}, to any
$\lambda^+\in\Par_p$ and $\lambda^-\in\Par_q$, we associate the
following element in $\Lambda_m^+$:
\[
[\lambda^+,\lambda^-]_m =  (\lambda^+_1,\lambda^+_2,\ldots,
\lambda^+_p,\underbrace{0,\ldots,0}_{m-p-q},-\lambda^-_{q},\ldots,-\lambda^-_1).\]

Let $V(\lambda)\boxtimes V(\mu)$ be the irreducible representation of $\GL(n,\CC)\times \GL(n,\CC)$,
where $\boxtimes$ refers to external tensor product. View $\GL(n,\CC)\subset \GL(n,\CC)\times \GL(n,\CC)$
under the diagonal embedding. The associated \emph{branching coefficient} is
\[[V(\nu):  V(\lambda)\boxtimes V(\mu)]:= \dim \Hom_{\GL(n,\CC)}(V(\nu), V(\lambda)\boxtimes
V(\mu)|_{\GL(n,\CC)}).\]

\begin{proposition}\cite[Section 2.1.1]{HTWbranching},\cite{King71}\label{thm:GLbranching}
Let $\lambda^\pm$, $\mu^\pm$ and $\nu^\pm$ be six partitions.
Let $p,q,r$ and $s$ be four nonnegative integers such that 
$$
\begin{array}{lll}
  \ell(\lambda^+)\leq p&\ell(\mu^+)\leq r&\ell(\nu^+)\leq p+r\\
\ell(\lambda^-)\leq q&\ell(\mu^-)\leq s&\ell(\nu^-)\leq q+s\\
\end{array}
$$
Let $m$ be a positive integer such that $m\geq p+q+r+s$. Then 
\begin{align*}
N_{\mu^+,\nu^+,\lambda^+,\mu^-,\nu^-,\lambda^-}=  & \ \dim \Hom_{\GL(n,\CC)}(V(\nu), V(\lambda)\boxtimes
V(\mu))\\
= & \ c_{[\lambda^+,\lambda^-]_m, [\mu^+,\mu^-]_m}^{[\nu^+,\nu^-]_m}.
\end{align*}
\end{proposition}
\begin{proof}
The first equality is in \cite[Section 2.1.1]{HTWbranching} who credit \cite{King71}. The second statement follows from
\cite[p.~427]{Fulton.Harris}.\footnote{Using (\ref{eqn:Jun3bbb}) one can equate this with a Littlewood-Richardson coefficient $c_{\tilde\lambda,\tilde\mu}^{\tilde\nu}$ where $\tilde\lambda,\tilde\mu,\tilde\nu \in \Par_m$.}
\end{proof}

Conversely, any Littlewood-Richardson coefficient is a six-fold
NL-coefficient. More precisely,
$c_{\lambda,\mu}^\nu=N_{\mu,\nu,\lambda,\emptyset,\emptyset,\emptyset}$,
which corresponds to the case when $\lambda^-=\mu^-=\nu^-=\emptyset$
in Proposition~\ref{thm:GLbranching}.

We now use Proposition~\ref{thm:GLbranching} to rephrase
Theorem~\ref{th:GOYconj}. Fix $A$, $A'$, $B$, $B'$, $C$ and $C'$
subseets of $[n]$
satisfying the two first conditions of Theorem~\ref{th:GOYconj}. 
Set $p=|B'|$, $r=|C'|$, $q=n-p$ and $s=n-q$. 
 Observe that $|A|=p+r$, $|B|\leq q$, $|C|\leq s$ and $|A'|\leq
 n-p-r\leq p+q$. Set finally $m=2n=p+q+r+s$. 
Since $\ell(\tau(A))\leq |A|$, Proposition~\ref{thm:GLbranching}  implies that
$$
N_{\tau(A), \tau(C'), \tau(B), \tau(A'), \tau(C), \tau(B')}=c_{[\tau(B'),\tau(B)]_m, [\tau(C'),\tau(C)]_m}^{[\tau(A),\tau(A')]_m}
$$
is a Littlewood-Richardson coefficient for $\GL_{2n}(\CC)$.
In particular, in Theorem~\ref{th:GOYconj}, condition $(3)$ can be
replaced by 
$$
(3')\quad c_{[\tau(B'),\tau(B)]_m, [\tau(C'),\tau(C)]_m}^{[\tau(A),\tau(A')]_m}>0 .
$$

We now observe that Proposition~\ref{thm:GLbranching} and Knutson-Tao
saturation \cite{KT:saturation} implies the saturation result for the
six-fold NL-coefficients from the introduction (Proposition~\ref{prop:satNL6}).

\section{Extended Horn inequalities and the proof of Theorem~\ref{th:GOYconj}}\label{sec:proofGOYconj}

\subsection{Extended Horn inequalities}
\label{sec:relationtoGOY}

We recall the following notion from \cite{GOY20b}:
\begin{definition}
\label{def:main}
An \emph{extended Horn inequality} on $\Par_n^3$ is
\begin{equation}
\label{Ineq:Grand}
0\leq |\lambda_A|-|\lambda_{A'}|+|\mu_B|-|\mu_{B'}|+|\nu_C|-|\nu_{C'}|
\end{equation}  
\end{definition}
\noindent
where $A,A',B,B',C,C' \subseteq [n]$ satisfy
\begin{itemize}
\item[(I)]
$A \cap A'= B \cap B' = C \cap C' = \emptyset$
\item[(II)]
$|A| = |B'|+|C'|,|B| =|A'|+|C'|, |C| = |A'|+|B'|$
\item[(III)]
There exists $A_1,A_2,B_1,B_2,C_1,C_2\subseteq [n]$ such that:
\begin{itemize}
\item[(i)]
$|A_1|=|A_2|=|A'|, |B_1|=|B_2|=|B'|, |C_1|=|C_2|=|C'|$
\item[(ii)]
$c^{\tau(A')}_{\tau(A_1),\tau(A_2)}, c^{\tau(B')}_{\tau(B_1),\tau(B_2)}, c^{\tau(C')}_{\tau(C_1),\tau(C_2)}>0$
\item[(iii)]
$c^{\tau(A)}_{\tau(B_1),\tau(C_2)}, c^{\tau(B)}_{\tau(C_1),\tau(A_2)}, c^{\tau(C)}_{\tau(A_1),\tau(B_2)}>0.$
\end{itemize}
\end{itemize}

\begin{definition}
The \emph{extended Horn cone} is:
\begin{equation}
  \label{eq:3}
  \EH(n):=\{(\lambda,\mu,\nu)\in (\Par_n^{\mathbb Q})^3\,:\, \mbox{inequalities
    \eqref{Ineq:Grand} are satisfied}\}.
\end{equation}
\end{definition}

Let 
\[\overline{\EH}(n)=\EH(n)\cap \{(\lambda,\mu,\nu)\in (\Par_n)^3: 
|\lambda|+|\mu|+|\nu|\mbox{ is even}\}.\]

\begin{conjecture}[{\cite[Conjecture~1.4]{GOY20b}}]
\label{conj:theGOYthing}
If $(\lambda,\mu,\nu)\in {\overline \EH}(n)$ then $N_{\lambda,\mu,\nu}>0$. 
\end{conjecture}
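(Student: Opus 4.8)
The plan is to deduce Conjecture~\ref{conj:theGOYthing} from two inputs: the cone description $\nlss(n)=\EH(n)$, which is Theorem~\ref{th:GOYconj}, and NL-saturation (Conjecture~\ref{conj:saturation}). Granting both, if $(\lambda,\mu,\nu)\in\overline{\EH}(n)$ then $(\lambda,\mu,\nu)\in\EH(n)=\nlss(n)$, so $N_{t\lambda,t\mu,t\nu}\neq 0$ for some $t>0$; since $|\lambda|+|\mu|+|\nu|$ is even, saturation forces $N_{\lambda,\mu,\nu}>0$. Unconditionally this yields the conjecture for $n\le 5$ by Corollary~\ref{cor:saturation}. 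So the substantive task is Theorem~\ref{th:GOYconj}, and I focus on it.

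First I would carry out the bookkeeping of Section~\ref{sec:relationtoGOY}: the inequalities \eqref{Ineq:EH} and the extended Horn inequalities \eqref{Ineq:Grand} of Definition~\ref{def:main} form the same list. Conditions (1) and (I) coincide, (2) is the arithmetic of (II), and for (3), expanding $N_{\tau(A'),\tau(B),\tau(C'),\tau(A),\tau(B'),\tau(C)}$ via \eqref{eq:defmNL} says precisely that there are partitions $\alpha_1,\dots,\alpha_6$ with all six cyclic Littlewood--Richardson coefficients positive; the substitution $\alpha_1=\tau(A_1),\alpha_2=\tau(A_2),\alpha_3=\tau(C_1),\alpha_4=\tau(C_2),\alpha_5=\tau(B_1),\alpha_6=\tau(B_2)$ matches this with (III)(ii)--(iii), using the symmetry $c^{\pi}_{\theta,\kappa}=c^{\pi}_{\kappa,\theta}$. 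The only point to verify is that any partition occurring in a nonzero product such as $c_{\alpha_i,\alpha_{i+1}}^{\tau(A')}$ lies inside the rectangle whose $\tau$-preimage is an $r$-subset of $[n]$, so that restricting the $\alpha_i$ to be $\tau$-images costs nothing.

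Next, Theorem~\ref{th:GOYconj} is the equality $\nlss(n)=\EH(n)$. The inclusion $\nlss(n)\subseteq\EH(n)$ — validity of the extended Horn inequalities on $\nlss(n)$ — is from \cite{GOY20b}. For the reverse inclusion I would use Theorem~\ref{th:minlistineq}: since the irredundant list \eqref{Ineq:ABC} already cuts out $\nlss(n)$, it suffices to show each such inequality is an extended Horn inequality, i.e. that conditions (1),(2) together with the two ``coefficient $=1$'' conditions (3) of Theorem~\ref{th:minlistineq} imply $N_{\tau(A'),\tau(B),\tau(C'),\tau(A),\tau(B'),\tau(C)}\neq 0$. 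Unwinding the parametrization via the bijection \eqref{eq:bijIA}, the explicit formulas of Definition-Lemma~\ref{lem:tau02A}, and the geometry of Figure~\ref{fig:tauIA} (where $\tau(A')$ and $\tau(A)$ are the bottom and top pieces of $\tau(I)$), the coefficient-one hypotheses on the $\tau^0$- and $\tau^2$-data become exactly the nonvanishing input $c_{\lambda,\mu}^{\nu^\vee}>0$ demanded by Theorem~\ref{theorem:strongermain}, applied with $(a',b',c')=(|A'|,|B'|,|C'|)$; its conclusion $N_{\lambda^-,\mu^+,\nu^-,\lambda^+,\mu^-,\nu^+}>0$, after identifying $\lambda^\pm,\mu^\pm,\nu^\pm$ with the $A/A'$-decompositions of $\tau(I),\tau(J),\tau(K)$, is the desired nonvanishing.

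The main obstacle is Theorem~\ref{theorem:strongermain} itself — the nonvanishing of the six-fold multiple Newell--Littlewood number of Figure~\ref{fig:multipleNL} — which absorbs the tableau combinatorics of Sections~\ref{sec:LRprelim}--\ref{sec:9}. I would prove it by exhibiting $\alpha_1,\dots,\alpha_6\in\Par_r$ explicitly. The engine is the demotion-to-RSK dictionary (Theorem~\ref{prop:bumpRSKcorrespondence}, Corollary~\ref{cor:May11aaa}) and its corollary Theorem~\ref{cor:rhodemo}, which splits a ballot tableau on $\nu^\vee/\mu$ along its first $a'$ rows. One first handles $\lambda^+=\emptyset$ by cutting the skew shape along a well-chosen column — a column of length $a'$ inside the middle region (Case~1), or, lacking one, after fattening via Lemma~\ref{lemma:GUT} (Case~2) — producing $\alpha_5,\alpha_6$ from Proposition~\ref{lemma:splittableaux} and Lemma~\ref{Lemma:tauCempty}; the general case bootstraps by demoting $T\in\SSYT(\nu^\vee/\mu,\lambda)$ past the top $a'$ rows to obtain a partition $\rho$, reading off $\alpha_1,\dots,\alpha_4$, and applying the $\lambda^+=\emptyset$ case to the reduced data. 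Throughout, the delicate bookkeeping is keeping the rectangle widths and the $(-)^\vee$ complementations aligned so the Littlewood--Richardson coefficients match on the nose, and checking (as above) that each partition produced genuinely fits its rectangle.
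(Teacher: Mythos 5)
Your proposal is correct and matches the paper's treatment exactly: the paper also does not prove Conjecture~\ref{conj:theGOYthing} unconditionally, but establishes the weaker cone equality Theorem~\ref{theGOYproof} ($\EH(n)=\nlss(n)$, i.e.\ Theorem~\ref{th:GOYconj}) and records Corollary~\ref{cor:satimplies} that NL-saturation implies the conjecture, with an unconditional conclusion only for $n\le 5$ via Corollary~\ref{cor:saturation}. Your outline of Theorem~\ref{th:GOYconj} via Lemma~\ref{lemma:sextuple}, Theorem~\ref{th:minlistineq}, and the two-case (\(\lambda^+=\emptyset\) then general, via demotion/Theorem~\ref{cor:rhodemo}) proof of Theorem~\ref{theorem:strongermain} is the paper's own argument.
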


We will prove a weakened version of Conjecture~\ref{conj:theGOYthing}:

\begin{theorem}[\emph{cf.} {\cite[Conjecture~1.4]{GOY20b}}]
\label{theGOYproof}
$\EH(n)=\nlss(n)$.
\end{theorem}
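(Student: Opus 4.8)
\textbf{Proof plan for Theorem~\ref{theGOYproof} ($\EH(n)=\nlss(n)$).}

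The plan is to prove the two inclusions separately, and it is worth noting first that the real content is a single direction. By Theorem~\ref{th:GOYconj}, $\nlss(n)$ is cut out by the inequalities \eqref{Ineq:EH}, indexed by triples of triples of subsets satisfying conditions (1), (2), (3) there, where (3) is the nonvanishing of a multiple Newell--Littlewood number $N_{\tau(A'),\tau(B),\tau(C'),\tau(A),\tau(B'),\tau(C))}\neq 0$. So both $\EH(n)$ and $\nlss(n)$ are intersections of half-spaces of exactly the shape \eqref{Ineq:Grand}, and the question reduces to comparing the two index sets of admissible $(A,A',B,B',C,C')$: the one governed by the extended-Horn condition (I)--(III) of Definition~\ref{def:main}, and the one governed by conditions (1)--(3) of Theorem~\ref{th:GOYconj}. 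Concretely, I would show that the set of $6$-tuples satisfying Definition~\ref{def:main}(I)--(III) and the set satisfying Theorem~\ref{th:GOYconj}(1)--(3) cut out the same cone; they need not be literally equal as index sets, only to define the same intersection of half-spaces.

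For the inclusion $\nlss(n)\subseteq\EH(n)$ I would argue that every inequality of Theorem~\ref{th:GOYconj} is an extended Horn inequality, \emph{i.e.} that any $6$-tuple satisfying (1)--(3) of Theorem~\ref{th:GOYconj} also satisfies (I)--(III) of Definition~\ref{def:main}. Conditions (1) match (I) verbatim, and (2) is easily seen to be equivalent to (II) (both encode $r:=|A|+|A'|$ common and $|A'|+|B'|+|C'|=r$, which rearranges to $|A|=|B'|+|C'|$ etc.). The crux is (3)$\Rightarrow$(III): given $N_{\tau(A'),\tau(B),\tau(C'),\tau(A),\tau(B'),\tau(C))}\neq 0$, I unpack the definition \eqref{eq:defmNL} of the multiple NL-number as a sum of products of six Littlewood--Richardson coefficients over auxiliary partitions $\alpha_1,\dots,\alpha_6$; a nonzero term furnishes exactly the data $A_1,A_2,B_1,B_2,C_1,C_2$ (or rather the partitions $\tau(A_i)$ etc., recovered from the $\alpha_j$'s via the bijection $A\leftrightarrow\tau(A)$ inside a fixed rectangle) making (III)(i)--(iii) hold. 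The bookkeeping here — aligning the cyclic labelling $1,\dots,6\in\ZZ/6\ZZ$ of \eqref{eq:defmNL} with the hexagon of Figure~\ref{fig:multipleNL} and with the asymmetric roles of primed/unprimed sets in (III)(ii) versus (III)(iii) — is the fiddly but routine part; this is also exactly the translation carried out, in the reverse direction, in Section~\ref{sec:relationtoGOY} when relating Theorem~\ref{th:GOYconj} to extended Horn inequalities.

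For the reverse inclusion $\EH(n)\subseteq\nlss(n)$ — the substantive direction — I must show that every extended Horn inequality \eqref{Ineq:Grand} actually holds on $\nlss(n)$, equivalently that any $6$-tuple satisfying Definition~\ref{def:main}(I)--(III) yields a \emph{valid} inequality for $\nlss(n)$. Here the key input is the nonvanishing result Theorem~\ref{theorem:strongermain} (and its consequence, the relation between demotion and RSK, Theorem~\ref{prop:bumpRSKcorrespondence}/Theorem~\ref{cor:rhodemo}), which guarantees positivity of the relevant multiple Newell--Littlewood number whenever the extended-Horn combinatorial data is present: given $A_1,A_2,B_1,B_2,C_1,C_2$ with the six LR-coefficients of (III)(ii)--(iii) positive, I assemble the corresponding partitions inside a rectangle $(n-r)^r$, invoke Theorem~\ref{theorem:strongermain} with the appropriate splitting $a'+b'+c'=r$ to conclude $N_{\tau(A'),\tau(B),\tau(C'),\tau(A),\tau(B'),\tau(C))}\neq 0$, and then that $6$-tuple satisfies condition (3) of Theorem~\ref{th:GOYconj}, so \eqref{Ineq:Grand} is one of the defining inequalities of $\nlss(n)$ and in particular holds there. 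I expect the main obstacle to be precisely this step: checking that the LR-positivity hypotheses packaged in Definition~\ref{def:main}(III) are in the exact form required to feed Theorem~\ref{theorem:strongermain} — matching the rectangle conventions, the $\vee$-complementations, the cyclic order of the six factors, and the constraint $\lambda,\mu,\nu\subseteq(n-r)^r$ — so that no case is lost and no spurious inequality is introduced. Once the dictionary between the two parametrizations is pinned down, both inclusions follow, and combined with Theorem~\ref{th:GOYconj} they give $\EH(n)=\nlss(n)$. \qed
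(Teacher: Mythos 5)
Your plan is circular as written and misidentifies where the key lemma is used. You assume Theorem~\ref{th:GOYconj} up front, but the paper proves Theorems~\ref{th:GOYconj} and~\ref{theGOYproof} jointly: Lemma~\ref{lemma:sextuple} establishes that the index set (I)--(III) of Definition~\ref{def:main} and the index set (1)--(3) of Theorem~\ref{th:GOYconj} agree, so the two theorems are equivalent, and the paper then proves Theorem~\ref{th:GOYconj} directly. If you may cite Theorem~\ref{th:GOYconj} as a black box, the result follows immediately from Lemma~\ref{lemma:sextuple} and the rest of your plan is superfluous; if you may not, your plan never establishes $\EH(n)\subseteq\nlss(n)$.

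More substantively, you invoke Theorem~\ref{theorem:strongermain} for the implication Definition~\ref{def:main}(III) $\Rightarrow$ Theorem~\ref{th:GOYconj}(3), but that step is immediate and does not need it: positivity of the six Littlewood--Richardson coefficients in (III)(ii)--(iii) gives, after reindexing, a single nonzero term in the sum \eqref{eq:defmNL}, so $N_{\tau(A'),\tau(B),\tau(C'),\tau(A),\tau(B'),\tau(C)}\neq 0$ outright (this is exactly the first half of the proof of Lemma~\ref{lemma:sextuple}). The actual, nontrivial use of Theorem~\ref{theorem:strongermain} is elsewhere, to prove $\EH(n)\subseteq\nlss(n)$: one takes an inequality from the minimal irredundant list of Theorem~\ref{th:minlistineq}, which characterizes $\nlss(n)$, and must show the associated multiple NL-number does not vanish. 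The hypothesis $c^{\nu^\vee}_{\lambda,\mu}>0$ of Theorem~\ref{theorem:strongermain} is supplied by the $=1$ conditions in Theorem~\ref{th:minlistineq}(3), via the semigroup property of LR coefficients, and its conclusion is precisely the required nonvanishing. Theorem~\ref{th:minlistineq} is entirely absent from your proposal, yet it is the crucial input; without it, the hard inclusion is never established. (You also have the two inclusion labels swapped --- showing every inequality of Theorem~\ref{th:GOYconj} is an extended Horn inequality yields $\EH(n)\subseteq\nlss(n)$, not $\nlss(n)\subseteq\EH(n)$ --- but since you eventually argue both ways, that slip is merely cosmetic.)
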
 

Consequently, we are able to answer an issue raised in \cite[Section~1]{GOY20b}:

\begin{corollary}
\label{cor:satimplies}
Conjecture~\ref{conj:saturation} implies
Conjecture~\ref{conj:theGOYthing}.
\end{corollary}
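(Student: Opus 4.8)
The final statement to prove is Corollary~\ref{cor:satimplies}: Conjecture~\ref{conj:saturation} implies Conjecture~\ref{conj:theGOYthing}.

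\medskip

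The plan is to chain together the equalities of cones that have already been established. By Theorem~\ref{theGOYproof}, $\EH(n)=\nlss(n)$. Intersecting both sides with the even sublattice $\Lambda_2=\{(\lambda,\mu,\nu)\in(\ZZ^n)^3 : |\lambda|+|\mu|+|\nu|\text{ is even}\}$ gives
\[
\overline{\EH}(n) = \EH(n)\cap\Lambda_2 = \nlss(n)\cap\Lambda_2.
\]
So suppose $(\lambda,\mu,\nu)\in\overline{\EH}(n)$. Then $|\lambda|+|\mu|+|\nu|$ is even and $(\lambda,\mu,\nu)\in\nlss(n)$, i.e., there exists $t>0$ with $N_{t\lambda,t\mu,t\nu}\neq 0$. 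These are precisely the two hypotheses in the statement of Conjecture~\ref{conj:saturation} (note that $\nlss(n)$ is defined via the saturation condition $\exists t>0\ N_{t\lambda,t\mu,t\nu}\neq 0$, matching the conjecture's phrasing verbatim). Hence, assuming Conjecture~\ref{conj:saturation}, we conclude $N_{\lambda,\mu,\nu}\neq 0$, which, since Newell-Littlewood numbers are nonnegative integers, means $N_{\lambda,\mu,\nu}>0$. This is exactly the conclusion of Conjecture~\ref{conj:theGOYthing}, so the implication holds.

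\medskip

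First I would state the reduction cleanly: it suffices to show $\overline{\EH}(n)$ equals the set of $(\lambda,\mu,\nu)\in(\Par_n)^3$ with $|\lambda|+|\mu|+|\nu|$ even that lie in $\nlss(n)$. Then I would invoke Theorem~\ref{theGOYproof} for the cone-level identity and simply intersect with $\Lambda_2$; one should double check that $\EH(n)\cap(\Par_n)^3$ and $\nlss(n)\cap(\Par_n)^3$ are being compared over the same lattice of integral points, which is immediate from the definitions since both $\EH(n)$ and $\nlss(n)$ are cones in $(\Par_n^{\QQ})^3$ and we restrict to integral triples. Finally I would spell out that Conjecture~\ref{conj:saturation}, applied to such a triple, yields $N_{\lambda,\mu,\nu}\neq 0$ hence $>0$.

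\medskip

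There is essentially no obstacle here: the corollary is a formal consequence of Theorem~\ref{theGOYproof} (the substantive result, whose proof occupies Sections~\ref{sec:LRprelim}--\ref{sec:proofGOYconj}) together with the bookkeeping that $\overline{\EH}(n)$ and $\overline{\Horn}$-type "even lattice point" sets are defined by intersecting the respective cones with $\Lambda_2$. The only point requiring a sentence of care is that the phrase "there exists $t>0$ such that $N_{t\lambda,t\mu,t\nu}\neq 0$" in Conjecture~\ref{conj:saturation} is literally the membership condition for $\nlss(n)$, so no further approximation or limiting argument is needed. If anything, the "hard part" is purely expository: making explicit that the two sublattice conditions and the two saturation conditions coincide, so that the deep input Theorem~\ref{theGOYproof} can be fed directly into Conjecture~\ref{conj:saturation}.
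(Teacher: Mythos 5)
Your proof is correct and is precisely the argument the paper leaves implicit: given Theorem~\ref{theGOYproof}, any $(\lambda,\mu,\nu)\in\overline{\EH}(n)$ satisfies both the parity condition and the saturation condition of Conjecture~\ref{conj:saturation}, whose ``if'' direction then yields $N_{\lambda,\mu,\nu}>0$. The paper states Corollary~\ref{cor:satimplies} without a written proof exactly because it is this formal consequence of Theorem~\ref{theGOYproof}, and your reasoning supplies the missing two lines faithfully.
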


Corollary~\ref{cor:satimplies} is analogous to the situation 
in Zelevinsky's \cite{Zelev}, before \cite{KT:saturation}.

The following shows that Theorem~\ref{th:GOYconj} is
equivalent to Theorem~\ref{theGOYproof}.

\begin{lemma}
\label{lemma:sextuple}
A sextuple $(A,A',B,B',C,C')$ of subsets of $[n]$ parametrizes an
extended Horn inequality if and only if it appears in Theorem~\ref{th:GOYconj}.
\end{lemma}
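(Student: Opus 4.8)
The plan is to verify that the three conditions (I)--(III) in Definition~\ref{def:main} match, one at a time, the three conditions (1)--(3) in Theorem~\ref{th:GOYconj}. Condition (I) and condition (1) are literally identical, so nothing is needed there. For the second condition, I would note that (2) asserts $|A|+|A'|=|B|+|B'|=|C|+|C'|=|A'|+|B'|+|C'|$; cancelling $|A'|$ from the first equality, $|B'|$ from the second, and $|C'|$ from the third yields exactly the three identities $|A|=|B'|+|C'|$, $|B|=|A'|+|C'|$, $|C|=|A'|+|B'|$ of (II), and this manipulation is reversible. So (2)$\Longleftrightarrow$(II) is a one-line cardinality computation.

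The content of the lemma is the equivalence (3)$\Longleftrightarrow$(III). First I would expand, using the definition \eqref{eq:defmNL} with $s=6$,
\[
N_{\tau(A'),\tau(B),\tau(C'),\tau(A),\tau(B'),\tau(C)}=\sum_{(\alpha_1,\dots,\alpha_6)\in(\Par_n)^6} c^{\tau(A')}_{\alpha_1,\alpha_2}\,c^{\tau(B)}_{\alpha_2,\alpha_3}\,c^{\tau(C')}_{\alpha_3,\alpha_4}\,c^{\tau(A)}_{\alpha_4,\alpha_5}\,c^{\tau(B')}_{\alpha_5,\alpha_6}\,c^{\tau(C)}_{\alpha_6,\alpha_1}.
\]
Since every summand is a nonnegative integer, condition (3) holds if and only if there exist $\alpha_1,\dots,\alpha_6\in\Par_n$ making all six Littlewood--Richardson coefficients in a single summand positive. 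I would then set up the dictionary $\alpha_1=\tau(A_1)$, $\alpha_2=\tau(A_2)$, $\alpha_3=\tau(C_1)$, $\alpha_4=\tau(C_2)$, $\alpha_5=\tau(B_1)$, $\alpha_6=\tau(B_2)$, and check, using the Littlewood--Richardson symmetry $c^\nu_{\lambda,\mu}=c^\nu_{\mu,\lambda}$, that the six coefficients $c^{\tau(A')}_{\alpha_1,\alpha_2}$, $c^{\tau(B)}_{\alpha_2,\alpha_3}$, $c^{\tau(C')}_{\alpha_3,\alpha_4}$, $c^{\tau(A)}_{\alpha_4,\alpha_5}$, $c^{\tau(B')}_{\alpha_5,\alpha_6}$, $c^{\tau(C)}_{\alpha_6,\alpha_1}$ coincide with the six coefficients in (III)(ii)--(iii) (the ``even-indexed'' ones pick up a transposition of their lower indices). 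This immediately gives (III)$\Longrightarrow$(3).

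For the reverse implication (3)$\Longrightarrow$(III), I would invoke the standard bijection, realized by the map $\tau$ of \eqref{taudef}, between $r$-element subsets of $[n]$ and partitions lying in the rectangle $((n-r)^r)$. Starting from $\alpha_1,\dots,\alpha_6$ coming from a positive summand, the key observation is that each $\alpha_i$ automatically fits in the correct rectangle: for instance $c^{\tau(A')}_{\alpha_1,\alpha_2}>0$ forces $\alpha_1,\alpha_2\subseteq\tau(A')\subseteq((n-|A'|)^{|A'|})$, so $\alpha_1=\tau(A_1)$ and $\alpha_2=\tau(A_2)$ for uniquely determined $|A'|$-element subsets $A_1,A_2$; likewise $c^{\tau(C')}_{\alpha_3,\alpha_4}>0$ produces $|C'|$-element subsets $C_1,C_2$ with $\alpha_3=\tau(C_1)$, $\alpha_4=\tau(C_2)$, and $c^{\tau(B')}_{\alpha_5,\alpha_6}>0$ produces $|B'|$-element subsets $B_1,B_2$ with $\alpha_5=\tau(B_1)$, $\alpha_6=\tau(B_2)$. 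Then (III)(i) holds by construction, and (III)(ii)--(iii) are exactly the positivity of the six coefficients rewritten via the same LR-symmetry. The only real care needed---and the one place I would slow down---is bookkeeping: making sure the cyclic order of the $6$-tuple $(\tau(A'),\tau(B),\tau(C'),\tau(A),\tau(B'),\tau(C))$ in \eqref{Ineq:EH} is matched to the ``three primed / three unprimed'' pattern of subsets in Definition~\ref{def:main}, and checking the containments that make the $\tau$-bijection applicable; there is no genuine combinatorial obstacle beyond this, so the proof will be short.
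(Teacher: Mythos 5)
Your proposal is correct and follows essentially the same route as the paper: expand the multiple Newell--Littlewood number as a cyclic sum of products of six Littlewood--Richardson coefficients, match the summands to (III)(ii)--(iii) via the dictionary $\alpha_1=\tau(A_1),\dots,\alpha_6=\tau(B_2)$ and the symmetry $c^{\nu}_{\lambda,\mu}=c^{\nu}_{\mu,\lambda}$, and for the converse use that $c^{\tau(A')}_{\alpha_1,\alpha_2}>0$ forces $\alpha_1,\alpha_2\subseteq\tau(A')\subseteq((n-|A'|)^{|A'|})$ so that each $\alpha_i$ is $\tau$ of a subset of the correct cardinality. The bookkeeping you flag (even-indexed factors transposed) checks out, and your explicit treatment of the equivalences (I)$\Leftrightarrow$(1) and (II)$\Leftrightarrow$(2) is a harmless addition the paper leaves implicit.
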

\begin{proof}
Definition~\ref{def:main}  implies
\[c_{\tau(B_1),\tau(C_2)}^{\tau(A)} c_{\tau(C_2), \tau(C_1)}^{\tau(C')} c_{\tau(C_1), \tau(A_2)}^{\tau(B)} c_{\tau(A_2), \tau(A_1)}^{\tau(A')} c_{\tau(A_1), \tau(B_2)}^{\tau(C)}
c_{\tau(B_2), \tau(B_1)}^{\tau(B')}>0.\]
Since $\tau(A_1),\,\tau(A_2)\dots$ have length at most $n$, this
implies 
$N_{\tau(A),\tau(C'),\tau(B),\tau(A'),\tau(C),\tau(B')}\neq 0$, and thus 
$(\tau(A), \tau(C'), \tau(B), \tau(A'), \tau(C), \tau(B'))\in \operatorname{NL}^6\!\operatorname{-sat}(r)$.

Conversely, if $(\tau(A), \tau(C'), \tau(B), \tau(A'), \tau(C), \tau(B'))\in \operatorname{NL}^6\!\operatorname{-sat}(r)$, by Proposition~\ref{prop:satNL6}, $N_{\tau(A),\tau(C'),\tau(B),\tau(A'),\tau(C),\tau(B')}\neq 0$.
Therefore there exists $\alpha_1,\alpha_2,\ldots,\alpha_6\in \Par_n$ such that
 \[c_{\alpha_1,\alpha_2}^{\tau(A)}c_{\alpha_2,\alpha_3}^{\tau(C')}c_{\alpha_3,\alpha_4}^{\tau(B)}
c_{\alpha_4,\alpha_5}^{\tau(A')}c_{\alpha_5,\alpha_6}^{\tau(C)}c_{\alpha_6,\alpha_1}^{\tau(B')}>0.\]
Set $a=|A|$. Then, the Young diagram of  $\tau(A)$ is contained in
the rectangle $a\times(n-a)$. 
But the nonvanishing of $c_{\alpha_1,\alpha_2}^{\tau(A)}$ implies
that $\alpha_1\subset\tau(A)$. Hence there exists $B_1\subseteq [n]$
such that $\tau(B_1)=\alpha_1$. Similarly, we can pick $C_2,C_1,A_2,A_1,B_2\subseteq [n]$ such that  
$\tau(C_2)=\alpha_2, \tau(C_1)=\alpha_3$ \emph{etc.} that satisfy Definition~\ref{def:main}.
\end{proof}

\subsection{Proof of Theorem~\ref{th:GOYconj}}
($\Rightarrow$) By Lemma~\ref{lemma:sextuple}, 
${\EH}(n)$ is the cone defined by the inequalities in
Theorem~\ref{th:GOYconj}. Now, $\nlss(n)\subseteq {\EH}(n)$ is immediate from \cite[Theorem~1]{GOY20b}.

($\Leftarrow$) Fix an inequality~\eqref{Ineq:ABC}
associated to $(A,A',B,B',C,C')$ appearing in
Theorem~\ref{th:minlistineq}. We now show the even stronger statement that  
\begin{equation}
\label{eqn:Nnotzero}
N_{\tau(A),\tau(C'),\tau(B),\tau(A'),\tau(C),\tau(B')}\neq 0.
\end{equation}
This would imply that the
inequality appears in Theorem~\ref{th:GOYconj}, completing the proof.

Set $a=|A|, a'=|A'|, b=|B|, b'=|B'|, c=|C|, c'=|C'|$ and $r=|A|+|A'|$.
Let $I\in\Schub(\Gr_\omega(r,2n))$ be associated
to
$(A,A')\in\Schub'(\Gr_\omega(r,2n))$, under (\ref{eq:bijIA}).
Similarly define $J$ and $K$.
By \eqref{eq:tauI02} and the condition \eqref{cond:c1} in
Theorem~\ref{th:minlistineq}, the semigroup property of nonzero
LR-coefficients implies 
\begin{equation}
  \label{eq:81}
  c^{\tau(K)}_{\tau(I)^{\vee[(2n-r)^r]},\tau(J)^{\vee[(2n-r)^r]}}\neq 0.
\end{equation}

Fix a nonnegative integer $k$. Note that $c=a'+b'$ by condition \eqref{cond:sumr} in
Theorem~\ref{th:minlistineq}, and
$c_{(a')^k,(b')^k}^{(c^k)}=1$. 
Using, once more, the semigroup property one gets
\begin{equation}
  \label{eq:82}
  c^{\tau(K)+(c^k)}_{\tau(I)^{\vee[(2n-r)^r]+((a')^k)},\tau(J)^{\vee[(2n-r)^r]}+((b')^k)}> 0.
\end{equation}

Observing Figure~\ref{fig:tauIA},  
\begin{equation}
  \label{eq:83}
  	(\tau(I)^\vee)^t = [\tau(A)^t, \tau(A')^t]_{2n-r}+((a')^{2n-r}).
\end{equation}
Similarly,
\begin{equation}
  \label{eq:84}
  	(\tau(I)^\vee+(a')^k)^t = [\tau(A)^t, \tau(A')^t]_{m}+((a')^{m}),
\end{equation}
and
\begin{equation}
(\tau(K)+(c^k))^t = [\tau(C')^t, \tau(C)^t]_{m}+(c^{m}),
\label{eq:85}
\end{equation}
where $m = 2n-r+k$.  
Now, by \eqref{eq:LRinvtrans}, \eqref{eq:Jun12bbb}, \eqref{eq:84} and
\eqref{eq:85}, conditions~\eqref{eq:82} implies
\begin{equation}
  \label{eq:87}
  c^{[\tau(C')^t, \tau(C)^t]_{m}}_{[\tau(A)^t, \tau(A')^t]_m, [\tau(B)^t, \tau(B')^t]_m}>0.
\end{equation}

On the other hand, for $k$ (and hence $m$) big enough, we can apply
Proposition~\ref{thm:GLbranching} to get
\begin{equation}
  \label{eq:86}
  N_{\tau(A)^t,\tau(C')^t,\tau(B)^t,\tau(A')^t,\tau(C)^t,\tau(B')^t} = c^{[\tau(C')^t,\tau(C)^t]_{m}}_{[\tau(A)^t,\tau(A')^t]_{m},
  [\tau(B)^t,\tau(B')^t]_{m}}.
\end{equation}

Since the six fold NL-coefficient are invariant by conjugating the partitions,
\eqref{eq:87} and \eqref{eq:86} implies \eqref{eqn:Nnotzero} as expected.
\qed

\begin{remark}
The earlier version of this work
(\textsf{arXiv:2107.03152v1}) did not use Proposition~\ref{thm:GLbranching}.
We gave a combinatorial proof, perhaps of independent interest, that
connects the celebrated Robinson-Schensted-Knuth algorithm to the ``demotion'' algorithm of \cite{GOY20a}.
\end{remark}

\section*{Acknowledgements}
We thank Winfried Bruns and the {\sf Normaliz} team. 
SG, GO, and AY were partially supported by NSF RTG grant DMS 1937241. 
SG was partially supported by an NSF graduate research fellowship.
AY was supported by a Simons collaboration grant and UIUC's Center for Advanced Study. 

\bibliographystyle{plain}
\bibliography{nl.bib}

\end{document}